\definecolor{modra3}{rgb}{.1,.0,.4}
\newtheorem{theorem}{Theorem}        
\newtheorem{proposition}[theorem]{Proposition}            
\newtheorem{lemma}[theorem]{Lemma}
\newtheorem{observation}[theorem]{Observation}
\begin{document}

\title{Simple realizability of complete abstract topological graphs simplified\thanks{The author was supported 
by Swiss National Science Foundation Grants 200021-137574 and 200020-144531, by the ESF Eurogiga project GraDR as GA\v{C}R GIG/11/E023, by the grant no.
18-19158S of the Czech Science Foundation (GA\v{C}R), by the grant GAUK 1262213 of the Grant Agency of Charles University, by Charles University project UNCE/SCI/004, and by the PRIMUS/17/SCI/3 project of Charles University. An extended abstract of this paper appeared in the proceedings of Graph Drawing 2015.
} 
} 

\author{
Jan Kyn\v cl
} 

\date{}

\maketitle

\begin{center}
{\footnotesize
Department of Applied Mathematics and Institute for Theoretical Computer Science, \\
Charles University, Faculty of Mathematics and Physics, \\
Malostransk\'e n\'am.~25, 118~00~ Praha 1, Czech Republic; \\
\texttt{kyncl@kam.mff.cuni.cz}
\\\ \\
\'Ecole Polytechnique F\'ed\'erale de Lausanne, Chair of Combinatorial Geometry, \\
EPFL-SB-MATHGEOM-DCG, Station 8, CH-1015 Lausanne, Switzerland\\
}
\end{center}  

\begin{abstract}
An {\em abstract topological graph\/} (briefly an {\em AT-graph}) is a pair $A=(G,\mathcal{X})$ where $G=(V,E)$ is a graph and $\mathcal{X}\subseteq {E \choose 2}$ is a set of pairs of its edges. The AT-graph $A$ is {\em simply realizable\/} if $G$ can be drawn in the plane so that each pair of edges from $\mathcal{X}$ crosses exactly once and no other pair crosses. We show that simply realizable complete AT-graphs are characterized by a finite set of forbidden AT-subgraphs, each with at most six vertices. This implies a straightforward polynomial algorithm for testing simple realizability of complete AT-graphs, which simplifies a previous algorithm by the author. 
We also show an analogous result for independent $\mathbb{Z}_2$-realizability, where only the parity of the number of crossings for each pair of independent edges is specified.
\end{abstract}


\section{Introduction}
A {\em topological graph\/} $T=(V(T), E(T))$ is a drawing of a graph $G$ in the plane such that the vertices of $G$ are represented by a set $V(T)$ of distinct points and the edges of $G$ are represented by a set $E(T)$ of simple curves connecting the corresponding pairs of points. We call the elements of $V(T)$ and $E(T)$ the {\em vertices\/} and the {\em edges\/} of $T$, respectively. The drawing has to satisfy the following general position conditions: (1) the edges pass through no vertices except their endpoints, (2) every pair of edges has only a finite number of intersection points, (3) every intersection point of two edges is either a common endpoint or a proper crossing (``touching'' of the edges is not allowed), and (4) no three edges pass through the same crossing. A topological graph or a drawing is {\em simple\/} if every pair of edges has at most one common point, which is either a common endpoint or a crossing. Simple topological graphs appear naturally as crossing-minimal drawings: it is well 
known that if two edges in a topological graph have more than one common point, then there is a local redrawing that decreases the total number of crossings.
A topological graph is {\em complete\/} if it is a drawing of a complete graph. 

An {\em abstract topological graph\/} (briefly an {\em AT-graph}), a notion introduced by Kratochv\'\i l, Lubiw and Ne\v{s}et\v{r}il~\cite{K91_noncrossing}, 
is a pair $(G,\mathcal{X})$ where $G$ is a graph and $\mathcal{X} \subseteq {E(G) \choose 2}$ is a set of pairs of its edges. Here we assume that $\mathcal{X}$ consists only of independent (that is, nonadjacent) pairs of edges. For a simple topological graph $T$ that is a drawing of $G$, let $\mathcal{X}_T$ be the set of pairs of edges having a common crossing.
A simple topological graph $T$ is a {\em simple realization\/} of $(G,\mathcal{X})$ if $\mathcal{X}_T=\mathcal{X}$. We say that $(G,\mathcal{X})$ is {\em simply realizable} if $(G,\mathcal{X})$ has a simple realization.

An AT-graph $(H,\mathcal{Y})$ is an {\em AT-subgraph} of an AT-graph $(G,\mathcal{X})$ if $H$ is a subgraph of $G$ and $\mathcal{Y}=\mathcal{X}\cap{E(H) \choose 2}$. Clearly, a simple realization of $(G,\mathcal{X})$ restricted to the vertices and edges of $H$ is a simple realization of $(H,\mathcal{Y})$.

We are ready to state our main result.

\begin{theorem}\label{veta_konecna_charakterizace}
Every complete AT-graph that is not simply realizable has an AT-subgraph on at most six vertices that is not simply realizable.
\end{theorem}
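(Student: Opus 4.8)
The plan is to prove the contrapositive by induction on the number of vertices $n$: if $(K_n,\mathcal X)$ is a complete AT-graph all of whose AT-subgraphs on at most six vertices are simply realizable, then $(K_n,\mathcal X)$ itself is simply realizable. For $n\le 6$ there is nothing to prove. For $n\ge 7$, every AT-subgraph on fewer than $n$ vertices again has all of its at most six-vertex AT-subgraphs simply realizable, so by the induction hypothesis each of the $n$ vertex-deleted AT-subgraphs $(K_n-v,\mathcal X_v)$ is simply realizable. The task is then to assemble the combinatorial information carried by these realizations into a single simple realization of $(K_n,\mathcal X)$.

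The main tool is the passage from crossings to rotation systems. I would first record the structural fact --- provable by inspecting the finitely many simple drawings of small complete graphs, and known from the study of simple complete topological graphs --- that the rotation system of a simple realization of a complete AT-graph is determined by $\mathcal X$: for a vertex $v$ and three others $a,b,c$, the clockwise/counterclockwise order of $va,vb,vc$ around $v$ is read off from $\mathcal X$ restricted to a bounded-size vertex set, where the bound six accounts for $v,a,b,c$ together with the extra vertices needed to fix the orientation and to verify that these triple orders are mutually consistent. Using simple realizability of the at most six-vertex AT-subgraphs one checks that the local rotations are well defined and fit together into an honest cyclic order around each vertex, producing a candidate rotation system $\mathcal R$ of $K_n$ whose restriction to any at most six vertices agrees with the rotation system of a simple realization of the corresponding AT-subgraph.

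It then remains to realize $\mathcal R$ by a simple drawing $D$ of $K_n$ and to check $\mathcal X_D=\mathcal X$; the latter follows because the crossing set of a simple drawing is determined by its rotation system (checked on the four- and five-vertex subdrawings, where it equals $\mathcal X$ by construction of $\mathcal R$). Realizing $\mathcal R$ is the crux, and the natural route is to extend the chosen realization of $K_n-v$ by inserting $v$ and routing each edge $vu$ so that it crosses exactly the edges prescribed by $\mathcal X$ and leaves $v$ in the order dictated by $\mathcal R$. The obstructions to such an insertion are local --- they involve $v$ and at most five further vertices --- hence are ruled out by the hypothesis. Turning this into a proof is the main obstacle I anticipate: one must control all $n-1$ new edges simultaneously rather than a few at a time, which I expect to require a careful case analysis of how a star inserts into a simple drawing of $K_6$, combined with a local-to-global (Helly- or LP-type) argument showing that consistency on every bounded subconfiguration forces global consistency. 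The uniqueness statements used to define $\mathcal R$ rest on the same enumeration of simple drawings of $K_5$ and $K_6$.
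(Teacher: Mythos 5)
Your first stage (extracting a candidate rotation system from the five- and six-vertex AT-subgraphs, using the fact that the rotation system of a simple realization is determined by $\mathcal{X}$ up to inversion) matches the paper's Step 1, including the role of six vertices in certifying that the orientations of the five-vertex rotation systems cannot be made mutually consistent. But the heart of the theorem is exactly the part you defer: the claim that the obstructions to inserting the star of a new vertex $v$ into a realization of $K_n-v$ ``are local --- they involve $v$ and at most five further vertices --- hence are ruled out by the hypothesis'' is not a reduction of the problem, it \emph{is} the problem. A realizable-looking rotation system together with realizable small subconfigurations does not obviously yield a global drawing, and no Helly- or LP-type local-to-global principle is known here; indeed, the paper's Section~\ref{section_picture_hanging} exhibits two curves in a punctured sphere whose forced crossings have no bounded-size witness, so any argument of the kind you envisage must exploit the completeness of the graph in an essential, quantitative way, which your sketch does not do.

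The paper closes this gap with machinery your proposal omits entirely: it fixes one star $S(v)$, encodes every other edge by a homotopy class in the punctured plane determined by the order and rotations of its crossings with $S(v)$ (a cyclic obstruction here already gives a six-vertex witness), then takes a crossing-minimal drawing within these homotopy classes and proves, via the Hass--Scott singular/embedded bigon lemmas, parity observations on triangles, a two-case analysis of adjacent edges (Observations~\ref{obs_c_a}--\ref{obs_II_b_c__xw_c_poradi}), and an induction on bigons for independent edges culminating in a planar $K_{3,3}$ contradiction, that any forced extra crossing is witnessed by at most six vertices. Without an argument of comparable substance for this stage --- in particular for controlling all $n-1$ edges of the inserted star simultaneously, or equivalently for bounding the witnesses of forced multiple crossings between two edges --- your proof does not go through; as it stands, the step ``realizing $\mathcal{R}$'' assumes a finite characterization of realizable rotation systems that is itself a consequence of the theorem being proved.
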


We also show that AT-subgraphs with five vertices are not sufficient to characterize simple realizability.

\begin{theorem}\label{veta_protipriklad}
There is a complete AT-graph $A$ with six vertices such that all its complete AT-subgraphs with five vertices are simply realizable, but $A$ itself is not.
\end{theorem}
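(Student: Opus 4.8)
The plan is to exhibit one explicit complete AT-graph $A=(K_6,\mathcal{X})$ and then verify the two required properties by a finite analysis, using the fact that simple drawings of $K_5$ and of $K_6$ fall into only finitely many classes according to their sets of crossing pairs. Concretely, I would fix a labelling $V(K_6)=\{1,\dots,6\}$ and describe $\mathcal{X}$ by an explicit list of independent edge pairs; to keep the verification short I would prefer an $\mathcal{X}$ invariant under a nontrivial subgroup of $S_6$ acting on the labels, ideally one acting transitively on the six vertices, so that the six complete $5$-vertex AT-subgraphs $A-v$ are mutually isomorphic. Finding a suitable $A$ is a search: enumerate the candidate crossing patterns on $K_6$ that satisfy the obvious parity constraints (for every $5$-element subset $S$ of the vertices, $\bigl|\mathcal{X}\cap\binom{E(K_5[S])}{2}\bigr|$ is odd, since every drawing of $K_5$ has an odd number of crossings); keep those that are \emph{not} equal to $\mathcal{X}_T$ for any simple drawing $T$ of $K_6$; and among these keep the ones for which every vertex-deleted restriction \emph{does} occur as $\mathcal{X}_{T'}$ for some simple drawing $T'$ of $K_5$. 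Because the lists of possible crossing patterns of simple drawings of $K_5$ and of $K_6$ are finite and small, this search is finite and certifies that such an $A$ exists.

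Once $A$ is fixed, the easy half is to show that each complete $5$-vertex AT-subgraph $A-v$ is simply realizable: if $A$ has been chosen vertex-transitive this amounts to drawing $K_5$ once and checking that its set of crossing pairs is exactly the one prescribed by $\mathcal{X}$; in general one reduces to one drawing per orbit of vertices. I would record each such drawing explicitly, by a figure or by its rotation system together with its list of crossings, so that the check is routine.

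The main obstacle is proving that $A$ itself is not simply realizable. The cleanest route is to reduce this to rotation systems: using the known fact that the set of crossing pairs of a simple drawing $T$ of $K_n$ is determined by the rotation system of $T$ (any two simple drawings of $K_n$ with the same rotation system differ only by triangle mutations, which change no crossing pair), a simple realization of $A$ would yield a realizable rotation system $\mathcal{R}$ of $K_6$ with crossing-pair set $\mathcal{X}$; it then suffices to enumerate all realizable rotation systems of $K_6$ — finitely many, obtainable for instance by extending realizable rotation systems of $K_5$ one vertex at a time and discarding those ruled out by the known local obstructions — and to verify that none of them produces $\mathcal{X}$. For a more self-contained argument one instead assumes a simple realization $T$ of $A$, restricts to the sub-drawing $T'$ on five of the vertices, uses the short list of possible such $T'$ together with the prescribed crossings of the five edges from the sixth vertex to locate the sixth vertex in a face of $T'$, and derives a contradiction (the forced face is separated from one of the remaining vertices in a way incompatible with $\mathcal{X}$, or the forced rotation at the sixth vertex is not realizable). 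I expect this direct version to require a careful but bounded case distinction over the faces of the relevant $K_5$-drawings; that bookkeeping is the real content of the proof, while the construction and the five-vertex realizations are mostly a matter of presenting the right picture.
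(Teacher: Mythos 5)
There is a genuine gap: your text is a plan for finding and verifying an example, not a proof that one exists. The theorem is an existence statement, and its entire content is the witness together with its verification; you never exhibit a concrete $\mathcal{X}$, never give the five-vertex realizations, and never carry out the non-realizability argument. Saying that a finite search over crossing patterns of $K_6$ ``certifies that such an $A$ exists'' presupposes that the search succeeds, which is exactly what has to be established -- a priori the search could come back empty (as indeed happens one level down: no $5$-vertex example exists, by Theorem~\ref{veta_konecna_charakterizace} applied trivially, and the analogous subtlety is why the six-vertex example is the whole point). Likewise, both of your proposed routes to non-realizability (enumerating all realizable rotation systems of $K_6$, or a case analysis over faces of the $K_5$-subdrawings) are left as ``bounded case distinctions'' that are not performed, so nothing is actually proved.

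For comparison, the paper's proof is short and explicit: it writes down a specific $\mathcal{X}$ on vertex set $\{0,\dots,5\}$, displays simple drawings realizing all six induced five-vertex AT-subgraphs, and then rules out a simple realization $T$ of $A$ by a forced-rotation argument rather than a full enumeration. Using Proposition~\ref{prop_poradi_na_hvezde}, the rotation system of each realizable five-vertex AT-subgraph is determined up to inversion, so the rotation of the single vertex $5$ in $T[\{1,2,3,4,5\}]$, $T[\{0,2,3,4,5\}]$ and $T[\{0,1,3,4,5\}]$ is pinned down (up to inverse) by the exhibited drawings; chaining compatibility of these cyclic orders on overlapping subsets forces contradictory requirements, so no $T$ exists. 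If you want to salvage your approach, you must either produce the explicit $A$ with its five drawings and a completed contradiction argument, or replace the ``search'' by an actual verified enumeration (e.g.\ against the \'Abrego et al.\ database) -- in either case the missing computations are the proof.
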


Theorems~\ref{veta_konecna_charakterizace} and~\ref{veta_protipriklad} are proved in Sections~\ref{section_konecna_charakterizace} and~\ref{section_protipriklad}, respectively.

Theorem~\ref{veta_konecna_charakterizace} implies a straightforward polynomial algorithm for simple realizability of complete AT-graphs, running in time $O(n^{6})$ for graphs with $n$ vertices. It is likely that this running time can be improved relatively easily. However, compared to the first polynomial algorithm for simple realizability of complete AT-graphs~\cite{K11_simple_real}, the new algorithm may be more suitable for implementation and for practical applications, such as generating all simply realizable complete AT-graphs of given size or computing the crossing number of the complete graph~\cite{Ch11_facets, M08_recent}. On the other hand,  the new algorithm does not directly provide the drawing itself, unlike the original algorithm~\cite{K11_simple_real}. The explicit list of realizable AT-graphs on six vertices can be generated using the database of small simple complete topological graphs created by \'Abrego et al.~\cite{Aetal15_allgood}.

For general noncomplete graphs, no such finite characterization by forbidden AT-subgraphs is possible. Indeed, in the special case when $\mathcal{X}$ is empty, the problem of simple realizability is equivalent to planarity, and there are minimal nonplanar graphs of arbitrarily large girth; for example, subdivisions of $K_5$. Moreover, simple realizability for general AT-graphs is NP-complete, even when the underlying graph is a matching~\cite{K91_stringII,KM89_np}. See~\cite{K11_simple_real} for an overview of other similar realizability problems.

The proof of Theorem~\ref{veta_konecna_charakterizace} is based on the polynomial algorithm for simple realizability of complete AT-graphs from~\cite{K11_simple_real}. The main idea is very simple: every time the algorithm rejects the input, it is due to an obstruction of constant size.

Theorem~\ref{veta_konecna_charakterizace} is an analogue of a similar characterization of simple monotone drawings of $K_n$ by forbidden $5$-tuples, and pseudolinear drawings of $K_n$ by forbidden $4$-tuples~\cite{BFK15_monotone}.

It is known that the set of pairs of crossing edges in a simple complete topological graph determines its rotation system, up to inversion~\cite{G05_complete,K11_simple_real}, and vice versa~\cite{K09_enumeration,PT04_how}; see Proposition~\ref{prop_poradi_na_hvezde} for an extension of this correspondence and Section~\ref{sec_preliminaries} for the definitions.
\'Abrego et al.~\cite{Aetal15_allgood,A14_pers} independently verified that simple complete topological graphs with up to nine vertices can be characterized by forbidden rotation systems of five-vertex subgraphs. They conjectured that the same characterization is true for all simple complete topological graphs~\cite{A14_pers}. 
This conjecture now follows by combining their result for six-vertex graphs with Theorem~\ref{veta_konecna_charakterizace}.
This gives a finite characterization of \emph{realizable abstract rotation systems} defined in~\cite[Sect. 3.5]{K13_improved}, where it was also stated that such a characterization was not likely~\cite[p. 739]{K13_improved}. The fact that only $5$-tuples are sufficient for the characterization by rotation systems should perhaps not be too surprising, as rotation systems characterize simple drawings of $K_n$ more economically, using only $O(n^2\log n)$ bits, whereas AT-graphs need $\Theta(n^4)$ bits.

\subsubsection*{Independent $\mathbb{Z}_2$-realizability}
A topological graph $T$ is an {\em independent $\mathbb{Z}_2$-realization\/} of an AT-graph $(G,\mathcal{X})$ if $\mathcal{X}$ is the set of pairs of independent edges that cross an odd number of times in $T$. We say that $(G,\mathcal{X})$ is {\em independently $\mathbb{Z}_2$-realizable} if $(G,\mathcal{X})$ has an independent $\mathbb{Z}_2$-realization. 

Clearly, every simple realization of an AT-graph is also its independent $\mathbb{Z}_2$-realization. The converse is not true, since every simple realization of $K_4$ has at most one crossing, but there are independently $\mathbb{Z}_2$-realizable AT-graphs $(K_4,\mathcal{X})$ with $|\mathcal{X}|=2$ or $|\mathcal{X}|=3$. 
%
Thus, independent $\mathbb{Z}_2$-realizability is only a necessary condition for simple realizability. However, independent $\mathbb{Z}_2$-realizability of arbitrary AT-graphs can be tested in polynomial time since it is equivalent to the solvability of a system of linear equations over $\mathbb{Z}_2$; see Section~\ref{section_Z2} for details.

Independent $\mathbb{Z}_2$-realizability has been usually considered only in the special case when $\mathcal{X}=\emptyset$. In this case, for every graph $G$, the AT-graph $(G,\emptyset)$ has an independent $\mathbb{Z}_2$-realization if and only if $(G,\emptyset)$ has a simple realization, and this is equivalent to $G$ being planar. This fact is known as the (strong) Hanani--Tutte theorem~\cite{Ha34_uber,Tutte70_toward}. A related concept, the \emph{independent odd crossing number} of a graph $G$, $\text{iocr}(G)$, measuring the minimum cardinality of $\mathcal{X}$ for which $(G,\mathcal{X})$ has an independent $\mathbb{Z}_2$-realization, has been introduced by Sz\'ekely~\cite{Sze04_iocr}. The asymptotic value of $\text{iocr}(K_n)$ is not known, and computing $\text{iocr}(G)$ for a general graph $G$ is NP-complete~\cite{{PSS11_rotation}}. See Schaefer's survey~\cite{Sch14_survey} for more information.

We call an AT-graph $(G,\mathcal{X})$ \emph{even} (or an \emph{even $G$}) if $|\mathcal{X}|$ is even, and \emph{odd} (or an \emph{odd $G$}) if $|\mathcal{X}|$ is odd.
The following theorem is an analogue of Theorem~\ref{veta_konecna_charakterizace} for independent $\mathbb{Z}_2$-realizability.

\begin{theorem}\label{veta_Z2_charakterizace}
Every complete AT-graph that is not independently $\mathbb{Z}_2$-realizable has an AT-subgraph on at most six vertices that is not independently $\mathbb{Z}_2$-realizable. More precisely, a complete AT-graph is independently $\mathbb{Z}_2$-realizable if and only if it contains no even $K_5$ and no odd $2K_3$ as an AT-subgraph.
\end{theorem}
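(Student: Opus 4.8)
The plan is to establish the ``More precisely'' equivalence; the first assertion of the theorem then follows at once, since an even $K_5$ has five vertices and an odd $2K_3$ has six. One direction is an easy parity argument. In any drawing of $K_5$ obeying the general position conditions, moving one edge over one vertex flips the crossing parity of exactly two independent pairs of edges, while isotopies flip none; since $K_5$ has a drawing with a single crossing, every drawing of $K_5$ has an odd number of independent pairs crossing an odd number of times. Likewise, in any drawing of $2K_3$ (two vertex-disjoint triangles) moving an edge over a vertex flips zero or two independent pairs, and the disjoint drawing has no crossing, so that number is always even. Hence an even $K_5$ and an odd $2K_3$ are not independently $\mathbb{Z}_2$-realizable, and neither is any complete AT-graph containing one of them as an AT-subgraph, because restricting a realization would realize the subgraph.

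For the converse I would use the linear-algebraic description from Section~\ref{section_Z2}. Fix a reference drawing $D_0$ of $K_n$, let $I$ be the set of independent pairs of edges, let $\mathbf{c}_0\in\mathbb{Z}_2^{I}$ record which pairs cross oddly in $D_0$, and let $W\subseteq\mathbb{Z}_2^{I}$ be spanned by the vectors $m_{e,v}$ recording the effect of moving edge $e$ over vertex $v$ (so the support of $m_{ab,w}$ is $\{\,\{ab,wd\}:d\notin\{a,b,w\}\,\}$); then $(K_n,\mathcal{X})$ is independently $\mathbb{Z}_2$-realizable iff $\mathbf{1}_\mathcal{X}\in\mathbf{c}_0+W$. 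Since $\langle\mathbf{c},m_{ab,w}\rangle$ is the degree of $w$ in the ``link graph'' $g_{ab}$ of $\mathbf{c}$ (the graph on $V\setminus\{a,b\}$ with $cd\in g_{ab}$ iff $\{ab,cd\}\in\operatorname{supp}\mathbf{c}$), we get $\mathbf{c}\in W^{\perp}$ iff every link graph of $\mathbf{c}$ is even. In particular the homogeneous ``$K_5$-functional'' $\phi_Q$ ($Q$ a $5$-set, supported on the independent pairs inside $Q$) and the ``$2K_3$-functional'' $\psi_{T_1,T_2}$ ($T_1,T_2$ disjoint triples, supported on the cross pairs between them) both lie in $W^{\perp}$, since each of their link graphs is a triangle or empty; and restricting $D_0$ together with the two parity invariants above gives $\langle\phi_Q,\mathbf{c}_0\rangle=1$ and $\langle\psi_{T_1,T_2},\mathbf{c}_0\rangle=0$. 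Therefore ``$\mathcal{X}$ contains no even $K_5$ and no odd $2K_3$'' is exactly the statement that $\mathbf{1}_\mathcal{X}-\mathbf{c}_0$ is orthogonal to every $\phi_Q$ and every $\psi_{T_1,T_2}$, i.e.\ $\mathbf{1}_\mathcal{X}-\mathbf{c}_0\in\Phi^{\perp}$ with $\Phi:=\langle\phi_Q,\psi_{T_1,T_2}\rangle$. As $\Phi\subseteq W^{\perp}$ and the standard $\mathbb{Z}_2$-bilinear form on $\mathbb{Z}_2^{I}$ is nondegenerate, the whole theorem reduces to proving $W^{\perp}=\Phi$.

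To prove $W^{\perp}(K_n)\subseteq\Phi(K_n)$ I would induct on $n$, the cases $n\le 4$ being trivial. Given $\mathbf{c}\in W^{\perp}(K_n)$, I would peel off a vertex $v$: the link graphs $g_{vd}$ with $d\ne v$ are even graphs on $V\setminus\{v,d\}$, and evenness of $g_{ab}$ at the vertex $v$ says that $\sum_{d}g_{vd}(ab)=0$ for every pair $\{a,b\}\subseteq V\setminus\{v\}$. The key claim is that every such family is a $\mathbb{Z}_2$-combination of the ``$v$-parts'' of the configurations $\phi_Q$ with $v\in Q$ and $\psi_{T_1,T_2}$ with $v$ lying in one of the triples. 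Granting this, subtracting from $\mathbf{c}$ the corresponding combination of $\phi_Q$'s and $\psi_{T_1,T_2}$'s (all of which lie in $\Phi(K_n)$) yields a vector still in $W^{\perp}(K_n)$ but supported on pairs avoiding $v$, hence lying in $W^{\perp}(K_{n-1})\subseteq\Phi(K_{n-1})\subseteq\Phi(K_n)$ by induction, so $\mathbf{c}\in\Phi(K_n)$. The key claim is a self-contained statement about $\mathbb{Z}_2$-valued functions on the ``rooted triples'' of an $(n-1)$-element set (the value at a rooted triple $(d,\{a,b\})$ being $g_{vd}(ab)$) in which every vertex-link is an even graph and every pair is covered an even number of times; I would prove it by a further induction, first eliminating the link of one fixed root using $\phi_Q$-type configurations and then disposing of the residual array — whose rows and columns all sum to zero — using $\psi_{T_1,T_2}$-type configurations.

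The main obstacle is precisely this last combinatorial claim, and in particular its case $n=6$, where the $2K_3$ obstruction first appears and is genuinely needed (paralleling the six-vertex phenomenon of Theorem~\ref{veta_protipriklad}): each configuration $\phi_Q$ or $\psi_{T_1,T_2}$ perturbs several link graphs simultaneously, so the cancellations in the peeling step cannot be performed one link graph at a time and must be organised carefully. If a clean inductive argument for this proves elusive, the $n=6$ base case can alternatively be checked directly — for instance against the database of small simple complete topological graphs of \'Abrego et al.~\cite{Aetal15_allgood} — after which the induction above carries the result to all $n$.
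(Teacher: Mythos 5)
Your necessity argument and your linear-algebraic reduction are sound, and they are essentially the dual of the paper's setup: with $W$ the span of the switch vectors ${\bf w}_{(e,v)}$ and $\Phi$ the span of the $K_5$-functionals $\phi_Q$ and the $2K_3$-functionals $\psi_{T_1,T_2}$, the theorem is indeed equivalent (via $W^{\perp\perp}=W$ and $\Phi^{\perp\perp}=\Phi$) to the equality $W^{\perp}=\Phi$, which is exactly the paper's statement $U=W$ read through orthogonal complements (the paper's $U$ is your $\Phi^{\perp}$). The genuine gap is that you never prove this equality. All of the content of the sufficiency direction sits in your ``key claim'' -- that any family of link graphs $g_{vd}$ satisfying the evenness conditions at a vertex $v$ is a combination of the $v$-parts of configurations $\phi_Q$ and $\psi_{T_1,T_2}$ -- and you yourself identify it as the main obstacle, offering only a one-sentence plan (eliminate the link of one fixed root, then dispose of a residual array). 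That is precisely where the real work lies: each $\phi_Q$ and each $\psi_{T_1,T_2}$ perturbs several link graphs simultaneously, so one must exhibit a schedule of eliminations that does not destroy what has already been cleared. The paper does exactly this, in the primal picture, by an explicit two-phase clearing procedure: first the star of $v_0$ is cleared using single switches ${\bf w}_{(f,v_i)}$, then the star of $v_1$ is cleared using the combined vectors ${\bf y}_{i,j}$, where the $2K_3$-property forces each offending link graph to be complete bipartite (Observation~\ref{obs_complete_bip}) and the $K_5$-property forces the already-treated vertices into one side of the bipartition, after which a final application of the $2K_3$-property shows the residue is zero. Some argument of this kind is indispensable, and your proposal does not contain it.

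Your fallback does not close the gap either. Verifying the six-vertex case of the theorem is not a base case for your induction: the peeling step itself invokes the key claim for every $n$, so a check at $n=6$ does not ``carry the result to all $n$.'' Moreover, the database of \'Abrego et al.~\cite{Aetal15_allgood} enumerates simple drawings and hence decides simple realizability, not independent $\mathbb{Z}_2$-realizability (already for $K_4$ the two notions differ); a finite verification for $n=6$ would instead amount to comparing $\dim W$ with $\dim\Phi$, or enumerating the cosets of $W$, which is a different computation from the one you cite.
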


Theorem~\ref{veta_Z2_charakterizace} is proved in Section~\ref{section_Z2}, where we also show that AT-subgraphs with five vertices are not sufficient to characterize independently $\mathbb{Z}_2$-realizable complete AT-graphs. 

Theorem~\ref{veta_Z2_charakterizace} again implies a straightforward algorithm for independent $\mathbb{Z}_2$-realizability of complete AT-graphs, running in time $O(n^{6})$ for graphs with $n$ vertices. This is slightly better compared to the algebraic algorithm, which essentially consists of solving a system of $\Theta(n^4)$ equations with $\Theta(n^3)$ variables; see Section~\ref{section_Z2}. The algorithms by Ibarra, Moran and Hui~\cite{IMH82_matrix} and Jeannerod~\cite{Jea06_LSP_decomp} solve such a system in time $O(n^{3\omega+1})$, where $O(n^{\omega})$ is the complexity of multiplication of two square $n\times n$ matrices. The best current algorithms for matrix multiplication give $\omega<2.3729$~\cite{LeGall14_powers,Williams12_faster}.

\section{Preliminaries}
\label{sec_preliminaries}

For a topological graph $T$ and a subset $U\subseteq V(T)$, by $T[U]$ we denote the topological subgraph of $T$ induced by $U$. Analogously, 
for an AT-graph $A=(G,\mathcal{X})$ and a subset $U\subseteq V(G)$, by $A[U]$ we denote the induced AT-subgraph $(G[U], \mathcal{X}\cap {E(G[U])\choose 2})$.

A {\em face\/} of a topological graph $T$ is a connected component of the set $\mathbb R^2 \setminus \bigcup E(T)$. 

Simple topological graphs $G$ and $H$ are {\em weakly isomorphic\/} if they are simple realizations of the same abstract topological graph. Topological graphs $G$ and $H$ are {\em isomorphic\/} if and only if there exists a homeomorphism of the sphere that transforms $G$ into $H$.

The {\em rotation\/} of a vertex $v$ in a topological graph is the clockwise cyclic order in which the edges incident with $v$ leave the vertex $v$. The {\em rotation system\/} of a topological graph is the set of rotations of all its vertices. Similarly we define the {\em rotation\/} of a crossing $x$ of edges $uv$ and $yz$ as the clockwise order in which the four parts $xu$, $xv$, $xy$ and $xz$ of the edges $uv$ and $yz$ leave the point $x$. Note that each crossing has exactly two possible rotations.
We will represent the rotation of a vertex $v$ as an ordered sequence of the other endpoints of the edges incident with $v$. Similarly, we will represent the rotation of a crossing $x$ as an ordered sequence of the four endpoints of the edges incident with $x$.

The {\em extended rotation system\/} of a topological graph is the set of rotations of all its vertices and crossings. 

Assuming that $T$ and $T'$ are drawings of the same abstract graph, we say that their rotation systems are {\em inverse\/} if for each vertex $v \in V(T)$, the rotation of $v$ and the rotation of the corresponding vertex $v' \in V(T')$ are inverse cyclic permutations. If $T$ and $T'$ are weakly isomorphic simple topological graphs, we say that their extended rotation systems are {\em inverse\/} if their rotation systems are inverse and, in addition, for every crossing $x$ in $T$, the rotation of $x$ and the rotation of the corresponding crossing $x'$ in $T'$ are inverse cyclic permutations. For example, if $T'$ is a mirror image of $T$, then $T$ and $T'$ have inverse extended rotation systems.

Simple complete topological graphs have the following key property.

\begin{proposition}\label{prop_poradi_na_hvezde}{\rm\cite{G05_complete,K11_simple_real}}
\begin{itemize}
\item[$(1)$]
If two simple complete topological graphs are weakly isomorphic, then their extended rotation systems are either the same or inverse.
\item[$(2)$] For every edge $e$ of a simple complete topological graph $T$ and for every pair of edges $f,f'\in E(T)$ that have a common endpoint and cross $e$, the AT-graph of $T$ determines the order of crossings of $e$ with the edges $f,f'$.
\end{itemize}
\end{proposition}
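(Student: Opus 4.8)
The plan is to prove both parts by exploiting the fact that a simple complete topological graph is highly constrained: the induced subdrawing on any four vertices is a simple drawing of $K_4$, of which there are only very few (up to weak isomorphism), and in each of them every crossing pair and every vertex rotation is forced once we know the AT-graph. So the whole statement reduces to a finite check on five-vertex and four-vertex configurations.

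First I would establish part $(1)$ locally. Take a simple complete topological graph $T$ with the AT-graph $A=(G,\mathcal{X})$. For each vertex $v$ and each triple of other vertices $a,b,c$, the induced drawing $T[\{v,a,b,c\}]$ is a simple drawing of $K_4$; inspecting the (constant-size) list of such drawings, one checks that the cyclic order in which $va,vb,vc$ leave $v$ is determined by which of the three pairs among $\{ab,ac,bc\}$ and $\{va,\dots\}$ cross — i.e.\ by $\mathcal{X}$ restricted to these four vertices — up to a global reflection that flips all three-element suborders simultaneously. Since all these local orders at $v$ must fit together into one cyclic permutation of all neighbours of $v$ (they are pairwise compatible in the sense defined above), this pins down the rotation of $v$ up to reversal, and the reversal is coherent across all vertices because $K_5$-subdrawings already link the choices at different vertices. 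The same four-vertex analysis gives the rotation of every crossing: a crossing of $uv$ and $yz$ lives inside $T[\{u,v,y,z\}]$, a simple $K_4$-drawing with exactly that one crossing, and there its rotation is determined up to the same global flip. Hence the extended rotation system is determined by $A$ up to a simultaneous inversion of everything, which is exactly statement $(1)$.

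For part $(2)$, fix an edge $e=uv$ and two edges $f=yz$, $f'=yw$ sharing the endpoint $y$, both crossing $e$. Consider the induced subdrawing on the five vertices $\{u,v,y,z,w\}$. By part $(1)$ its extended rotation system is determined by the AT-graph up to global inversion; in particular the rotation at $y$ and the rotations of the two crossings $e\cap f$ and $e\cap f'$ are determined up to that single common flip. Now the order in which $e$ meets $f$ and $f'$ as one travels from $u$ to $v$ can be read off from this data: walking along $e$ from $u$, the first crossing encountered is the one whose rotation, combined with the rotation at $y$ (which tells us on which side of $e$ the vertex $y$ sits relative to each strand), places $y$ ``behind''; more concretely, the arcs $ye\cap f$ and $ye\cap f'$ together with the segment of $e$ between the two crossings bound a region whose structure, hence the crossing order, is forced. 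The key point is that a global inversion of the extended rotation system reverses the rotation at $y$, reverses both crossing rotations, and also reverses the direction of travel along $e$ — all simultaneously — so the \emph{relative} order of the two crossings along $e$ is invariant under the one ambiguity left by part $(1)$. Therefore it is determined by the AT-graph alone.

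The main obstacle is the bookkeeping in the last step: one must verify carefully that the combinatorial recipe extracting the crossing order from the rotation at $y$ and the two crossing rotations is genuinely invariant under the global inversion, and that it does not secretly depend on further data (such as the rotation at $u$ or $v$, or on which side of $e$ the vertices $z,w$ lie). I expect this to come down to checking the handful of simple $K_4$- and $K_5$-drawings explicitly, which is finite but needs care to organize. Part $(1)$ is comparatively routine once the compatibility-of-rotations observation is in hand. I would cite~\cite{G05_complete,K11_simple_real} for the statement and present the above as the structure of the argument.
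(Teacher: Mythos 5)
This proposition is not proved in the paper at all: it is imported from Gioan and from \cite{K11_simple_real} (Proposition~3 there), so there is no in-paper argument to match your sketch against; judged on its own, your sketch has a genuine gap in part (1). The $K_4$ analysis does determine, for each $4$-tuple containing $v$, the cyclic order of the three corresponding neighbours of $v$ up to reversal, but these reversals are independent for different $4$-tuples, and the pairwise compatibility you invoke is vacuous: two cyclic orders of triples sharing only two elements are \emph{always} compatible, since a cyclic order on at most two elements carries no information. Knowing every triple only up to an independent flip does not pin down the rotation of $v$ up to reversal; for instance, $(a,b,c,d)$ and $(a,b,d,c)$ induce the same four triples up to reversal yet are neither equal nor inverse. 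This is exactly why the gluing machinery in Section~\ref{section_konecna_charakterizace} (Steps 1b and 1c, Lemma~\ref{lemma_comp_rotation}) works with rotation systems of $5$-tuples, first making the per-$5$-tuple reversals globally consistent via the orientability argument and then gluing cyclic orders of $k$-element sets along overlaps of size $k-1\ge 3$ --- and even that machinery takes as its base case the $n=5$ instance of the present proposition, which needs the classification of simple drawings of $K_5$, not just of $K_4$. Your one sentence ``the reversal is coherent across all vertices because $K_5$-subdrawings already link the choices'' is the actual content of the theorem, not a routine remark.

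For part (2), the recipe of reading the order of the two crossings along $e$ from the rotation at $y$ together with the two crossing rotations is not substantiated, and the invariance argument is off: inverting the extended rotation system corresponds to taking a mirror image, which does not reverse the linear order of crossing points along $e$, so ``reverses the direction of travel along $e$'' is not a property of the inversion. More importantly, that this local data determines the crossing order is essentially the statement to be proved, not something that follows formally from part (1). Your fallback --- an explicit finite check over simple drawings of $K_5$ --- is a legitimate route, but it requires enumerating all drawings of $K_5$ up to homeomorphism realizing each five-vertex AT-graph (not merely the list of realizable AT-graphs), which the sketch does not supply; this is what the cited proofs in \cite{G05_complete,K11_simple_real} accomplish by other means.
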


By inspecting simple drawings of $K_4$, it can be shown that the converse of Proposition~\ref{prop_poradi_na_hvezde} also holds: the rotation system of a simple complete topological graph determines which pairs of edges cross~\cite{K09_enumeration,PT04_how}.

We say that two cyclic permutations of sets $A,B$ are {\em compatible} if they are restrictions of a common cyclic permutation of $A\cup B$.

\section{Proof of Theorem~\ref{veta_protipriklad}}
\label{section_protipriklad}
We use the shortcut $ij$ to denote the edge $\{i,j\}$. Let  $A=((V,E),\mathcal{X})$ be the complete AT-graph with vertex set $V=\{0,1,2,3,4,5\}$ and with 
\begin{align*}
\mathcal{X}=\{&\{02,13\},\{02,14\},\{02,15\},\{02,35\},\{03,14\},\{03,15\},\{03,24\},\{04,15\},\\
 &\{04,25\},\{04,35\},\{13,24\},\{24,35\},\{35,14\},\{14,25\},\{25,13\}\}.
\end{align*}

Every complete AT-subgraph of $A$ with five vertices is simply realizable; see Figure~\ref{obr_2_realizations}.

Now we show that $A$ is not simply realizable. Suppose that $T$ is a simple realization of $A$. Without loss of generality, assume that the rotation of $5$ in $T[\{1,2,3,5\}]$ is $(1,2,3)$.  By Proposition~\ref{prop_poradi_na_hvezde} and by the first drawing in Figure~\ref{obr_2_realizations}, the rotation of $5$ in $T[\{1,2,3,4,5\}]$ is $(1,2,3,4)$, since the inverse would not be compatible with $(1,2,3)$. Similarly, by the second drawing in Figure~\ref{obr_2_realizations} the rotation of $5$ in $T[\{0,2,3,4,5\}]$ is $(2,3,0,4)$, since the inverse would not be compatible with $(1,2,3,4)$. By the third drawing in Figure~\ref{obr_2_realizations}, the rotation of $5$ in $T[\{0,1,3,4,5\}]$ is $(0,1,3,4)$ or $(0,4,3,1)$, but neither of them is compatible with both $(1,2,3,4)$ and $(2,3,0,4)$; a contradiction.

\begin{figure}
\begin{center}
\epsfbox{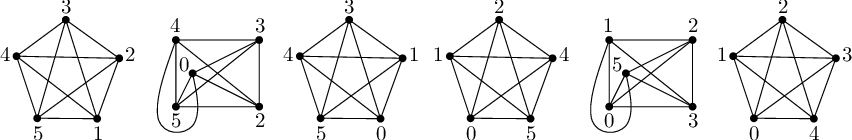}
\end{center}
\caption{Simple realizations of all six complete subgraphs of $A$ with five vertices.}
\label{obr_2_realizations}
\end{figure}


\section{Proof of Theorem~\ref{veta_konecna_charakterizace}}
\label{section_konecna_charakterizace}
Let $A=(K_n,\mathcal{X})$ be a given complete abstract topological graph with vertex set $[n]=\{1,2,\dots,n\}$.
The algorithm from~\cite{K11_simple_real} for deciding simple realizability of $A$ has the following three main steps: computing the rotation system, determining the homotopy class of every edge with respect to the edges incident with one chosen vertex $v$, and computing the number of crossings of every pair of edges in a crossing-optimal drawing
with the rotation system and homotopy class fixed from the previous steps.
We follow the algorithm and analyze each step in detail.


\subsection*{Step 1: computing the extended rotation system}
This step is based on the proof of Proposition~\ref{prop_poradi_na_hvezde}; see~\cite[Proposition 3]{K11_simple_real}.


\subsubsection*{1a) Realizability of $5$-tuples}
For every $5$-tuple $Q$ of vertices of $A$, the algorithm tests whether $A[Q]$ is simply realizable. If not, then the $5$-tuple certifies that $A$ is not simply realizable. If $A[Q]$ is simply realizable, then by Proposition~\ref{prop_poradi_na_hvezde}, the algorithm computes a rotation system $\mathcal{R}(Q)$ such that the rotation system of every simple realization of $A[Q]$ is either $\mathcal{R}(Q)$ or the inverse of $\mathcal{R}(Q)$.


\subsubsection*{1b) Orienting $5$-tuples} 

An \emph{orientation map} $\Phi$ is a map assigning to every rotation system $\mathcal{R}(Q)$ with $Q\in {[n] \choose 5}$ either $\mathcal{R}(Q)$ itself or its inverse $(\mathcal{R}(Q))^{-1}$.

The algorithm selects an orientation map $\Phi$ such that for every pair of $5$-tuples $Q,Q'\in {[n] \choose 5}$ having four common vertices and for each $x\in Q\cap Q'$, the rotations of $x$ in $\Phi(\mathcal{R}(Q))$ and $\Phi(\mathcal{R}(Q'))$ are compatible. If there is no such $\Phi$, the AT-graph $A$ is not simply realizable. We show that in this case there is a set $S$ of six vertices of $A$ that certifies this.

Let $Q_1,Q_2$ be two $5$-tuples with four common elements, let $\mathcal{R}_1$ be a rotation system on $Q_1$ and let $\mathcal{R}_2$ be a rotation system on $Q_2$. We say that $\mathcal{R}_1$ and $\mathcal{R}_2$ are \emph{compatible} if for every $x\in Q_1\cap Q_2$, the rotations of $x$ in $\mathcal{R}_1$ and $\mathcal{R}_2$ are compatible. 

Let $\mathcal{G}$ be the graph with vertex set ${[n]\choose 5}$ and edge set consisting of those pairs $\{Q,Q'\}$ whose intersection has size $4$. For every edge $\{Q,Q'\}$ of $\mathcal{G}$, at most one orientation of $\mathcal{R}(Q')$ is compatible with $\mathcal{R}(Q)$. If no orientation of $\mathcal{R}(Q')$ is compatible with $\mathcal{R}(Q)$, then the $6$-tuple $S=Q\cup Q'$ certifies that $A$ is not simply realizable. We may thus assume that for every edge $\{Q,Q'\}$ of $\mathcal{G}$, exactly one orientation of $\mathcal{R}(Q')$ is compatible with $\mathcal{R}(Q)$. Let $\mathcal{E}$ be the set of those edges $\{Q,Q'\}$ of $\mathcal{G}$ such that $\mathcal{R}(Q)$ and $\mathcal{R}(Q')$ are not compatible.

Call a set $\mathcal{W} \subseteq {[n]\choose 5}$ \emph{orientable} if there is an orientation map $\Phi$ such that for every pair of $5$-tuples $Q,Q' \in \mathcal{W}$ with $|Q\cap Q'|=4$, the rotation systems $\Phi(\mathcal{R}(Q))$ and $\Phi(\mathcal{R}(Q'))$ are compatible. 

\begin{lemma}
\label{lemma_orientable}
If ${[n]\choose 5}$ is not orientable, then there is a $6$-tuple $S \subseteq [n]$ such that ${S\choose 5}$ is not orientable.
\end{lemma}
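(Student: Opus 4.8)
The plan is to reduce the orientability question to a consistency condition for a $\mathbb{Z}_2$-labeling of the edges of the graph $\mathcal{G}$ restricted to ${[n]\choose 5}$, and then to argue that inconsistency is already witnessed inside some ${S\choose 5}$ with $|S|=6$. First I would set up the following encoding. Fix an arbitrary reference orientation of each $\mathcal{R}(Q)$; then an orientation map $\Phi$ is the same thing as a function $\varphi:{[n]\choose 5}\to\mathbb{Z}_2$, where $\varphi(Q)=0$ means we keep $\mathcal{R}(Q)$ and $\varphi(Q)=1$ means we take its inverse. For every edge $\{Q,Q'\}$ of $\mathcal{G}$ we know, by the paragraph preceding the lemma, that exactly one of the two orientations of $\mathcal{R}(Q')$ is compatible with $\mathcal{R}(Q)$; equivalently there is a fixed bit $c(Q,Q')\in\mathbb{Z}_2$ (namely $1$ if $\{Q,Q'\}\in\mathcal{E}$, i.e.\ the reference orientations are incompatible, and $0$ otherwise) such that $\Phi(\mathcal{R}(Q))$ and $\Phi(\mathcal{R}(Q'))$ are compatible if and only if $\varphi(Q)+\varphi(Q')=c(Q,Q')$. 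Thus ${[n]\choose 5}$ is orientable precisely when the system of $\mathbb{Z}_2$-equations $\{\varphi(Q)+\varphi(Q')=c(Q,Q'):\{Q,Q'\}\in E(\mathcal{G})\}$ is solvable, which by a standard fact is equivalent to every cycle of $\mathcal{G}$ being \emph{balanced}, meaning the sum of $c$ over its edges is $0$.

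Next I would show that the graph $\mathcal{G}$ on ${[n]\choose 5}$ is ``locally enough'' connected that an unbalanced cycle forces an unbalanced cycle living inside the $5$-subsets of some $6$-set. The key observation is that if $Q,Q'\in{[n]\choose 5}$ satisfy $|Q\cap Q'|=4$, then $Q\cup Q'$ is a $6$-set and both $Q,Q'$ lie in ${Q\cup Q'\choose 5}$, which has exactly six elements forming a copy of $\mathcal{G}$ on them; moreover this induced subgraph is connected (in fact, it is easy to check it is isomorphic to the octahedron $K_{2,2,2}$: two $5$-subsets of a $6$-set are adjacent iff the two ``missing'' vertices are distinct). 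So every edge of $\mathcal{G}$ sits in such a $6$-vertex ``gadget.'' The plan is then to prove that the cycle space of $\mathcal{G}$ is generated by cycles each contained in one such gadget ${S\choose 5}$; once this is established, an unbalanced cycle in $\mathcal{G}$ implies that at least one generating gadget cycle is unbalanced, i.e.\ ${S\choose 5}$ is not orientable, as desired. To generate the cycle space I would fix a convenient spanning structure of $\mathcal{G}$ and, for each non-tree edge $\{Q,Q'\}$, express the fundamental cycle as a $\mathbb{Z}_2$-sum of gadget cycles by walking from $Q$ to $Q'$ through a sequence of $6$-sets, changing one vertex of the current $5$-set at a time; the triangle/quadrilateral relations inside consecutive octahedra telescope to give the fundamental cycle.

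The main obstacle I expect is precisely this last combinatorial bookkeeping: showing cleanly that one can get from any $5$-set to any other via a walk in $\mathcal{G}$ decomposable into short ``moves'' whose closing cycles all lie inside single $6$-sets, and that the $\mathbb{Z}_2$-sum of these gadget cycles equals the chosen fundamental cycle. A clean way to handle it is to note that $\mathcal{G}$ is connected and that it suffices to handle the case $|Q\cap Q'|\le 3$ (the case $|Q\cap Q'|=4$ being a single edge, trivially inside one gadget): if $|Q\cap Q'|=k<4$, pick $v\in Q\setminus Q'$ and $w\in Q'\setminus Q$, and route through $Q''=(Q\setminus\{v\})\cup\{w\}$, which shares $4$ vertices with $Q$; the closing relation between the $Q$–$Q''$ edge and a path of such edges lies in successive octahedra on $6$-sets of the form $Q\cup\{w\}$, $Q''\cup\{v'\}$, etc. Iterating increases $|Q\cap Q'|$ by one each time while only ever invoking relations internal to single $6$-element vertex sets. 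Since the whole argument is about the solvability of a $\mathbb{Z}_2$ system and its local certificates, no continuity or drawing-theoretic input beyond Proposition \ref{prop_poradi_na_hvezde} is needed here; the topology has already been packaged into the bits $c(Q,Q')$.
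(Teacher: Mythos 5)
Your reduction to a $\mathbb{Z}_2$-consistency problem (orientable if and only if every cycle of $\mathcal{G}$ is balanced with respect to the bits $c(Q,Q')$) matches the paper's first step, but the key claim your plan rests on --- that the cycle space of $\mathcal{G}$ is generated by cycles lying inside single $6$-set gadgets ${S\choose 5}$ --- is false for $n\ge 7$. Note first that the induced subgraph on ${S\choose 5}$ is the complete graph $K_6$, not the octahedron (any two distinct $5$-subsets of a $6$-set meet in $4$ elements), and that each edge $\{Q,Q'\}$ of $\mathcal{G}$ lies in exactly one gadget, namely the one for $S=Q\cup Q'$, so the gadgets are pairwise edge-disjoint. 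Now take three $5$-tuples sharing a common $4$-set, say $Q_i=T\cup\{a_i\}$ with $|T|=4$ and $a_1,a_2,a_3$ distinct. They form a triangle of $\mathcal{G}$ whose three edges lie in three different gadgets. Any sum of gadget cycles, restricted to the edges of the gadget of $Q_1\cup Q_2$, is an element of the cycle space of that $K_6$ (even degree at every vertex), whereas the triangle restricted to that gadget is a single edge, with odd degrees at $Q_1$ and $Q_2$; hence this triangle is not a sum of gadget cycles. (A dimension count for $n=7$ says the same: the gadget cycles span only $7\cdot 10=70$ dimensions, while the cycle space of $\mathcal{G}$ has dimension $105-21+1=85$.) Consequently your telescoping scheme cannot produce all fundamental cycles, and, worse, the statement is not true at the level of abstract edge-labelings: since the gadget cycles span a proper subspace, there are labelings $c$ balanced on every gadget cycle yet unbalanced on some cycle of $\mathcal{G}$. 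So your closing remark that ``the topology has already been packaged into the bits $c(Q,Q')$'' is exactly where the argument loses the information it needs.

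The paper's proof avoids this by generating the cycle space of $\mathcal{G}$ by \emph{all} triangles (via a shortest-cycle-not-a-sum-of-triangles argument), including the triangles $Q_1Q_2Q_3$ with $|Q_1\cap Q_2\cap Q_3|=4$ that span seven points, and then disposing of the latter by returning to the rotation systems themselves: if $\{u,v,w,z\}=Q_1\cap Q_2\cap Q_3$, one can orient $\mathcal{R}(Q_1),\mathcal{R}(Q_2),\mathcal{R}(Q_3)$ so that the rotation of $u$ in each is compatible with $(v,w,z)$, and then the three oriented rotation systems are pairwise compatible, so such triangles are always balanced. Only after this step does a nonorientable triangle force $|Q_1\cap Q_2\cap Q_3|=3$, i.e.\ $|Q_1\cup Q_2\cup Q_3|=6$. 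To repair your proposal you would need to add an argument of this kind (some input about the rotation systems beyond the bits $c(Q,Q')$) handling the seven-point triangles.
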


\begin{proof}

Clearly, ${[n]\choose 5}$ is not orientable if and only if $\mathcal{G}$ has a cycle with an odd number of edges from $\mathcal{E}$. Call such a cycle a \emph{nonorientable} cycle.
We claim that if $\mathcal{G}$ has a nonorientable cycle, then $\mathcal{G}$ has a nonorientable triangle. 
Let $\mathcal{C}(\mathcal{G})$ be the cycle space of $\mathcal{G}$.
The parity of the number of edges of $\mathcal{E}$ in $\mathcal{K}\in\mathcal{C}(\mathcal{G})$ is a linear form on $\mathcal{C}(\mathcal{G})$. Hence, to prove our claim, it is sufficient to show that $\mathcal{C}(\mathcal{G})$ is generated by triangles.

Suppose that $\mathcal{K}=F_1F_2\dots F_k$, with $k\ge 4$, is a shortest cycle in $\mathcal{G}$ that is not a sum of triangles in $\mathcal{C}(\mathcal{G})$. Then $\mathcal{K}$ is an induced cycle in $\mathcal{G}$, that is, $|F_i \cap F_j|\le 3$ if $2 \le |i-j| \le k-2$. Let $z\in F_1 \setminus F_2$. Then $z\in F_k$, otherwise $|F_k\cap F_1\cap F_2|=4$. Let $i$ be the smallest index such that $i\ge 3$ and $z\in F_i$. We have $i\ge 4$, otherwise $|F_1\cap F_3|=|(F_1\cap F_2\cap F_3)\cup \{z\}|=4$. For every $j\in\{2,\dots,i-2\}$, let $F'_j=(F_j\cap F_{j+1}) \cup \{z\}$. Then $\mathcal{K}$ is the sum of the closed walk $\mathcal{K}'=F_1F'_2\dots F'_{i-2}F_i \dots F_k$ and the triangles $F_1F_2F'_2,\allowbreak F_{i-1}F_iF'_{i-2}$, $F_jF_{j+1}F'_j$ for $j=2, \dots, i-2$, and $F_{j+1}F'_jF'_{j+1}$ for $j=2,\dots, i-3$; see Figure~\ref{obr_4_1_triangulace}. Since the length of $\mathcal{K}'$ is $k-1$, we have a contradiction with the choice of $\mathcal{K}$.

\begin{figure}
\begin{center}
\epsfbox{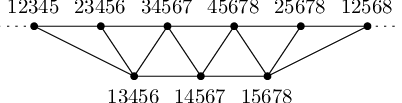}
\end{center}
\caption{Triangulating a cycle in $\mathcal{G}$. The vertices in the first row represent the vertices $F_1, \dots, F_i$ of the original cycle $\mathcal{K}$, the vertices in the second row represent the vertices $F'_2, \dots, F'_{i-2}$.}
\label{obr_4_1_triangulace}
\end{figure}

Let $Q_1Q_2Q_3$ be a nonorientable triangle in $\mathcal{G}$. The $5$-tuples $Q_1,Q_2,Q_3$ have either three or four common elements. Suppose that $|Q_1\cap Q_2\cap Q_3|=4$ and let $\{u,v,w,z\}=Q_1\cap Q_2\cap Q_3$. Then we may orient the rotation systems $\mathcal{R}(Q_1), \mathcal{R}(Q_2)$ and $\mathcal{R}(Q_3)$ as $\mathcal{R}_1$, $\mathcal{R}_2$ and $\mathcal{R}_3$, respectively, so that the rotation of $u$ in each $\mathcal{R}_i$ is compatible with $(v,w,z)$. This implies that the rotations of $u$ in $\mathcal{R}_1$, $\mathcal{R}_2$ and $\mathcal{R}_3$ are pairwise compatible. Moreover, for $i\neq j$, the rotations of $u$ in $\mathcal{R}_i$ and $(\mathcal{R}_j)^{-1}$ are not compatible. By our assumption that the rotation system $\mathcal{R}(Q_i)$ is compatible with $\mathcal{R}(Q_j)$ or its inverse, the rotation systems $\mathcal{R}_1$, $\mathcal{R}_2$ and $\mathcal{R}_3$ are pairwise compatible, a contradiction. Hence, we have $|Q_1\cap Q_2\cap Q_3|=3$, which implies that $|Q_1\cup Q_2\cup Q_3|=6$. Setting  $S=Q_1\cup Q_2\cup Q_3$, the set ${S\choose 5}$ is not orientable.
\end{proof}

If ${[n]\choose 5}$ is orientable, there are exactly two possible solutions for the orientation map. We will assume that the rotation of $1$ in $\Phi(\mathcal{R}(\{1,2,3,4,5\}))$ is compatible with $(2,3,4)$, so that there is at most one solution $\Phi$.


\subsubsection*{1c) Computing the rotations of vertices}
Having oriented the rotation system of every $5$-tuple, the algorithm now computes the rotation of every $x\in[n]$, as the cyclic permutation compatible with the rotation of $x$ in every $\Phi(\mathcal{R}(Q))$ such that $x\in Q \in {[n]\choose 5}$. We show that this is always possible.

\begin{lemma}
\label{lemma_comp_rotation}
Let $k\ge 4$. For every $F\in {[k+1]\choose k}$, let $\pi_F$ be a cyclic permutation of $F$ such that for every pair $F,F'\in {[k+1]\choose k}$, the cyclic permutations $\pi_F$ and $\pi_F'$ are compatible. Then there is a cyclic permutation $\pi_{[k+1]}$ of $[k+1]$ compatible with all the cyclic permutations $\pi_F$ with $F\in {[k+1]\choose k}$.
\end{lemma}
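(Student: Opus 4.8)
The plan is to reconstruct the full cyclic permutation $\pi_{[k+1]}$ by induction on $k$, or rather, to reduce the general case to the smallest case $k=4$ by a deletion argument. Write $[k+1]=\{1,2,\dots,k+1\}$ and for $i\in[k+1]$ let $F_i=[k+1]\setminus\{i\}$, so the hypothesis gives us $k+1$ cyclic permutations $\pi_{F_1},\dots,\pi_{F_{k+1}}$ that are pairwise compatible. The key observation is that for $k\ge 5$, any $k$ of these permutations already live on $k$ sets whose union is $[k+1]$, and any two of them overlap in a set of size $k-1\ge 4$; so it would be natural to first build the answer from just the pair $\pi_{F_1},\pi_{F_2}$, which overlap in the $(k-1)$-set $\{3,4,\dots,k+1\}$. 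Since $|\{3,\dots,k+1\}|=k-1\ge 3$, a common cyclic permutation of $F_1\cup F_2=[k+1]$ restricting to both $\pi_{F_1}$ and $\pi_{F_2}$ exists and is \emph{unique}: $\pi_{F_1}$ tells us where to insert the element $1$ relative to $\{3,\dots,k+1\}$, $\pi_{F_2}$ tells us where to insert $2$, and these two insertions do not interfere because neither $1$ nor $2$ lies in the common part. Call this unique cyclic permutation $\pi$.

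It then remains to verify that $\pi$ is compatible with every other $\pi_{F_j}$, $j\ge 3$. Here I would argue as follows: $\pi$ restricted to $F_j=[k+1]\setminus\{j\}$ and $\pi_{F_j}$ are two cyclic permutations of the same $k$-set, so it suffices to show they agree. Both restrict to $\pi_{F_1}$ on $F_1\cap F_j$ (a set of size $k-1\ge 4$, using $\pi_{F_1},\pi_{F_j}$ compatible on one side and the construction of $\pi$ on the other) and both restrict to $\pi_{F_2}$ on $F_2\cap F_j$. For $k\ge 4$ these two overlaps $F_1\cap F_j$ and $F_2\cap F_j$ together cover $F_j$ and meet in a set of size $k-2\ge 2$; a cyclic permutation of a set $X$ is determined by its restrictions to two subsets that cover $X$ and overlap in at least two elements. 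Hence $\pi|_{F_j}=\pi_{F_j}$, as desired, and $\pi_{[k+1]}:=\pi$ works.

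The main obstacle is the bookkeeping in the last paragraph: one must be careful that "a cyclic permutation is determined by two overlapping restrictions" really does need the overlap to have size at least $2$ (size $1$ is not enough), which is exactly where the hypothesis $k\ge 4$ is used, and one should double-check the degenerate small cases $k=4$ (where $F_1\cap F_j$ has size $3$ and the argument is tightest). An alternative, perhaps cleaner, route avoids singling out $F_1,F_2$: observe that compatibility of the $\pi_F$ means that the "betweenness" relation they induce on $[k+1]$ is globally consistent, and then invoke the standard fact that a consistent system of local cyclic orders on the $k$-subsets of a $(k+1)$-set, $k\ge 4$, extends to a cyclic order on the whole set — but since the paper seems to favor explicit constructions, I would present the insertion argument above. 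Either way, the uniqueness of the extension (which the subsequent text of Step 1c implicitly relies on) falls out of the same reasoning.
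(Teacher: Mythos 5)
There is a genuine gap, and it sits exactly where the paper's proof does its real work. Your $\pi$ built from $\pi_{F_1}$ and $\pi_{F_2}$ is not unique: $\pi_{F_1}$ places the element $2$ in some gap of the common restriction $\tau$ to $\{3,\dots,k+1\}$, and $\pi_{F_2}$ places $1$ in some gap of $\tau$ (note the indices: $F_1$ misses $1$, so it locates $2$, and vice versa), but if these are the \emph{same} gap — equivalently, if $1$ and $2$ are adjacent in the sought-for global order — the relative order of $1$ and $2$ is undetermined and there are two common extensions. Concretely, if every $\pi_F$ is the restriction of $(1,2,3,\dots,k+1)$, then both $(1,2,3,\dots,k+1)$ and $(2,1,3,\dots,k+1)$ restrict to $\pi_{F_1}$ and $\pi_{F_2}$, yet only the former is compatible with $\pi_{F_3}$. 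The same example refutes the auxiliary claim your verification step relies on: a cyclic permutation of $X$ is \emph{not} determined by its restrictions to two subsets that cover $X$ and overlap in at least two elements (no overlap size suffices). Take $X=F_j$ with the subsets $F_j\setminus\{1\}$ and $F_j\setminus\{2\}$: the cyclic orders $(1,2,c_1,\dots,c_{k-2})$ and $(2,1,c_1,\dots,c_{k-2})$ have identical restrictions to both subsets but are distinct. So with the wrong choice of $\pi$ the conclusion $\pi|_{F_j}=\pi_{F_j}$ simply fails, and nothing in your argument tells you how to make the right choice; the alternative route you sketch (``a consistent system of local cyclic orders extends'') is just a restatement of the lemma, not a proof.

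The missing ingredient is a disambiguation step using a third $k$-set, and that is precisely the content of the paper's proof: it fixes $\pi_{[k]}=(1,2,\dots,k)$, so that only the single element $k+1$ remains to be inserted; a given $\pi_{F_i}$ pins down the gap for $k+1$ in $(1,\dots,k)$ unless $k+1$ is placed next to the missing element $i$, and the paper shows (using $k\ge 4$ and the compatibility of $\pi_{F_1}$ with $\pi_{F_3}$) that these two cannot both be ambiguous; once the gap is fixed, all remaining $\pi_{F_j}$ are forced and compatible with $(1,\dots,i,k+1,i+1,\dots,k)$. Your plan can be repaired in the same spirit — when $1$ and $2$ fall into the same gap of $\tau$, consult some $\pi_{F_j}$ with $j\ge 3$ to fix their relative order, then verify compatibility with all the others — but as written, both the uniqueness claim and the ``two overlapping restrictions determine the cycle'' claim are false, so the proof does not go through.
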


\begin{proof}
We may assume that $\pi_{[k]}=(1,2,\dots,k)$. For $i\in [k]$, let $F_i=[k+1]\setminus \{i\}$. Since $F_i\cap [k]=[k]\setminus \{i\}$, the cyclic permutation $\pi_{F_i}$ is compatible with $(1,2,\dots, i-i,i+1,\dots,k)$. The only freedom is thus in the position of the element $k+1$. Since $\pi_{F_1}$ and $\pi_{F_3}$ are compatible and $k\ge 4$, it is not possible that $\pi_{F_1}=(k+1,2,3,4,\dots,k)$ and $\pi_{F_3}=(1,2,k+1,4,\dots,k)$ simultaneously. Hence, in at least one of the cyclic permutations $\pi_{F_1}, \pi_{F_3}$, the position of $k+1$ is between two adjacent elements $i,i+1$ of the cycle $(1,2,\dots,k)$ (counting mod $k$). From the compatibility condition, all the other permutations $\pi_j$ are now uniquely determined and they are all compatible with the cyclic permutation $(1,2,\dots,i,k+1,i+1,\dots,k)$.
\end{proof}

Let $x\in [n]$. For every $F\in {[n]\setminus \{x\} \choose 4}$, let $\pi_F$ be the rotation of $x$ in $\Phi(\mathcal{R}(F\cup \{x\}))$. For every pair $F,F'\in {[n]\setminus \{x\} \choose 4}$ with $|F\cap F'|=3$, the cyclic permutations $\pi_F$ and $\pi_{F'}$ are compatible.
Let $k\ge 4$ and suppose that for every $F\in {[n]\setminus \{x\} \choose k}$, we have a cyclic permutation $\pi_F$ of $F$ such that for every pair $F,F'\in {[n]\setminus \{x\} \choose k}$ with $|F\cap F'|=k-1$, the cyclic permutations $\pi_F$ and $\pi_{F'}$ are compatible. By Lemma~\ref{lemma_comp_rotation}, for every $Q\in {[n]\setminus \{x\} \choose k+1}$ there is a cyclic permutation $\pi_Q$ compatible with every $\pi_F$ such that $F\in{Q\choose k}$. Moreover, for every pair $Q,Q'\in {[n]\setminus \{x\} \choose k+1}$ with $|Q\cap Q'|=k$, the cyclic permutations $\pi_Q$ and $\pi_{Q'}$ are compatible since they are both compatible with $\pi_{Q\cap Q'}$. By induction, there is a cyclic permutation of $[n]\setminus \{x\}$ that is compatible with every $\pi_F$ such that $F\in {[n]\setminus \{x\} \choose 4}$. This cyclic permutation is the rotation of $x$.


\subsubsection*{1d) Computing the rotations of crossings}
For every pair of edges $\{\{u,v\},\allowbreak \{x,y\}\}\in \mathcal{X}$, the algorithm determines the rotation of their crossing from the rotations of the vertices $u,v,x,y$. This finishes the computation of the extended rotation system.


\subsection*{Step 2: determining the homotopy classes of the edges}
Let $v$ be a fixed vertex of $A$ and let $S(v)$ be a topological star consisting of $v$ and all the edges incident with $v$, drawn in the plane so that the rotation of $v$ agrees with the rotation computed in the previous step. For every edge $e=xy$ of $A$ not incident with $v$, the algorithm computes the order of crossings of $e$ with the subset $E_{v,e}$ of edges of $S(v)$ that $e$ has to cross. By Proposition~\ref{prop_poradi_na_hvezde} (2), the five-vertex AT-subgraphs of $A$ determine the relative order of crossings of $e$ with every pair of edges of $E_{v,e}$. 
Define a binary relation $\prec_{x,y}$ on $E_{v,e}$ so that $vu \prec_{x,y} vw$ if the crossing of $e$ with $vu$ is closer to $x$ than the crossing of $e$ with $vw$. If $\prec_{x,y}$ is acyclic, it defines a total order of crossings of $e$ with the edges of $E_{v,e}$. If $\prec_{x,y}$ has a cycle, then it also has an oriented triangle $vu_1,vu_2,vu_3$. This means that the AT-subgraph of $A$ induced by the six vertices $v,u_1,u_2,u_3,x,y$ is not simply realizable. We can thus assume that $\prec_{x,y}$ is a strict total order.

We recall that the \emph{homotopy class} of a curve $\varphi$ in a surface $\Sigma$ relative to the boundary of $\Sigma$ is the set of all curves that can be obtained from $\varphi$ by a homotopy (that is, a continuous deformation) within $\Sigma$, keeping the boundary of $\Sigma$ fixed pointwise.

We define the \emph{homotopy class of $e$} using the following combinatorial data: the rotation of $v$, the set $E_{v,e}$, the total order $\prec_{x,y}$ in which the edges of $E_{v,e}$ cross $e$, the rotations of these crossings, and the rotations of the vertices $x$ and $y$. Consider the star $S(v)$ drawn on the sphere. Cut circular holes around the points representing all the vertices except $v$, and let $\Sigma$ be the resulting surface with boundary. Let $x_e$ and $y_e$ be fixed points on the boundaries of the two holes around $x$ and $y$, respectively, so that the orders of these points corresponding to all the edges of $A$ on the boundaries of the holes agree with the computed rotation system. Draw a curve $\varphi_e$ with endpoints $x_e$ and $y_e$ satisfying all the combinatorial data of $e$. Note that such a curve $\varphi_e$ always exists, but it may have to cross itself. It is also easy to see that any other curve $\varphi'_e$ drawn using the same combinatorial data is homotopic to $\varphi_e$: slightly informally, this follows from the fact that the edges of $S(v)$ together with the boundary components of $\Sigma$ form a connected subset of $\Sigma$, so the closed curve formed by the portions of $\varphi_e$ and $\varphi'_e$ between two consecutive crosings with edges $vu$ and $vw$ of $S(v)$, and by portions of the edges $vu$ and $vw$, cannot split the set of boundary components of $\Sigma$ in a nontrivial way, and hence the portion of $\varphi'_e$ can be continuously deformed onto the portion of $\varphi_e$ while keeping their endpoints on $vu$ and $vw$. Now the homotopy class of $e$ is defined as the homotopy class of $\varphi_e$ in $\Sigma$ relative to the boundary of $\Sigma$, and this is well-defined by the observation just made.


\subsection*{Step 3: computing the minimum crossing numbers}
For every pair of edges $e,f$, let ${\rm cr}(e,f)$ be the minimum possible number of crossings of two curves from the homotopy classes of $e$ and $f$. Similarly, let ${\rm cr}(e)$ be the minimum possible number of self-crossings of a curve from the homotopy class of $e$. The numbers ${\rm cr}(e,f)$ and ${\rm cr}(e)$ can be computed in polynomial time in any $2$-dimensional surface with boundary~\cite{AGR11_self,SSS08_geometric}. 
In our special case, the algorithm is relatively straightforward~\cite{K11_simple_real}.

We use the key fact that from the homotopy class of every edge, it is possible to choose a representative such that the crossing numbers ${\rm cr}(e,f)$ and ${\rm cr}(e)$ are all realized simultaneously~\cite{K11_simple_real}. This is a consequence of the following facts.

\begin{lemma}{\rm\cite{HS85_bigons}}
\label{lemma_excess1}
Let $\gamma$ be a curve on an orientable surface $S$ with endpoints on the boundary of $S$ that has more self-intersections than required by its homotopy class. Then there is a singular $1$-gon or a singular $2$-gon bounded by parts of $\gamma$. 
\end{lemma}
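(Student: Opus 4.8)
The statement is a form of the classical \emph{bigon criterion}, and I would prove its contrapositive: \emph{if no singular $1$-gon and no singular $2$-gon is bounded by sub-arcs of $\gamma$, then $\gamma$ realises the minimum number of self-intersections in its homotopy class rel~$\partial S$.} Write $n(c)$ for the number of self-intersections of a curve $c$.

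First I would reduce to $S$ compact (replace $S$ by a compact subsurface containing $\gamma$ together with any competing homotopy) and equip $S$ with a hyperbolic metric having geodesic boundary; the only orientable surfaces with nonempty boundary this excludes are the disk and the annulus, where the lemma is elementary --- in a disk every sub-loop of $\gamma$ is null-homotopic and hence a singular $1$-gon, and the annulus is handled the same way with a flat metric. Pass to the universal cover $\pi\colon\tilde S\to S$, a convex subregion of $\mathbb{H}^2$ bounded by geodesics on which the deck group $\Gamma$ acts freely, and let $g$ be the unique geodesic arc homotopic to $\gamma$ rel~$\partial S$. The bookkeeping device is: if all sub-loops of a curve $c$ are essential --- true for $\gamma$ under the no-$1$-gon hypothesis, since a null-homotopic sub-loop bounds a singular $1$-gon --- then every lift $\tilde c$ is embedded and $n(c)=\tfrac12\sum_{\beta\in\Gamma\setminus\{1\}}|\tilde c\cap\beta\tilde c|$, each self-crossing of $c$ contributing one point to $\tilde c\cap\beta\tilde c$ and one to $\tilde c\cap\beta^{-1}\tilde c$ for the deck transformation $\beta$ relating its two lifted branches.

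Then I would prove two claims. \emph{(A)} For every $\beta$ the arcs $\tilde g$ and $\beta\tilde g$ are distinct geodesics, hence cross at most once, and exactly once precisely when their endpoints alternate on $\partial\tilde S$; since any arc $\delta$ homotopic to $g$ rel~$\partial S$ has $\tilde\delta$ and $\beta\tilde\delta$ sharing the endpoints of $\tilde g$ and $\beta\tilde g$, the same alternation forces $|\tilde\delta\cap\beta\tilde\delta|\ge|\tilde g\cap\beta\tilde g|$ term by term, so $n(\delta)\ge n(g)$ and $g$ is minimal. \emph{(B)} Suppose $\tilde\gamma$ met some $\beta\tilde\gamma$ in two points $p,q$ consecutive along $\tilde\gamma$; then the sub-arc $a\subset\tilde\gamma$ and the sub-arc $b\subset\beta\tilde\gamma$ from $p$ to $q$ meet only in $\{p,q\}$, so $a\cup b$ is an embedded circle in the disk $\tilde S$ and bounds a disk $D$ there; the image $\pi(D)$ is a singular disk whose boundary is the union of the two sub-arcs $\pi(a)$ and $\pi(b)$ of $\gamma$, meeting transversally at the self-intersection points $\pi(p),\pi(q)$ --- that is, a singular $2$-gon, or a singular $1$-gon if $\pi(a)=\pi(b)$ --- contradicting the hypothesis. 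Hence $|\tilde\gamma\cap\beta\tilde\gamma|\le1$ for every $\beta$; together with the term-by-term inequality from (A) this gives $|\tilde\gamma\cap\beta\tilde\gamma|=|\tilde g\cap\beta\tilde g|$ for all $\beta$, so $n(\gamma)=n(g)$ is minimal, which is the contrapositive of the lemma.

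The step I expect to be most delicate is the boundary bookkeeping: the lifted arcs terminate on the geodesic part of $\partial\tilde S$, so ``endpoints alternate'' has to be read off the circular order on $\partial\tilde S$; one must check that a homotopy rel~$\partial S$ genuinely fixes the lifted endpoints, and invoke the standard but fiddly fact that two arcs in a disk with linked endpoints cross an odd number of times. A smaller point needing care is that $D$ in step (B) is genuinely embedded and that $\pi(D)$ has honest non-straight corners at $\pi(p),\pi(q)$, so that it qualifies as a singular $1$- or $2$-gon in the sense of the statement. Orientability enters only in producing a hyperbolic universal cover with a free $\Gamma$-action; the criterion is known to have exceptions on some non-orientable surfaces. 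This is in essence the proof of Hass and Scott~\cite{HS85_bigons}, who instead carry out the reduction via curve shortening.
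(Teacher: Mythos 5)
The paper does not prove this lemma at all --- it is quoted from Hass--Scott \cite{HS85_bigons} --- so your sketch has to stand on its own. Its overall architecture is the standard one and matches the cited source in spirit (hyperbolic metric, universal cover, comparison with the geodesic representative, the parity/linking criterion for lifted arcs); claim (A), the reduction to essential sub-loops via the no-$1$-gon hypothesis, and the final bookkeeping $n(\gamma)=\tfrac12\sum_{\beta\neq 1}|\tilde\gamma\cap\beta\tilde\gamma|$ are all fine. (Minor point: the 1985 paper cited here argues with geodesics and combinatorial arguments in the cover; curve shortening is their later work.)

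The genuine gap is in claim (B). Writing $p=\tilde\gamma(s_1)=\beta\tilde\gamma(t_1)$ and $q=\tilde\gamma(s_2)=\beta\tilde\gamma(t_2)$, your choice of $p,q$ \emph{consecutive along $\tilde\gamma$} controls only the parameters $s_1,s_2$; it gives no control over $t_1,t_2$, which a priori may lie inside $(s_1,s_2)$ or straddle it (equivalently, $\beta^{-1}p$ or $\beta^{-1}q$ may sit on the arc $a$, or the arc $\beta^{-1}b\subset\tilde\gamma$ may contain $a$). In those cases the projected disk $\pi(D)$ is bounded by two sub-paths of $\gamma$ whose parameter intervals overlap, and this is \emph{not} a singular $2$-gon in the sense used here, which requires two \emph{disjoint} intervals $\alpha,\beta\subset[0,1]$. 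Your escape hatch ``or a singular $1$-gon if $\pi(a)=\pi(b)$'' does not cover this: exact coincidence $\pi(a)=\pi(b)$ would force $\tilde\gamma(s_1)=\beta\tilde\gamma(s_2)$ and $\tilde\gamma(s_2)=\beta\tilde\gamma(s_1)$, hence $\beta^2=1$, impossible in the torsion-free deck group, whereas partial overlap of the intervals is not so easily excluded. One can try to repair this by a smarter choice (e.g.\ minimizing $s_2-s_1$ over all $\beta$ and all pairs on a common translate kills the case $t_1,t_2\in(s_1,s_2)$, but not the interleaved or nested-outside configurations), and ruling out the remaining configurations --- or extracting a legitimate $1$- or $2$-gon from them --- is exactly the nontrivial part of Hass and Scott's argument. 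As written, the step ``hence $|\tilde\gamma\cap\beta\tilde\gamma|\le 1$ for every $\beta$'' is not justified, while the delicate points you do flag (boundary bookkeeping, corners of $D$) are comparatively routine.
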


Here a \emph{singular $1$-gon} of a curve $\gamma:[0,1]\rightarrow S$ is an image $\gamma[\alpha]$ of an interval $\alpha\subset [0,1]$ such that $\gamma$ identifies the endpoints of $\alpha$ and the resulting loop is contractible in $S$. A \emph{singular $2$-gon} of $\gamma$ is an image of two disjoint intervals $\alpha, \beta \subset [0,1]$ such that $\gamma$ identifies the endpoints of $\alpha$ with the endpoints of $\beta$ and the resulting loop is contractible in $S$.

\begin{lemma}{\rm\cite{FT_homeomorphisms,HS85_bigons}}
\label{lemma_excess2}
Let $C_1$ and $C_2$ be two simple curves on a surface $S$ such that the endpoints of $C_1$ and $C_2$ lie on the boundary of $S$. If $C_1$ and $C_2$ have more intersections than required by their homotopy classes, then there is an innermost embedded $2$-gon between $C_1$ and $C_2$, that is, two subarcs of $C_1$ and $C_2$ bounding a disc in $S$ whose interior is disjoint from $C_1$ and $C_2$. 
\end{lemma}

Whenever there is a singular $1$-gon, a singular $2$-gon, or an innermost embedded $2$-gon in a system of curves on $S$, it is possible to eliminate the $1$-gon or $2$-gon by a homotopy of the corresponding curves, which decreases the total number of crossings.  

For the rest of the proof, we fix a drawing $D$ of $A$ such that its rotation system is the same as the rotation system computed in Step 1, the edges of $S(v)$ do not cross each other, every other edge is drawn as a curve in its homotopy class computed in Step 2, and under these conditions, the total number of crossings is the minimum possible. Then every edge $f$ of $S(v)$ crosses every other edge $e$ at most once, and this happens exactly if $\{e,f\}\in\mathcal{X}$. Moreover, for every pair of edges $e_1, e_2$ not incident with $v$, the corresponding curves in $D$ cross exactly ${\rm cr}(e_1,e_2)$ times, and the curve representing $e_1$ has ${\rm cr}(e_1)$ self-crossings. Hence, $A$ is simply realizable if and only if all the edges $e_1,e_2$ not incident with $v$ satisfy ${\rm cr}(e_1)=0$, ${\rm cr}(e_1,e_2)\le 1$, and ${\rm cr}(e_1,e_2)=1 \Leftrightarrow \{e_1,e_2\}\in\mathcal{X}$. Moreover, if $A$ is simply realizable, then $D$ is a simple realization of $A$.


\subsubsection*{3a) Characterization of the homotopy classes}

Let $w_1, w_2, \dots, w_{n-1}$ be the vertices of $A$ adjacent to $v$ and assume that the rotation of $v$ is $(w_1,w_2, \dots, w_{n-1})$. Let $w_aw_b$ be an edge such that $1\le a<b\le n-1$. Since every AT-subgraph of $A$ with four or five vertices is simply realizable, and the rotation of $v$ in every such simple realization or its mirror image is compatible with $(w_1,w_2, \dots, w_{n-1})$, we have the following conditions on the homotopy class of $w_aw_b$. 

\begin{observation}
\label{obs_zleva_zprava} 
Assume that $\{w_aw_b,vw_c\}\in\mathcal{X}$; that is, $vw_c\in E_{v,w_aw_b}$. If $a<c<b$, then the rotation of the crossing of $w_aw_b$ with $vw_c$ is $(w_a,w_c,w_b,v)$. If $c<a$ or $b<c$, then the rotation of the crossing is $(w_b,w_c,w_a,v)$.
\end{observation}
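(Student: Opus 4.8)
The statement concerns the rotation of a single crossing, so by Proposition~\ref{prop_poradi_na_hvezde}~(2) it is determined by a four- or five-vertex AT-subgraph of $A$, namely the one induced by $\{v,w_a,w_b,w_c\}$, which is simply realizable by hypothesis. The plan is therefore to reduce entirely to $K_4$: fix any simple drawing of the complete AT-graph on $\{v,w_a,w_b,w_c\}$ and read off the rotation of the unique crossing between $w_aw_b$ and $vw_c$ directly. The only data that matter are (i)~that the pair $\{w_aw_b,vw_c\}$ lies in $\mathcal{X}$, so the two edges cross exactly once, and (ii)~the rotation of $v$, which is the restriction of $(w_1,\dots,w_{n-1})$ to $\{w_a,w_b,w_c\}$, hence $(w_a,w_c,w_b)$ when $a<c<b$ and $(w_a,w_b,w_c)$ when $c<a$ or $b<c$ (these two cyclic orders are inverse to each other).

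The key step is the following elementary fact about a single crossing in a drawing of $K_4$ on vertices $v,w_a,w_b,w_c$ in which the edge $w_aw_b$ crosses the edge $vw_c$ once: the rotation of $v$ together with the cyclic order in which the three curves $vw_c$, $w_aw_b$ emanate from the crossing point are linked. Concretely, walk along $w_aw_b$ from $w_a$ to $w_b$; at the crossing point $x$, the two stubs $xv$ and $xw_c$ of the edge $vw_c$ appear on opposite sides, and which of $w_a$, $w_b$ comes first after $v$ (clockwise around $x$) is forced by whether $w_c$ is ``between'' $w_a$ and $w_b$ in the rotation of $v$ or not. I would verify this by simply drawing the two simple drawings of $K_4$ with exactly this one crossing (they are mirror images of each other, consistent with Proposition~\ref{prop_poradi_na_hvezde}~(1)), observing that in both the rotation of the crossing is $(w_a,w_c,w_b,v)$ precisely in the ``between'' case $a<c<b$, and its inverse $(w_a,v,w_b,w_c)=(w_b,w_c,w_a,v)$ in the ``outside'' case; taking the mirror image swaps the roles of the two drawings but does not change which case we are in, since it simultaneously inverts the rotation of $v$, so the conclusion is drawing-independent. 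Figure~\ref{obr_4_2_typicka_hrana} presumably displays exactly this picture.

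I do not expect any real obstacle here; the statement is essentially a normalization convention extracted from the $K_4$ case, and the ``proof'' is a case check on two pictures. The only point requiring a line of care is confirming that the choice is independent of which simple realization of $A[\{v,w_a,w_b,w_c\}]$ we pick, which is exactly Proposition~\ref{prop_poradi_na_hvezde}~(2) (the AT-graph determines the order, hence in particular the rotation, of the crossing of $vw_c$ with the single edge $w_aw_b$ crossing it); alternatively one invokes part~(1) and the mirror-image remark above. Hence the observation follows, and I would present it with the two-case picture and a one-sentence appeal to Proposition~\ref{prop_poradi_na_hvezde}.
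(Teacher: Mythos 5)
Your proposal is correct and matches the paper's treatment: the paper states this as an unproved observation, justified only by the remark that every four- or five-vertex AT-subgraph is simply realizable and by the picture in Figure~\ref{obr_4_2_typicka_hrana}, i.e., exactly the $K_4$-inspection you describe. Your extra care about mirror images (fixing the rotation of $v$ to the restriction of $(w_1,\dots,w_{n-1})$ so that Proposition~\ref{prop_poradi_na_hvezde} pins down the crossing rotation independently of the chosen realization) is precisely the point that makes the observation well defined, and it is consistent with how the paper uses it.
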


\begin{figure}
\begin{center}
\epsfbox{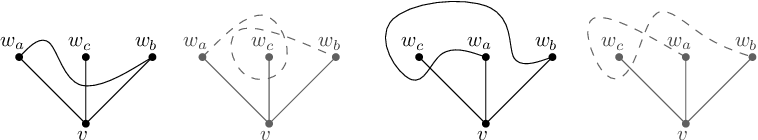}
\end{center}
\caption{Illustration to Observation~\ref{obs_zleva_zprava}. The rotation of the crossing of the edges $w_aw_b$ and $vw_c$ is determined by the rotation of $v$. The inverse rotation would force a self-crossing of the edge $w_aw_b$.}
\label{obr_4_2a_rotace_krizeni}
\end{figure}

\begin{proof}
Consider a simple realization of the complete AT-graph $A[\{v,w_a,w_b,w_c\}]$ where the rotation of $v$ is $(w_a,w_c,w_b)$ and $v$ is incident with the outer face of the drawing; see Figure~\ref{obr_4_2a_rotace_krizeni}, left. The triangle $vw_aw_b$ is then drawn as a simple closed curve, passing through the vertices $v,w_a,w_b$ in clockwise order. The portion of the curve representing the edge $vw_c$ between $v$ and the crossing with $w_aw_b$ then lies inside the triangle $vw_aw_b$, and this forces the rotation of the crossing to be $(w_a,w_c,w_b,v)$. The case when the rotation of $v$ is $(w_a,w_b,w_c)$ is symmetric; see Figure~\ref{obr_4_2a_rotace_krizeni}, right.
\end{proof}

Observation~\ref{obs_zleva_zprava} implies that the homotopy class of the edge $w_aw_b$ is determined by the total order $\prec_{w_a,w_b}$ in which $w_aw_b$ crosses the edges in $E_{v,w_aw_b}$. The next observation further restricts this total order.

\begin{observation}
\label{obs_poporade}
Assume that $vw_c,vw_d\in E_{v,w_aw_b}$. If $a<c<d<b$, then $vw_c \prec_{w_a,w_b} vw_d$.
If $(c,d,a,b)$ is compatible with $(1,2,\dots,n-1)$ as cyclic permutations, then $vw_d \prec_{w_a,w_b} vw_c$. 
\end{observation}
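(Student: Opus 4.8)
The plan is to reduce the statement to a single five-vertex AT-subgraph and then read off the order of the two crossings by a Jordan-curve argument inside a topological triangle.

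First I would reduce to five vertices. By construction, $\prec_{w_a,w_b}$ restricted to the pair $\{vw_c,vw_d\}$ is the relation realized in every simple realization $T$ of the AT-subgraph $A[\{v,w_a,w_b,w_c,w_d\}]$, which is simply realizable by the standing assumption of Step~3a, and the fact that this relation does not depend on $T$ is exactly Proposition~\ref{prop_poradi_na_hvezde}(2). In $T$ the rotation of $v$ involves only the four edges $vw_a,vw_b,vw_c,vw_d$, and by Proposition~\ref{prop_poradi_na_hvezde}(1) it is either the restriction of $(w_1,\dots,w_{n-1})$ to these four neighbours or its reverse; replacing $T$ by a mirror image if necessary, I may assume it is $(w_a,w_c,w_d,w_b)$ in the first case (where $a<c<d<b$) and $(w_a,w_b,w_c,w_d)$ in the second case (where $(c,d,a,b)$ is compatible with $(1,\dots,n-1)$, equivalently the cyclic restriction is $(w_a,w_b,w_c,w_d)$). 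Since $\{w_aw_b,vw_c\},\{w_aw_b,vw_d\}\in\mathcal{X}$ and $T$ is simple, $w_aw_b$ crosses $vw_c$ exactly once, at a point $x_c$, and crosses $vw_d$ exactly once, at a point $x_d\ne x_c$ (no three edges through one point).

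Next I would set up the Jordan domain. The edges $vw_a$, $w_aw_b$, $w_bv$ are pairwise adjacent, so each pair meets only in the common endpoint; hence $\Delta=vw_a\cup w_aw_b\cup w_bv$ is a simple closed curve, bounding two closed disks on the sphere. In the rotation of $v$, the directions of $vw_c$ and $vw_d$ lie on the same side of $\Delta$, namely in the angular wedge between the directions of $vw_a$ and $vw_b$ that contains them (this wedge is determined by the cases above); let $R$ be the disk into which $vw_c$ and $vw_d$ leave $v$, so $\partial R=\Delta$. Because $vw_c$ is adjacent to both $vw_a$ and $vw_b$ and crosses $w_aw_b$ only at $x_c$, it meets $\Delta$ only at $v$ and $x_c$; hence the subarc $\delta_c$ of $vw_c$ from $v$ to $x_c$ lies in $R$ with $\delta_c\cap\partial R=\{v,x_c\}$, and likewise the subarc $\delta_d$ of $vw_d$ from $v$ to $x_d$. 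Moreover $\delta_c\cap\delta_d=\{v\}$, since $vw_c$ and $vw_d$ are adjacent.

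Finally I would conclude by straightening. In the disk $R$, the interior-disjoint arcs $\delta_c,\delta_d$ join the boundary point $v$ to the boundary points $x_c,x_d$, both lying on the boundary arc $w_aw_b$ of $\partial R$; a standard planar argument (straighten $\delta_c,\delta_d$ to chords) shows that the cyclic order of $v,w_a,x_c,x_d,w_b$ along $\partial R$ is forced by the order in which $\delta_c,\delta_d$ emanate from $v$, which is read off from the rotation of $v$. In the first case the rotation $(w_a,w_c,w_d,w_b)$ puts $\delta_c$ on the $w_a$-side of $\delta_d$, forcing the cyclic order $v,w_a,x_c,x_d,w_b$ on $\partial R$, so that $x_c$ is closer to $w_a$ along $w_aw_b$ than $x_d$, i.e.\ $vw_c\prec_{w_a,w_b}vw_d$. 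In the second case the rotation $(w_a,w_b,w_c,w_d)$ puts $\delta_d$ on the $w_a$-side of $\delta_c$, forcing the order $v,w_a,x_d,x_c,w_b$, so that $vw_d\prec_{w_a,w_b}vw_c$. The one delicate point, which I would handle explicitly, is matching the two boundary edges $vw_a,vw_b$ of $\partial R$ at $v$ with the traversal direction of $\partial R$ from $w_a$ to $w_b$, so that ``$\delta_c$ leaves $v$ on the $w_a$-side of $\delta_d$'' really translates into ``$x_c$ precedes $x_d$ when going from $w_a$ to $w_b$''; fixing the rotation of $v$ up front by a reflection, rather than carrying the sign ambiguity of Proposition~\ref{prop_poradi_na_hvezde}(1), is what keeps this bookkeeping clean, and everything else is routine planar topology.
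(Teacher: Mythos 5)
Your argument is correct and follows the same route the paper intends: the paper states this observation without proof (appealing to Figure~\ref{obr_4_2_typicka_hrana}, simple realizability of the five-vertex subgraphs, and Proposition~\ref{prop_poradi_na_hvezde}), and your reduction to a simple realization of $A[\{v,w_a,w_b,w_c,w_d\}]$ followed by the Jordan-curve argument inside the triangle $vw_aw_b$ is exactly the omitted justification, spelled out. The only remark is that the case distinction in your last paragraph amounts to noting which wedge at $v$ the arcs $\delta_c,\delta_d$ occupy, which you do handle; nothing is missing.
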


\begin{figure}
\begin{center}
\epsfbox{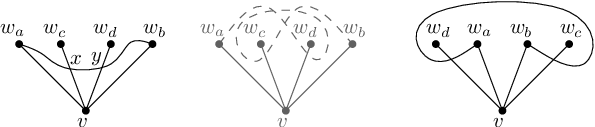}
\end{center}
\caption{Illustration to Observation~\ref{obs_poporade}. The order of the crossings of $w_aw_b$ with the edges $vw_c$ and $vw_d$ is determined by the rotation of $v$ unless $vw_a$ and $vw_b$ alternate with $vw_c$ and $vw_d$ in the rotation of $v$. The opposite order would force at least three self-crossings of the edge $w_aw_b$, assuming Observation~\ref{obs_zleva_zprava}.}
\label{obr_4_2b_poradi_krizeni}
\end{figure}

\begin{proof}
Consider a simple realization of the complete AT-graph $A[\{v,w_a,w_b,w_c,w_d\}]$ where the rotation of $v$ is $(w_a,w_c,w_d,w_b)$ and $v$ is incident with the outer face of the drawing; see Figure~\ref{obr_4_2b_poradi_krizeni}, left. Let $x$ be the crossing of the edges $w_aw_b$ and $vw_c$, and $y$ the crossing of the edges $w_aw_b$ and $vw_d$. The rotation of $v$ determines that the portion of the edge $vw_d$ between $v$ and $y$ lies inside the closed curve $vxw_b$ formed by portions of the edges $vw_c$, $w_aw_b$ and by the edge $vw_b$. This means that on the edge $w_aw_b$, the crossing $x$ is closer to $w_a$ than $y$. The case when the rotation of $v$ is $(w_a,w_b,w_c,w_d)$ is symmetric; see Figure~\ref{obr_4_2b_poradi_krizeni}, right.
\end{proof}

On the other hand, it is easy to see that every homotopy class satisfying Observations~\ref{obs_zleva_zprava} and~\ref{obs_poporade} has a representative that is a simple curve; see Figure~\ref{obr_4_2_typicka_hrana}. Therefore, $cr(w_aw_b)=0$. Let $\gamma_{a,b}$ be the simple curve representing the edge $w_aw_b$ in $D$.

\begin{figure}
\begin{center}
\epsfbox{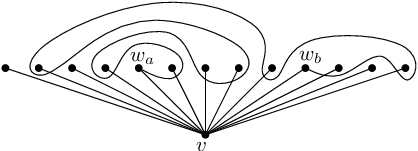}
\end{center}
\caption{A drawing of a ``typical'' edge $w_aw_b$ and the star $S(v)$.}
\label{obr_4_2_typicka_hrana}
\end{figure}



\subsubsection*{3b) The parity of the crossing numbers}

\begin{observation}
\label{obs_parity_indep}
Let $w_aw_b$ and $w_cw_d$ be two independent edges. Then ${\rm cr}(w_aw_b,\allowbreak w_cw_d)$ is odd if and only if $\{w_aw_b,\allowbreak w_cw_d\}\in\mathcal{X}$.
\end{observation}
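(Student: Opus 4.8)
The plan is to deduce Observation~\ref{obs_parity_indep} from the fact that the drawing $D$ realizes the rotation system and homotopy classes computed in Steps~1 and~2, together with a parity invariant. First I would recall that for any two independent edges $e_1=w_aw_b$ and $e_2=w_cw_d$, the parity of the number of crossings between the curves representing $e_1$ and $e_2$ is an invariant of their homotopy classes relative to the boundary of the surface $\Sigma$ (obtained by puncturing the sphere at all vertices except $v$): a homotopy of either curve keeping endpoints on the boundary fixed changes the number of crossings by an even amount, since every Reidemeister-type move across the other curve creates or destroys crossings in pairs. Hence $\mathrm{cr}(e_1,e_2) \bmod 2$ depends only on the homotopy classes, which in turn are determined by the five-vertex AT-subgraphs via Step~2.

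Next I would compute this parity directly from the combinatorial data. The homotopy class of $e_i$ is encoded by the ordered sequence of edges of $S(v)$ it crosses (Observations~\ref{obs_zleva_zprava} and~\ref{obs_poporade}), so I can pick the explicit simple representatives $\gamma_{a,b}$ and $\gamma_{c,d}$ from Step~3a and count their crossings modulo~$2$. The cleanest route is to use the fact that in $\Sigma$, after cutting along the edges of the non-crossing star $S(v)$, the surface becomes a disc (or a union of discs), and two arcs in a disc with prescribed endpoints on the boundary cross an even or odd number of times according to whether their endpoint pairs are ``linked'' on the boundary circle. So I would track how the four endpoints $x_{e_1}$-side, $y_{e_1}$-side, $x_{e_2}$-side, $y_{e_2}$-side interleave along the boundary of the cut-open surface, and show this linking parity equals $1$ exactly when $\{e_1,e_2\}\in\mathcal{X}$.

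Since verifying the latter equivalence in full generality by case analysis on the positions of $a,b,c,d$ on the rotation cycle of $v$ would be tedious, the slicker argument I would actually present is a reduction to small cases: the parity of $\mathrm{cr}(e_1,e_2)$ is determined by the AT-subgraph $A[\{v,w_a,w_b,w_c,w_d\}]$ on at most five vertices, because the homotopy classes of $e_1$ and $e_2$ restricted to the star on $v$ within these five vertices already determine the relevant linking, and adding further vertices of $S(v)$ only inserts additional crossings of $e_i$ with edges $vw_k$ not in $\{v,w_a,w_b,w_c,w_d\}$, which come in pairs with respect to the other arc. Thus it suffices to check Observation~\ref{obs_parity_indep} when $n=5$, where $A$ is simply realizable by assumption (every five-vertex AT-subgraph is realizable, else Step~1a would have rejected), and in a simple realization two independent edges cross an odd number of times (namely once) iff they cross at all iff the pair is in $\mathcal{X}$.

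I expect the main obstacle to be making the reduction to five vertices rigorous: one must argue carefully that the ``extra'' crossings of $e_i$ with stars edges $vw_k$ for $k\notin\{a,b,c,d\}$, together with any self-homotopy used to realize the minimal drawing, do not change the crossing parity of $e_1$ with $e_2$. This needs the homotopy-invariance of crossing parity stated above applied within the larger surface $\Sigma$, plus the observation that restricting $D$ to the vertices $v,w_a,w_b,w_c,w_d$ yields curves homotopic (in the smaller punctured sphere) to the computed homotopy classes of the five-vertex subgraph — which follows because Steps~1 and~2 are themselves determined by five-vertex subgraphs. Once this compatibility is in place, the parity computation collapses to the trivial five-vertex instance and the observation follows.
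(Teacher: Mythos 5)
Your argument is correct, but it transfers the parity by a different mechanism than the paper. The paper never invokes homotopy invariance explicitly: it looks at the two triangles $vw_aw_b$ and $vw_cw_d$, which are simple closed curves in $D$ (the star edges are pairwise noncrossing, $\gamma_{a,b}$ and $\gamma_{c,d}$ are simple and avoid their adjacent star edges), and uses the fact that two closed curves cross an even number of times. Every crossing between these triangles other than those between $\gamma_{a,b}$ and $\gamma_{c,d}$ --- the possible crossing at the shared vertex $v$ (when the two rotations at $v$ interleave) and the at most four star--chord crossings --- occurs in $D$ exactly when it occurs in a simple realization $T$ of $A[\{v,w_a,w_b,w_c,w_d\}]$, since these are dictated by the rotation system and by $\mathcal{X}$; as the same two triangles in $T$ also cross evenly, the chord--chord parities in $D$ and in $T$ agree, and in $T$ the chords cross once precisely when the pair is in $\mathcal{X}$. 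You reach the same reduction to the five-vertex realization $T$, but via homotopy invariance of the mod-$2$ intersection number of arcs with endpoints on the boundary, plus filling in the holes at the vertices outside $\{w_a,w_b,w_c,w_d\}$ to identify the homotopy classes of $\gamma_{a,b},\gamma_{c,d}$ with those realized by $T$; this identification is sound because each chord meets at most two of the four remaining star edges (exactly those prescribed by $\mathcal{X}$, by the construction of $D$), so the crossing word admits no cancellation and your cut-along-the-star argument pins down the class. Your route is more general and makes the invariance explicit; the paper's is shorter and self-contained, needing only Jordan-curve parity and the properties of $D$ already established in Step 3. One caveat: your intermediate claim that the extra crossings with edges $vw_k$, $k\notin\{a,b,c,d\}$, ``come in pairs with respect to the other arc'' is not the right statement (those are crossings with star edges, not with the other chord); it is the puncture-filling and homotopy-invariance argument of your final paragraph, not that pairing claim, that legitimizes the reduction to five vertices.
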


\begin{proof}
Each of the triangles $vw_aw_b$ and $vw_cw_d$ is represented by a simple closed curve in $D$, so the total number of crossings of the two curves is even. The crossings include the vertex $v$ if $(w_a,w_c,w_b,w_d)$, $(w_a,w_d,w_b,w_c)$, or the inverse of one of these two cyclic permutations is compatible with the rotation of $v$. The following pairs of edges may cross once in $D$: $\{w_aw_b,vw_c\},\allowbreak \{w_aw_b,vw_d\},\allowbreak \{w_cw_d,vw_a\},\allowbreak \{w_cw_d,vw_b\}$. Each of these five crossings appears in $D$ if and only if it appears in a topological graph $T$ that is a simple realization of the induced AT-graph $A[\{v,w_a,w_b,w_c,w_d\}]$. The remaining crossings between the two triangles $vw_aw_b$ and $vw_cw_d$ in $D$ may appear only between the arcs $\gamma_{a,b}$ and $\gamma_{c,d}$. Since the corresponding triangles $vw_aw_b$ and $vw_cw_d$ in $T$ also have an even number of crossing in total, the parity of the number of crossings between $\gamma_{a,b}$ and $\gamma_{c,d}$ is odd if and only 
if $w_aw_b$ and $w_cw_d$ cross in $T$. The observation follows.
\end{proof}

\begin{observation}
\label{obs_parity_adj}
Let $w_aw_b$ and $w_aw_c$ be two adjacent edges. Then ${\rm cr}(w_aw_b,\allowbreak w_aw_c)$ is even.
\end{observation}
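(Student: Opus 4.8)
The plan is to mimic the argument for Observation~\ref{obs_parity_indep}, replacing the two triangles by two closed curves that share only the vertex $v$, but now adapted to the adjacent case. The natural objects to consider are the two simple closed curves $C_{b} = vw_a \cup \gamma_{a,b} \cup w_b v$ and $C_{c} = vw_a \cup \gamma_{a,c} \cup w_c v$ traced out by the triangles $vw_aw_b$ and $vw_aw_c$ in the drawing $D$. These curves are no longer disjoint off $v$: they share the whole edge $vw_a$ of the star $S(v)$. So instead I would work with the two closed curves directly and count crossings between them away from their common part.

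First I would recall that in $D$ the edges of the star $S(v)$ are pairwise noncrossing, so $C_b$ and $C_c$ agree exactly along the arc $vw_a$ and are otherwise in "general position" except at $v$. The crossings between $C_b$ and $C_c$ that are not forced to lie on the shared arc $vw_a$ come from exactly the following sources: a possible crossing at $v$ itself (depending on whether the rotation of $v$ interleaves $w_b$ and $w_c$ relative to $w_a$, which is determined by the rotation system computed in Step~1), the possible single crossings of $\gamma_{a,b}$ with $vw_c$ and of $\gamma_{a,c}$ with $vw_b$ (each present in $D$ iff the corresponding pair is in $\mathcal{X}$, hence iff it is present in any simple realization $T$ of $A[\{v,w_a,w_b,w_c\}]$), and the crossings between the two arcs $\gamma_{a,b}$ and $\gamma_{a,c}$ themselves, whose number is exactly ${\rm cr}(w_aw_b,w_aw_c)$. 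Now I would invoke the same device as in Observation~\ref{obs_parity_indep}: the induced AT-subgraph $A[\{v,w_a,w_b,w_c\}]$ is a complete AT-graph on four vertices and is therefore simply realizable; fix a simple realization $T$ of it. In $T$ the two triangles $vw_aw_b$ and $vw_aw_c$ are simple closed curves sharing the arc $vw_a$, and the number of crossings between the corresponding closed curves, counted with the same case analysis at $v$ and at the star-edges, has a fixed parity $p$ determined purely by the rotation system and by $\mathcal{X}$ restricted to this $4$-tuple. Since $T$ is simple, $\gamma_{a,b}$ and $\gamma_{a,c}$ do not cross in $T$, so that parity $p$ equals the parity contributed by the crossing at $v$ plus the star-edge crossings, with the $\gamma$-$\gamma$ contribution equal to $0$. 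Comparing with $D$, where the crossing at $v$ and the star-edge crossings contribute the same parity $p$ (they are determined by the same rotation system and the same membership in $\mathcal{X}$), we conclude that the number of crossings between $\gamma_{a,b}$ and $\gamma_{a,c}$ in $D$ is even.

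The step I expect to be the main obstacle is making precise the claim that the total parity of "intersections away from the common arc $vw_a$" is an invariant between $D$ and $T$, i.e.\ that two closed curves sharing a common initial arc and otherwise in general position have an intersection parity that depends only on the combinatorial data (rotation at the shared endpoints, and which transversal crossings occur). A clean way to handle this is to perturb: push $C_c$ slightly off the shared arc $vw_a$ to one side, obtaining a closed curve $C_c'$ disjoint from $vw_a$ except near $v$ and $w_a$, so that $C_b$ and $C_c'$ are two closed curves meeting transversally in finitely many points; then their mod-$2$ intersection number is $0$ because $C_b$ bounds (it is a simple closed curve, hence null-homologous in $\mathbb{R}^2$, or equivalently in the sphere). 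The perturbation changes the count near $v$ and near $w_a$ in a way that is completely determined by the two rotations, and the same perturbation applied inside $T$ changes the count by the same amount; subtracting, the parity of the number of crossings among the "interesting" pieces (the four star-edge crossings and the $\gamma$-$\gamma$ crossings) is the same in $D$ and in $T$. Since all four star-edge crossings have the same status in $D$ and $T$ (present iff in $\mathcal{X}$), and the $\gamma$-$\gamma$ count is $0$ in $T$, we get ${\rm cr}(w_aw_b,w_aw_c)$ even in $D$. I would present this via the perturbation picture in a figure analogous to Figure~\ref{obr_4_2_typicka_hrana}, keeping the bookkeeping at $v$ and at $w_a$ explicit.
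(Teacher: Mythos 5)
Your proof is correct and follows essentially the same route as the paper: both compare $D$ with a simple realization $T$ of $A[\{v,w_a,w_b,w_c\}]$ and use a Jordan-curve parity count in which the rotation-determined local data at $v$ and $w_a$ and the star-edge crossings (governed by $\mathcal{X}$) agree in $D$ and $T$, forcing the number of crossings between $\gamma_{a,b}$ and $\gamma_{a,c}$ to be even. The only difference is cosmetic: instead of perturbing the second triangle off the shared edge $vw_a$, the paper pairs the simple closed curve $vw_aw_b$ with the arc given by the path $vw_cw_a$, whose end-pieces attach to the triangle on sides determined by the rotations at $v$ and $w_a$, thereby avoiding your perturbation bookkeeping.
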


\begin{proof}
The triangle $vw_aw_b$ is represented by a simple closed curve $\gamma$ in $D$, and the path $vw_cw_a$ is represented by a simple arc $\alpha$. The rotations at $v$ and $w_a$ determine whether the end-pieces of $\alpha$ are attached to $\gamma$ from inside or outside. The following two pairs of edges can cross once in $D$: $\{w_aw_b,vw_c\}$ and $\{w_aw_c,vw_b\}$. Each of these crossings appears in $D$ if and only if it appears in a simple realization $T$ of $A[\{v,w_a,w_b,w_c\}]$. The remaining crossings between the curves $\gamma$ and $\alpha$ in $D$ may appear only between the arcs $\gamma_{a,b}$ and $\gamma_{a,c}$. The end-pieces of $\alpha$ are both attached to $\gamma$ from the same side if and only if in $T$, the end-pieces of the path $vw_cw_a$ are attached to the triangle $vw_aw_b$ from the same side. Therefore, the parity of the total number of crossings between $\gamma$ and $\alpha$ is even if and only if the path $vw_cw_a$ crosses the triangle $vw_aw_b$ an even number of times in $T$, which 
happens if and only if $T$ is planar. The parity of the number of crossings between $\gamma_{a,b}$ and $\gamma_{a,c}$ is thus equal to the parity of the number of crossings between $w_aw_b$ and $w_aw_c$ in $T$. Since $w_aw_b$ and $w_aw_c$ do not cross in $T$, the observation follows.
\end{proof}

Observations~\ref{obs_parity_indep} and~\ref{obs_parity_adj} imply that $A$ is realizable if and only if every pair of edges in $D$ crosses at most once. 


\subsubsection*{3c) Detecting crossings of adjacent edges}

We show that adjacent edges do not cross in $D$, otherwise some AT-subgraph of $A$ with five vertices is not simply realizable. Let $w_aw_b$ and $w_aw_c$ be two adjacent edges. By symmetry, we may assume that $a<b<c$. We will consider cyclic intervals $(a,b)$, $(b,c)$ and $(c,a)=(c,n-1] \cup [1,a)$. 
We define the following subsets of $E_{v,w_aw_b}$ and $E_{v,w_aw_c}$. For each of the three cyclic intervals $(i,j)$, let $F_b(i,j)=\{vw_k\in E_{v,w_aw_b}; k\in (i,j)\}$ and $F_c(i,j)=\{vw_k\in E_{v,w_aw_c}; k\in (i,j)\}$. 
We will also write $\prec_b$ as a shortcut for $\prec_{w_aw_b}$ and $\prec_c$ as a shortcut for $\prec_{w_aw_c}$.
By symmetry, we have two general cases.  


\paragraph{I) $w_aw_b$ does not cross $vw_c$ and $w_aw_c$ does not cross $vw_b$.} In this case, the rotation of $w_a$ is compatible with $(v,w_c,w_b)$. We observe the following conditions.

\begin{observation} 
\label{obs_c_a}
We have $F_b(c,a) \subseteq F_c(c,a)$ and $F_c(a,b) \subseteq F_b(a,b)$.
\end{observation}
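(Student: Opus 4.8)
The plan is to unpack both inclusions directly from Observations~\ref{obs_zleva_zprava} and~\ref{obs_poporade} together with the case hypothesis in Case~I. Recall that in Case~I the rotation of $w_a$ is compatible with $(v,w_c,w_b)$, and we are assuming $a<b<c$. The quantities $F_b(c,a)$ and $F_c(c,a)$ are the sets of star-edges $vw_k$, with $k$ in the cyclic interval $(c,a)$, that are crossed by $w_aw_b$ and by $w_aw_c$ respectively; and similarly $F_b(a,b)$, $F_c(a,b)$ concern $k\in(a,b)$. So each inclusion is really a statement of the form: for each $k$ in the relevant cyclic interval, if $w_aw_b$ crosses $vw_k$ then so does $w_aw_c$ (and dually). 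Since whether two edges cross in $D$ is determined by the AT-graph on five vertices $\{v, w_a, w_b\text{ or }w_c, w_k\}$ — equivalently by $\{w_aw_b, vw_k\}\in\mathcal X$ or not — this is a purely local claim that I would verify by looking at simple drawings of $K_4$ (on $v, w_a, w_b, w_k$ and on $v, w_a, w_c, w_k$).

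Concretely, for the first inclusion $F_b(c,a)\subseteq F_c(c,a)$: take $vw_k\in F_b(c,a)$, so $k\in(c,a)$ and $w_aw_b$ crosses $vw_k$. I want to show $w_aw_c$ crosses $vw_k$. Consider the induced AT-subgraph on $\{v,w_a,w_b,w_c,w_k\}$; since it is simply realizable, fix a simple realization $T$. In $T$ the triangle $vw_aw_b$ is a closed curve, and since $w_aw_b$ crosses $vw_k$, the vertex $w_k$ lies on the ``far'' side of this triangle — on the opposite side from $w_c$ in the sense dictated by the rotation at $v$ (note $k\notin(a,b)$, so $vw_k$ leaves $v$ outside the angular sector that $vw_b$ together with the arc of the triangle spans, forcing the crossing of $w_aw_b$ with $vw_k$ rather than with the edge $vw_a$-side). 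Because the rotation of $w_a$ is compatible with $(v,w_c,w_b)$, the edge $w_aw_c$ departs $w_a$ on the same side of $vw_a$ as $w_aw_b$ does, hence to reach $w_k$ it must also cross the same side of the star, i.e.\ cross $vw_k$. The dual inclusion $F_c(a,b)\subseteq F_b(a,b)$ is symmetric: for $k\in(a,b)$, if $w_aw_c$ crosses $vw_k$ then, since $k$ lies ``between'' $a$ and $b$ and since $w_aw_b$ leaves $w_a$ between $w_aw_c$ and the side containing $vw_b$, the edge $w_aw_b$ is even ``more forced'' to cross $vw_k$ — again read off from the $K_4$ on $\{v,w_a,w_b,w_k\}$ using Observation~\ref{obs_zleva_zprava} for the rotation of the crossing and the position of $w_k$.

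The main obstacle, and the only part that needs care rather than routine checking, is making precise the phrase ``$w_k$ lies on the far side of the triangle $vw_aw_b$'' and arguing that this side is preserved when we replace $w_b$ by $w_c$: this is exactly where the Case~I hypothesis (rotation of $w_a$ compatible with $(v,w_c,w_b)$) and the ordering assumption $a<b<c$ combine, and where one must rule out the alternative configuration in which $w_aw_c$ reaches $w_k$ ``the other way around'' without crossing $vw_k$. I expect this to come down to a short finite case analysis over simple drawings of $K_4$ and $K_5$, or equivalently over the allowed rotations of crossings given by Observation~\ref{obs_zleva_zprava}; once that configuration is pinned down, both inclusions follow immediately.
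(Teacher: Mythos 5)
Your overall strategy is the same as the paper's: since membership in $F_b(\cdot,\cdot)$ and $F_c(\cdot,\cdot)$ is determined by $\mathcal{X}$ alone, one passes to a simple realization $T$ of the induced five-vertex AT-subgraph $A[\{v,w_a,w_b,w_c,w_k\}]$ (which exists by the standing assumption) and argues that the crossing of $w_aw_c$ with $vw_k$ is forced in $T$; the paper does exactly this and disposes of the forcing step by pointing at a figure. The problem is that your justification of the forcing step, which you yourself flag as the only nontrivial part, does not hold up as written. First, the sentence ``hence to reach $w_k$ it must also cross \dots $vw_k$'' is a slip: the edge $w_aw_c$ ends at $w_c$, not at $w_k$, so nothing about reaching $w_k$ is relevant. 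Second, the suggested reduction to simple drawings of $K_4$ on $\{v,w_a,w_b,w_k\}$ and on $\{v,w_a,w_c,w_k\}$ separately cannot work: the second $K_4$ contains no information at all that forces $w_aw_c$ to cross $vw_k$ (there are simple drawings of that $K_4$ with and without this crossing). The forcing genuinely needs both Case~I hypotheses simultaneously inside the five-vertex drawing, i.e.\ that $w_aw_b$ avoids $vw_c$ \emph{and} that $w_aw_c$ avoids $vw_b$. Third, the rotation of $w_a$ is not the mechanism; it is not needed for this observation.

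The correct short argument in $T$ runs as follows. Adjacent edges do not cross in a simple drawing, and by Case~I the edge $w_aw_b$ avoids $vw_c$ and the edge $w_aw_c$ avoids $vw_b$. Hence the triangle $vw_aw_b$ is a simple closed curve; since $vw_c$ crosses none of its three edges, $w_c$ lies in the region $R$ of this triangle whose sector at $v$ contains the directions of $vw_c$ and $vw_k$ (recall $k\in(c,a)$), while $vw_k$ enters $R$ at $v$ and leaves it through its unique crossing $x$ with $w_aw_b$ (so $w_k$ is on the other side, as you correctly noted). The arc of $vw_k$ from $v$ to $x$ cuts $R$ into two discs: $w_c$ lies in the disc bounded in part by $vw_b$, whereas $w_a$ lies only on the boundary of the other disc. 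The edge $w_aw_c$ cannot cross $vw_a$ or $w_aw_b$ (adjacency) or $vw_b$ (Case~I), so it stays in $R$ and must cross the arc of $vw_k$ to reach $w_c$; this is the forced crossing. With this replacement (and its mirror image for $F_c(a,b)\subseteq F_b(a,b)$), your plan matches the paper's proof.
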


\begin{proof}
We prove the first inclusion, the second one follows by symmetry. Let $d\in (c,a)$ such that $w_aw_b$ crosses $vw_d$. Let $T$ be a simple realization of $A[\{v,\allowbreak w_a,\allowbreak w_b,\allowbreak w_c,\allowbreak w_d\}]$. Refer to Figure~\ref{obr_4_3_I}, left. The vertices $w_a$ and $w_c$ are separated by the simple closed curve formed by the edge $vw_b$ and portions of the edges $w_bw_a$ and $vw_d$. Since $w_aw_c$ cannot cross $w_aw_b$ in $T$, and by the assumption of case I) it does not cross $vw_b$ either, the edge $w_aw_c$ is forced to cross $vw_d$ in $T$.
\end{proof}

\begin{figure}
\begin{center}
\epsfbox{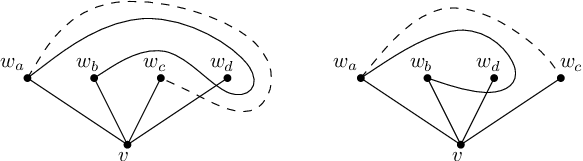}
\end{center}
\caption{Left: illustration to Observation~\ref{obs_c_a}. Right: illustration to Observation~\ref{obs_b_c_disjoint}.}
\label{obr_4_3_I}
\end{figure}

\begin{observation} 
\label{obs_b_c_disjoint}
The sets $F_b(b,c)$ and $F_c(b,c)$ are disjoint.
\end{observation}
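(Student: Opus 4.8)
The plan is to argue by contradiction: suppose some edge $vw_d$ lies in both $F_b(b,c)$ and $F_c(b,c)$, so $d\in(b,c)$ and $w_aw_b$ crosses $vw_d$ and $w_aw_c$ crosses $vw_d$. I will extract a contradiction already from the five-vertex AT-subgraph $A[\{v,w_a,w_b,w_c,w_d\}]$, by showing that no simple realization $T$ of it can exist; since $A$ itself is simply realizable (we are trying to derive a contradiction from a hypothetical simple realization $D$ of $A$, hence every five-vertex AT-subgraph is simply realizable), this is the desired contradiction. So the real content is: in the configuration where (i) the rotation of $w_a$ is compatible with $(v,w_c,w_b)$ (we are in Case I), (ii) $d$ lies strictly between $b$ and $c$ in the rotation of $v$, (iii) $w_aw_b$ crosses $vw_d$, and (iv) $w_aw_c$ crosses $vw_d$, the AT-subgraph on these five vertices is not simply realizable.

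First I would record the rotation of $v$ restricted to $\{w_a,w_b,w_c,w_d\}$, which by the ordering $a<b<d<c$ is $(w_a,w_b,w_d,w_c)$, and the rotation of $w_a$, which is compatible with $(v,w_c,w_b)$; I also note that by Observation~\ref{obs_zleva_zprava} the crossing of $w_aw_b$ with $vw_d$ has rotation $(w_a,w_d,w_b,v)$ (since $a<d<b$ fails — actually $a<b<d$, so $d$ is \emph{not} between $a$ and $b$, giving rotation $(w_b,w_d,w_a,v)$), and the crossing of $w_aw_c$ with $vw_d$ has rotation $(w_a,w_d,w_c,v)$ (since $a<d<c$). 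Then I would trace the curve $vw_d$ starting at $v$: it leaves $v$ into the region bounded locally by the edges $vw_b$ and $vw_c$ (the ``wedge'' at $v$ between $w_b$ and $w_c$), and it must reach $w_d$. Along the way it is crossed by both $w_aw_b$ and $w_aw_c$. The key geometric point is that the simple closed curve formed by $vw_a\cup w_aw_b\cup vw_b$ (the triangle $vw_aw_b$) and the simple closed curve formed by the triangle $vw_aw_c$ each separate the sphere, and $w_d$ sits on a definite side of each; combining the side of $w_d$ relative to triangle $vw_aw_b$ (forced by the rotation at $v$ and the crossing data) with the side forced relative to triangle $vw_aw_c$, together with the constraint that $vw_d$ is a simple arc crossing each triangle's ``far'' edge exactly once, will be contradictory. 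Concretely, I expect that one triangle forces $w_d$ to be on the side of that triangle \emph{not} containing $w_c$ while the other forces $w_d$ onto the side containing $w_c$ (or an analogous clash with $w_b$), which is the contradiction; Figure~\ref{obr_4_3_I}, right, presumably shows exactly this forced configuration and the unavoidable extra crossing.

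The main obstacle is bookkeeping the sidedness consistently: there are several cyclic orders (at $v$, at $w_a$, at the two crossings, and the rotations at $w_b,w_c,w_d$) and one must combine them correctly to see which region of the plane $w_d$ is pinned into. The cleanest way to handle this, which is how I would actually write it, is to fix a concrete simple drawing of the two triangles $vw_aw_b$ and $vw_aw_c$ realizing the prescribed rotations at $v$ and $w_a$ (they share the edge $vw_a$, so together they form a ``theta-like'' picture with a bounded number of faces), then observe that the faces incident to $v$ in the wedge between $w_b$ and $w_c$ are determined, and checking in each such face whether a simple arc from $v$ to a point in it can cross $w_aw_b$ once and $w_aw_c$ once with the prescribed crossing rotations — and finding that in every case it cannot, or it forces an additional forbidden crossing among the five edges. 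Because everything lives on five vertices, this is a finite case check; by Proposition~\ref{prop_poradi_na_hvezde} it suffices to check it at the level of rotation systems, so in the write-up I would simply assert that the listed rotation data is inconsistent, referring to the figure, exactly in the terse style of Observation~\ref{obs_c_a}'s proof.
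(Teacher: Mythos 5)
Your proposal is correct, but it is organized a bit differently from the paper's proof, and one framing point needs fixing. The paper argues directly inside a simple realization $T$ of $A[\{v,w_a,w_b,w_c,w_d\}]$ (which exists by the standing assumption that every five-vertex AT-subgraph is simply realizable --- not because $D$ is a simple realization of $A$; $D$ is only the crossing-minimal drawing with the computed rotation system and homotopy classes, so your parenthetical justification is off, though the fact you need is available anyway): it takes the single closed curve formed by $vw_a$ together with the parts of $w_aw_b$ and $vw_d$ up to their crossing, notes that $w_aw_c$ is adjacent to $vw_a$ and $w_aw_b$ and (by the Case I hypotheses and simplicity) cannot cross this curve, and concludes that $w_aw_c$ cannot cross $vw_d$. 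You instead argue contrapositively via the two triangles $vw_aw_b$ and $vw_aw_c$: since in $T$ these five edges are pairwise noncrossing (adjacency, plus the Case I conditions that $w_aw_b$ misses $vw_c$ and $w_aw_c$ misses $vw_b$), they form a plane $K_4$ minus $w_bw_c$ whose face structure is forced by the rotations at $v$ and $w_a$, namely the two triangular faces and one quadrilateral face bounded by $vw_b,w_aw_b,w_aw_c,vw_c$; the edge $vw_d$ leaves $v$ into the quadrilateral face (as $d\in(b,c)$), and after its single permitted crossing with either $w_aw_b$ or $w_aw_c$ it lands in a triangular face that does not have the other edge on its boundary, so it can never cross both --- this is exactly the finite check you defer to, and it succeeds without even needing the crossing rotations you computed from Observation~\ref{obs_zleva_zprava}. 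Your ``expected'' contradiction should be restated in this form (as phrased, ``the side of $vw_aw_c$ containing $w_c$'' does not parse, since $w_c$ is a vertex of that triangle; the correct sidedness version is that $w_d$ would have to lie both off the $w_c$-side of $vw_aw_b$ and on the non-$w_b$-side of $vw_aw_c$, two regions that the forced embedding makes disjoint). With that repair your route is sound; the paper's one-curve argument is shorter, while yours has the small advantage of not tracking the crossing point explicitly.
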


\begin{proof}
Let $d\in (b,c)$ such that $w_aw_b$ crosses $vw_d$. Let $T$ be a simple realization of $A[\{v,\allowbreak w_a,\allowbreak w_b,\allowbreak w_c,\allowbreak w_d\}]$ such that $v$ is incident with the outer face. Refer to Figure~\ref{obr_4_3_I}, right. Since $w_aw_c$ is adjacent to $vw_a$ and $w_aw_b$, it cannot cross the triangle $vw_aw_b$, and so $w_aw_c$ must be drawn outside the triangle $vw_aw_b$. Therefore, $w_aw_c$ cannot cross $vw_d$ in $T$, otherwise it would have to cross $vw_d$ at least twice.
\end{proof}

\begin{observation} 
\label{obs_b_c_separated}
If $vw_d\in F_b(b,c)$ and $vw_e\in F_c(b,c)$, then $d<e$.
\end{observation}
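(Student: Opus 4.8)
The plan is to argue by contradiction inside the fixed drawing $D$. Suppose $vw_d\in F_b(b,c)$ and $vw_e\in F_c(b,c)$ but $e\le d$. Since $F_b(b,c)$ and $F_c(b,c)$ are disjoint by Observation~\ref{obs_b_c_disjoint}, this gives $d\ne e$, hence $b<e<d<c$; the goal is to show that then $w_aw_c$ is forced to cross $vw_d$, which is impossible because $vw_d\notin E_{v,w_aw_c}$ (again by the disjointness of $F_b(b,c)$ and $F_c(b,c)$, since $d\in(b,c)$).

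The construction I have in mind is modelled on the proof of Observation~\ref{obs_b_c_disjoint}. Let $T_0=vw_a\cup\gamma_{a,b}\cup vw_b$. As $\gamma_{a,b}$ is a simple curve adjacent to $vw_a$ and to $vw_b$, it meets neither, so $T_0$ is a simple closed curve, bounding a closed disk $\Delta$ on the side into which $vw_c$ leaves $v$. Using Case~I together with the rotation of $v$, which is $(w_1,\dots,w_{n-1})$, and the rotation of $w_a$, which is compatible with $(v,w_c,w_b)$, I would check one edge at a time that $vw_c$ does not meet $T_0$, so $w_c\in\Delta$; that the simple curve $\gamma_{a,c}$ representing $w_aw_c$ does not meet $T_0$ and leaves $w_a$ into $\Delta$, so $\gamma_{a,c}\subseteq\overline{\Delta}$; that $vw_e$ does not meet $T_0$ (here $vw_e\notin E_{v,w_aw_b}$ by disjointness, since $e\in(b,c)$), so $w_e\in\Delta$; and that $vw_d$ meets $T_0$ in exactly one point $x$, which is an interior point of $\gamma_{a,b}$, so that the subarc $\alpha\subseteq vw_d$ from $v$ to $x$ lies in $\Delta$.

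Then I would cut $\Delta$ along $\alpha$ into two closed disks $\Delta_1$ and $\Delta_2$, labelled so that $w_a$ lies on the boundary of $\Delta_1$. Inspecting the cyclic order of the edges leaving $v$ into $\Delta$, these contain $vw_c$, $vw_d$ and $vw_e$ in this order read from the $vw_a$-side of $\Delta$ to the $vw_b$-side --- this is the only place where the hypothesis $b<e<d<c$ is used --- so that $w_c\in\Delta_1$ while $w_e\in\Delta_2$. Now $\gamma_{a,c}$ starts at $w_a$, hence in $\overline{\Delta_1}$, and it has to reach its crossing point with $vw_e$, which lies in $\overline{\Delta_2}$ because $vw_e\subseteq\overline{\Delta_2}$. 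Since $\gamma_{a,c}$ avoids $T_0$, it can get from $\Delta_1$ to $\Delta_2$ only by crossing $\alpha\subseteq vw_d$; thus $w_aw_c$ crosses $vw_d$, the desired contradiction, and therefore $d<e$.

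The one genuinely delicate point, which I expect to be the main obstacle, is the cyclic-order bookkeeping in the previous paragraph: one must match up, consistently at $v$ and at $w_a$, which side of $T_0$ is $\Delta$, and then on which side of $\alpha$ each of $w_c$ and $w_e$ falls. The only topological fact used is the Jordan-curve observation that a connected arc starting in the open disk $\Delta_1$ and reaching $\overline{\Delta_2}$ must cross the separating arc $\alpha$, provided it avoids the rest of $\partial\Delta_1$. Since $w_aw_b$ and $w_aw_c$ are adjacent, one should also ensure in the argument that $\gamma_{a,b}$ and $\gamma_{a,c}$ can be taken disjoint --- exactly as in the proof of Observation~\ref{obs_b_c_disjoint}, where the corresponding curves live in a simple realization of a five-vertex AT-subgraph.
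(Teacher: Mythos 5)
There is a genuine gap, and it is the point you flag at the very end but do not resolve: your argument needs $\gamma_{a,c}$ to be disjoint from $T_0=vw_a\cup\gamma_{a,b}\cup vw_b$, i.e.\ it needs the adjacent edges $w_aw_b$ and $w_aw_c$ not to cross in $D$. But that is exactly the statement that Step 3c (of which Observation~\ref{obs_b_c_separated} is an ingredient) is in the process of proving; at this stage of the argument nothing rules out that $\gamma_{a,c}$ crosses $\gamma_{a,b}$ (an even number of times, by Observation~\ref{obs_parity_adj}), in which case $\gamma_{a,c}$ can leave the disc $\Delta$ through $\gamma_{a,b}$ on the $\Delta_1$ side and re-enter on the $\Delta_2$ side, reaching its crossing with $vw_e$ without ever meeting $\alpha\subseteq vw_d$. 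So the Jordan-curve step collapses, and using the disjointness here is circular. Your proposed fix --- ``ensure disjointness exactly as in the proof of Observation~\ref{obs_b_c_disjoint}, where the curves live in a simple realization of a five-vertex AT-subgraph'' --- does not apply, because your configuration involves the six vertices $v,w_a,w_b,w_c,w_d,w_e$, and no five-vertex realization can host all of $w_aw_b$, $w_aw_c$, $vw_d$, $vw_e$ simultaneously; a simple realization of the six-vertex AT-subgraph is not available (its nonexistence under $e<d$ is essentially what is being shown). The other ingredients of your argument (that $vw_c,vw_e$ miss $T_0$, that $vw_d$ meets it exactly once on $\gamma_{a,b}$, the reduction of the side-bookkeeping to the rotation at $v$) are fine.

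The paper avoids any drawing of the six-point configuration. It takes two five-vertex AT-subgraphs, $A[\{v,w_a,w_b,w_d,w_e\}]$ and $A[\{v,w_a,w_c,w_d,w_e\}]$, whose simple realizability is guaranteed by Step 1a, and shows that under $e<d$ the first forces the pair $\{w_aw_d,vw_e\}$ to be non-crossing (and pins down the rotation of $w_a$), while the second forces it to be crossing; since membership of $\{w_aw_d,vw_e\}$ in $\mathcal{X}$ is a single combinatorial fact, one of the two five-vertex AT-subgraphs is not simply realizable, a contradiction. If you want to keep your topological separation argument, the clean repair is to run it verbatim inside a hypothetical simple realization of the six-vertex AT-subgraph $A[\{v,w_a,w_b,w_c,w_d,w_e\}]$ (where adjacent edges genuinely do not cross), concluding that this six-vertex subgraph is not simply realizable; that suffices for Theorem~\ref{veta_konecna_charakterizace}, but it requires explicitly invoking six-vertex realizability as a standing assumption and yields only a six-vertex witness, whereas the paper's two-realization comparison produces a five-vertex one.
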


\begin{proof}
Let $d,e\in(b,c)$ such that $vw_d\in F_b(b,c)$ and $vw_e\in F_c(b,c)$. Observation~\ref{obs_b_c_disjoint} implies that $d\neq e$. We claim that if $e<d$ then $A[\{v,\allowbreak w_a,\allowbreak w_b,\allowbreak w_c,\allowbreak w_d,\allowbreak w_e\}]$ is not simply realizable. Suppose, for a contradiction, that $T$ is a simple realization of $A[\{v,\allowbreak w_a,\allowbreak w_b,\allowbreak w_c,\allowbreak w_d,\allowbreak w_e\}]$ where $v$ is incident with the outer face and the rotation of $v$ is $(w_a,\allowbreak w_b,\allowbreak w_e,\allowbreak w_d,\allowbreak w_c)$. See Figure~\ref{obr_4_4_I}, left. By Observation~\ref{obs_b_c_disjoint}, the edges $w_aw_b$ and $vw_e$ do not cross in $T$. The edge $w_aw_d$ is drawn inside the triangle $vw_aw_b$ and the edge $vw_e$ is drawn outside the triangle $vw_aw_b$, thus $w_aw_d$ does not cross $vw_e$. By Observation~\ref{obs_b_c_disjoint} the edges $w_aw_c$ and $vw_d$ do not cross, and so the edge $w_aw_d$ is drawn inside the triangle $vw_aw_c$. Within this triangle, a portion of the edge $vw_e$ separates $w_a$ from $w_d$, and therefore $w_aw_d$ and $vw_e$ are forced to cross; a contradiction.
\end{proof}

\begin{figure}
\begin{center}
\epsfbox{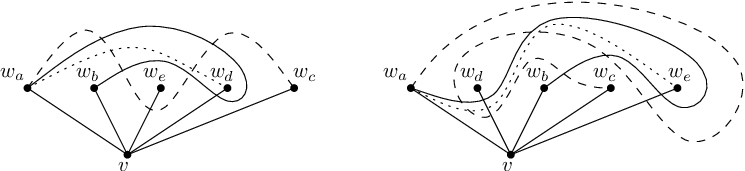}
\end{center}
\caption{Left: illustration to Observation~\ref{obs_b_c_separated}. Right: illustration to Observation~\ref{obs_a_b__c_a}.}
\label{obr_4_4_I}
\end{figure}

\begin{observation} 
\label{obs_a_b__c_a}
Let $vw_d\in F_b(a,b)\cap F_c(a,b)$ and $vw_e\in F_b(c,a)\cap F_c(c,a)$. Then $vw_d \prec_b vw_e \Leftrightarrow vw_d \prec_c vw_e$. 
\end{observation}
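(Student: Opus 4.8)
The plan is to argue by contradiction, following the template of the proof of Observation~\ref{obs_b_c_separated}. Since $\prec_b$ and $\prec_c$ are total orders on $E_{v,w_aw_b}$ and $E_{v,w_aw_c}$ and both contain $vw_d$ and $vw_e$, a failure of the equivalence means that, say, $vw_d\prec_b vw_e$ while $vw_e\prec_c vw_d$. First I would dispose of the trivial case: if $A[\{v,w_a,w_b,w_c,w_d,w_e\}]$ is not simply realizable, then this $6$-tuple is the desired certificate and we are done. So from now on I would work inside a simple realization $T$ of this six-vertex AT-subgraph (for instance the restriction $D[\{v,w_a,w_b,w_c,w_d,w_e\}]$), in which the spokes $vw_a,vw_d,vw_e$ form a crossing-free tripod, each of $w_aw_b,w_aw_c$ is a simple arc crossing each of $vw_d,vw_e$ exactly once, and, because we are in Case~I, $w_aw_b$ avoids $vw_c$, $w_aw_c$ avoids $vw_b$, and the rotation at $w_a$ is compatible with $(v,w_c,w_b)$.

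The core of the argument is a side-tracking computation on the simple closed curve $C$ formed by the triangle $vw_aw_b$, which bounds two discs. By Observation~\ref{obs_zleva_zprava}, since $a<d<b$ the crossing of $w_aw_b$ with $vw_d$ has rotation $(w_a,w_d,w_b,v)$, and since $e\notin(a,b)$ the crossing of $w_aw_b$ with $vw_e$ has rotation $(w_b,w_e,w_a,v)$; each of $vw_d,vw_e$ meets $C$ only in its single crossing with $\gamma_{a,b}$, so these two crossings, read together with the rotations at them and at $v$, determine on which disc the vertices $w_d,w_e$ (and also $w_c$, since $vw_c$ leaves $v$ on the $w_b$-side and stays off $C$) lie. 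Since $w_aw_c$ avoids $vw_a$ and $vw_b$, the arc $\gamma_{a,c}$ meets $C$ only in crossings with $\gamma_{a,b}$, and the rotation $(v,w_c,w_b)$ at $w_a$ fixes into which disc $\gamma_{a,c}$ first enters. Tracing $\gamma_{a,c}$ from $w_a$ and recording which disc it occupies, the first time it meets $vw_d$ (respectively $vw_e$) is governed by which disc $w_d$ (resp. $w_e$) lies in and by the $\prec_b$-order of the two points where $vw_d$ and $vw_e$ cut $C$ along $\gamma_{a,b}$ (which along $C$ appear in the cyclic order $w_a, X_d, X_e, w_b, v$). I expect this to force the order in which $\gamma_{a,c}$ meets $vw_d$ and $vw_e$ to coincide with the $\prec_b$-order, i.e.\ $vw_d\prec_c vw_e$, contradicting the assumption; in each branch the contradiction is witnessed by the six vertices $v,w_a,w_b,w_c,w_d,w_e$, as required by Theorem~\ref{veta_konecna_charakterizace}.

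The hard part will be the bookkeeping in the middle step: correctly reading off, from the two crossing rotations and the rotations at $v$ and $w_a$, which side of $C$ each of $w_d,w_e,w_c$ lies on, and then checking that the \emph{first} meeting of $\gamma_{a,c}$ with each spoke is genuinely pinned down, so that $\gamma_{a,c}$ cannot double back over $vw_d$ or $vw_e$ and reverse the order before reaching $w_c$. A subtlety here is that at this point we do \emph{not} yet know $\gamma_{a,b}$ and $\gamma_{a,c}$ to be disjoint --- they may cross, and those crossings are exactly where $\gamma_{a,c}$ switches sides of $C$ --- so the case analysis must allow this. If one branch remains ambiguous I would rerun the same argument with the triangle $vw_aw_c$ in place of $vw_aw_b$ (using the corresponding rotations $(w_a,w_d,w_c,v)$ and $(w_c,w_e,w_a,v)$ from Observation~\ref{obs_zleva_zprava}), or split further on the position of $w_c$ relative to $C$, which is determined through $A[\{v,w_a,w_b,w_c\}]$ by the rotation at $w_a$ together with the Case~I fact that $\{w_aw_b,vw_c\}\notin\mathcal{X}$. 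Observation~\ref{obs_c_a} (giving $F_c(a,b)\subseteq F_b(a,b)$ and the analogous containment on $(c,a)$) is what makes the two hypotheses $vw_d\in F_b(a,b)\cap F_c(a,b)$ and $vw_e\in F_b(c,a)\cap F_c(c,a)$ available in the first place, so no further structural input should be needed.
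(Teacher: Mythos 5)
Your overall strategy---assume the equivalence fails, pass to a simple realization $T$ of the six-vertex AT-subgraph, and reach a contradiction by a Jordan-curve tracing argument---is viable, but as written there are two genuine problems. First, the setup is muddled about which drawing you argue in: the parenthetical suggestion that one may take $T=D[\{v,w_a,w_b,w_c,w_d,w_e\}]$ is not available, since $D$ is only the crossing-minimal drawing with prescribed rotation system and homotopy classes, and whether its adjacent edges cross is precisely what Step 3c is in the process of establishing; the realization $T$ has to come from the assumption that $A[\{v,w_a,w_b,w_c,w_d,w_e\}]$ is simply realizable (if it is not, the observation holds vacuously). Second, once you are inside such a $T$, your announced ``subtlety'' that $\gamma_{a,b}$ and $\gamma_{a,c}$ may cross is vacuous: in a simple drawing adjacent edges meet only at their common endpoint. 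Since you hedge the decisive step exactly on this phantom difficulty (``I expect this to force\dots'', ``if one branch remains ambiguous I would rerun\dots''), the bookkeeping that constitutes the entire proof on your route is missing from the proposal; what you have is a plan, not a proof.

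For completeness, your route does close, and more easily than you fear: in $T$ the triangle $vw_aw_b$ is a simple closed curve $C$; by the rotation of $v$, the spokes $vw_c$ and $vw_e$ leave $v$ into the side $S$ of $C$ not entered by $vw_d$, and since $w_aw_b$ misses $vw_c$ and $w_aw_c$ misses $vw_b$ (case I), both $vw_c$ and $w_aw_c$ stay in $\overline{S}$, while $vw_d$ and $vw_e$ each cross $C$ exactly once, on $w_aw_b$. The arc $vw_e\cap\overline{S}$ runs from $v$ to the crossing $X_e$ on $w_aw_b$ and splits $S$ into two parts; if $vw_d\prec_b vw_e$, then $w_a$ and the arc $vw_d\cap\overline{S}$ lie in the part whose boundary contains the piece of $w_aw_b$ between $w_a$ and $X_e$, while $vw_c$ (hence $w_c$) lies in the other part, so $w_aw_c$, which cannot leave $\overline{S}$ and crosses $vw_e$ exactly once, must meet $vw_d$ before $vw_e$, giving $vw_d\prec_c vw_e$; the converse is the same argument with the triangle $vw_aw_c$. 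Note that the paper's own proof avoids opening up the six-vertex realization altogether: it plays the two five-tuples $\{v,w_a,w_b,w_d,w_e\}$ and $\{v,w_a,w_c,w_d,w_e\}$ against each other through the auxiliary edge $w_aw_e$, whose crossing status with $vw_d$ is recorded once in $\mathcal{X}$ yet is forced to opposite values by simple realizations of the two five-tuples---a shorter argument, though your direct tracing makes the geometry behind the paper's figure explicit.
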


\begin{proof}
Suppose that $vw_d \prec_b vw_e$ and $vw_e \prec_c vw_d$. We claim that $A[\{v,\allowbreak w_a,\allowbreak w_b,\allowbreak w_c,\allowbreak w_d,\allowbreak w_e\}]$ is not simply realizable.  Suppose, for a contradiction, that $T$ is a simple realization of $A[\{v,\allowbreak w_a,\allowbreak w_b,\allowbreak w_c,\allowbreak w_d,\allowbreak w_e\}]$ where $v$ is incident with the outer face and the rotation of $v$ is $(w_a,\allowbreak w_d,\allowbreak w_b,\allowbreak w_c,\allowbreak w_e)$. See Figure~\ref{obr_4_4_I}, right. The edge $w_aw_e$ must be drawn inside the triangle $vw_aw_b$ in $T$. Since a portion of $vw_d$ separates $w_a$ from $w_e$ within this triangle, the edges $w_aw_e$ and $vw_d$ cross. 
The edge $w_aw_e$ is also drawn inside the triangle $vw_aw_c$. The edge $vw_d$ separates the triangle $vw_aw_c$ into two regions but does not separate $w_a$ from $w_e$, and therefore $w_aw_e$ and $vw_d$ cannot cross; a contradiction.
\end{proof}

\begin{observation} 
\label{obs_a_b__b_c}
Let $vw_d\in F_c(a,b)$ and $vw_e\in F_b(b,c)$. Then $vw_d \prec_b vw_e$. Similarly, if $vw_d\in F_b(c,a)$ and $vw_e\in F_c(b,c)$, then $vw_d \prec_c vw_e$.
\end{observation}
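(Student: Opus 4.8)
The plan is to argue exactly as in the preceding observations: assume the claimed order fails, and exhibit a six-vertex AT-subgraph $A[\{v,w_a,w_b,w_c,w_d,w_e\}]$ that is not simply realizable, contradicting the standing hypothesis that all five- and six-vertex AT-subgraphs of $A$ are simply realizable. I would prove the first statement and obtain the second by the symmetry $b\leftrightarrow c$ together with the orientation-reversing symmetry that swaps the roles of the cyclic intervals $(a,b)$ and $(c,a)$; this symmetry was already used implicitly to reduce to Case~I, so only the first half needs a genuine argument.

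So suppose $vw_d\in F_c(a,b)$ and $vw_e\in F_b(b,c)$ but $vw_e\prec_b vw_d$; note that by Case~I we are in the situation where the rotation of $w_a$ is compatible with $(v,w_c,w_b)$, and $vw_d$ is crossed by $w_aw_c$ (not necessarily by $w_aw_b$), while $vw_e$ is crossed by $w_aw_b$. First I would fix a simple realization $T$ of $A[\{v,w_a,w_b,w_d,w_e\}]$ and read off from it, using Observations~\ref{obs_zleva_zprava} and~\ref{obs_poporade} and Proposition~\ref{prop_poradi_na_hvezde}(2), the forced local picture: since $d\in(a,b)$ and $e\in(b,c)\subseteq(b,a)$, the crossing of $w_aw_b$ with $vw_e$ (if present) has rotation $(w_b,w_e,w_a,v)$, and because we are assuming $vw_e\prec_b vw_d$ the arc of $w_aw_b$ enters the region bounded by $vw_a$, $vw_e$, and a piece of $w_aw_b$ on a side that forces a crossing of $w_aw_c$ with $vw_e$ — or, dually, forces the rotation of $w_a$ in $T[\{v,w_a,w_b,w_e\}]$ to be compatible with $(w_c,w_e,w_b,v)$, i.e.\ $w_c$ lies on the ``$e$-side'' of $w_aw_b$ at $w_a$. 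Then I would bring in $w_aw_c$ and $w_d$: in any simple realization $T'$ of $A[\{v,w_a,w_c,w_d,w_e\}]$, the edge $w_aw_c$ must cross $vw_d$ (since $vw_d\in F_c(a,b)$) but must not cross $vw_e$ (since $vw_e\notin E_{v,w_aw_c}$ — here one needs to check that $\{w_aw_c,vw_e\}\notin\mathcal X$, which follows because $vw_e\in F_b(b,c)$ and $F_b(b,c)\cap F_c(b,c)=\emptyset$ by Observation~\ref{obs_b_c_disjoint}). Combining the constraint on the rotation of $w_a$ derived from $T$ with the crossing pattern of $w_aw_c$ forced in $T'$, the edge $w_aw_c$ is trapped: it leaves $w_a$ on the $w_e$-side, must avoid $vw_e$, yet must reach across $vw_d$, which is impossible without either crossing $vw_e$ or crossing $vw_a$ or violating its own rotation. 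This yields a non-realizable six-vertex induced AT-subgraph, the desired contradiction; a figure analogous to Figure~\ref{obr_4_4_I} would make the trapping transparent.

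The main obstacle I anticipate is purely bookkeeping: pinning down precisely which of the optional crossings $\{w_aw_b,vw_e\}$, $\{w_aw_c,vw_d\}$, etc.\ are actually present, and then tracking on which side of each triangle $vw_aw_b$, $vw_aw_c$ the relevant pieces lie, so that ``trapped'' becomes a rigorous statement rather than a picture. I would handle this the same way the paper does elsewhere — by invoking Observations~\ref{obs_zleva_zprava} and~\ref{obs_poporade} to fix rotations of crossings, invoking Proposition~\ref{prop_poradi_na_hvezde}(2) to fix the order of crossings along an edge of $S(v)$, and then checking the at most two or three resulting subcases against a single schematic figure. No homotopy or crossing-number machinery from Step~3 is needed here; the argument lives entirely at the level of the extended rotation system and the realizability of constant-size AT-subgraphs, exactly like Observations~\ref{obs_c_a}--\ref{obs_a_b__c_a}.
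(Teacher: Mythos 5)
Your overall strategy (negate the conclusion, exhibit a non-realizable five- or six-vertex AT-subgraph, argue via forced rotations and forced crossings) is the right kind of argument, but the concrete claims you base it on are off in ways that break the proof. First, a preliminary slip: you say $vw_d$ is crossed by $w_aw_c$ ``not necessarily by $w_aw_b$'', yet you then negate $vw_d\prec_b vw_e$, which only makes sense if $vw_d\in E_{v,w_aw_b}$; this is exactly Observation~\ref{obs_c_a} ($F_c(a,b)\subseteq F_b(a,b)$), and the paper's proof opens with it. Second, and more seriously, the rotation you read off from the $5$-tuple $\{v,w_a,w_b,w_d,w_e\}$ is wrong: under the assumption $vw_e\prec_b vw_d$ the rotation of $w_a$ is forced to be compatible with $(w_c,w_b,w_e,v)$, not $(w_c,w_e,w_b,v)$. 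Moreover that $5$-tuple does not contain $w_c$, so it cannot ``force a crossing of $w_aw_c$ with $vw_e$'' — and such a crossing is excluded anyway by Observation~\ref{obs_b_c_disjoint}, which you yourself invoke. With the correct rotation your final ``trapping'' claim does not follow at the level of the $5$-tuple $\{v,w_a,w_c,w_d,w_e\}$: that AT-subgraph \emph{is} simply realizable with $w_aw_c$ crossing $vw_d$ and then passing around $w_e$ to reach $w_c$, so nothing is trapped there. The paper's contradiction is obtained differently, via the auxiliary edge $w_aw_e$: realizability of $\{v,w_a,w_b,w_d,w_e\}$ forces $w_aw_e$ and $vw_d$ \emph{not} to cross (and fixes the rotation of $w_a$), while realizability of $\{v,w_a,w_c,w_d,w_e\}$ then forces them \emph{to} cross.

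A trapping argument in your spirit can in fact be made rigorous, but only inside the $6$-tuple and without the rotation-of-$w_a$ detour: in a hypothetical simple realization of $A[\{v,w_a,w_b,w_c,w_d,w_e\}]$, Proposition~\ref{prop_poradi_na_hvezde}(2) transfers the assumed order, so the portion of $w_aw_b$ from $w_a$ to its crossing with $vw_e$ avoids $vw_d$; together with $vw_a$ and the piece of $vw_e$ back to $v$ it forms a simple closed curve that also avoids $vw_c$ (case I) and, by the rotation of $v$, separates $w_c$ from $vw_d\setminus\{v\}$. Since $w_aw_c$ may cross none of the three arcs of this curve (adjacency for $vw_a$ and $w_aw_b$, Observation~\ref{obs_b_c_disjoint} for $vw_e$), it cannot both cross $vw_d$ and reach $w_c$. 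This separating-curve step is the missing idea; as written, your proposal rests on an incorrect rotation claim and on a $5$-tuple that cannot carry the conclusion you want, so it has a genuine gap rather than being a variant of the paper's proof.
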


\begin{proof}
We show the first part, the second part follows by symmetry. Observation~\ref{obs_c_a} implies that $vw_d\in F_b(a,b)$ and Observation~\ref{obs_b_c_disjoint} implies that $vw_e\notin F_c(b,c)$. Suppose that $vw_e \prec_b vw_d$. We claim that $A[\{v,\allowbreak w_a,\allowbreak w_b,\allowbreak w_c,\allowbreak w_d,\allowbreak w_e\}]$ is not simply realizable. Suppose, for a contradiction, that $T$ is a simple realization of $A[\{v,\allowbreak w_a,\allowbreak w_b,\allowbreak w_c,\allowbreak w_d,\allowbreak w_e\}]$  where $v$ is incident with the outer face and the rotation of $v$ is $(w_a,\allowbreak w_d,\allowbreak w_b,\allowbreak w_e,\allowbreak w_c)$. See Figure~\ref{obr_4_5_I}, left. The edge $w_aw_e$ is drawn inside the triangle $vw_aw_b$ in $T$. The edge $vw_d$ separates the triangle $vw_aw_b$ into two regions but does not separate $w_a$ from $w_e$, thus $w_aw_e$ and $vw_d$ do not cross.
The edge $w_aw_e$ is also drawn inside the triangle $vw_aw_c$. Within this triangle, the edge $vw_d$ separates $w_a$ from $w_e$, and therefore $w_aw_e$ and $vw_d$ cross; a contradiction.
\end{proof}

\begin{figure}
\begin{center}
\epsfbox{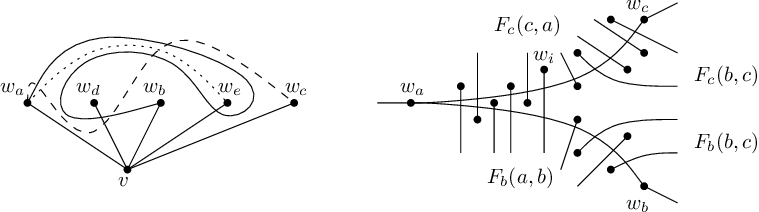}
\end{center}
\caption{Left: Illustration to Observation~\ref{obs_a_b__b_c}. Right: A drawing of the edges $w_aw_b$, $w_aw_c$ and parts of edges of $S(v)$ in case I), where $w_aw_b$ and $w_aw_c$ do not cross. The rotation of $v$ is compatible with the counterclockwise cyclic order of the parts of the edges drawn.}
\label{obr_4_5_I}
\end{figure}

Assuming Observations~\ref{obs_zleva_zprava} and~\ref{obs_poporade}, we show that Observations~\ref{obs_c_a}--\ref{obs_a_b__b_c} imply that ${\rm cr}(w_aw_b,\allowbreak w_aw_c)=0$. Refer to Figure~\ref{obr_4_5_I}, right. Start with drawing the edges $w_aw_b$ and $w_aw_c$ without crossing.
Observations~\ref{obs_b_c_disjoint} and~\ref{obs_a_b__c_a} imply that there is a total order $\prec$ on $E_{v,w_aw_b} \cup E_{v,w_aw_c}$ that is a common extension of $\prec_b$ and $\prec_c$. Let $vw_i$ be the $\prec$-largest element of $F_b(c,a) \cup F_c(a,b)$. Observation~\ref{obs_a_b__b_c} implies that all edges $vw_j$ from $F_b(b,c) \cup F_c(b,c)$ satisfy $vw_i \prec vw_j$. Observation~\ref{obs_c_a} implies that we can draw the edges $vw_j$ with $vw_j \preceq vw_i$ like in the figure. Observations~\ref{obs_b_c_disjoint},~\ref{obs_b_c_separated} and~\ref{obs_a_b__b_c}, imply that we can draw the edges $vw_j$ with $vw_i \prec vw_j$ like in the figure. The remaining edges of $S(v)$ can be drawn easily. In this way we obtain a simple drawing with noncrossing representatives of the homotopy classes of $w_aw_b$ and $w_aw_c$.


\paragraph{II) $w_aw_b$ does not cross $vw_c$ and $w_aw_c$ crosses $vw_b$.} In this case, the rotation of $w_a$ is compatible with $(v,w_b,w_c)$. Let $x$ be the crossing of $w_aw_c$ and $vw_b$. For each of the three cyclic intervals $(i,j)$ with endpoints $a,b$ or $c$, let $F_{w_ax}(i,j)$ be the set of edges $vw_k$ such that the part $w_ax$ of $w_aw_c$ crosses $vw_k$ and $k\in (i,j)$. Similarly, let $F_{xw_c}(i,j)$ be the set of edges $vw_k$ such that the part $xw_c$ of $w_aw_c$ crosses $vw_k$ and $k\in (i,j)$. Clearly, $F_c(i,j)=F_{w_ax}(i,j) \cup F_{xw_c}(i,j)$. We observe the following conditions.

\begin{observation} 
\label{obs_II_a_b}
We have $F_b(a,b) \subseteq F_{w_ax}(a,b)$.
\end{observation}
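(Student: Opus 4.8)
The plan is to imitate the proof of Observation~\ref{obs_c_a}: take an edge $vw_d$ with $d\in(a,b)$ that is crossed by $w_aw_b$, pass to a simple realization $T$ of the five-vertex induced AT-graph $A[\{v,w_a,w_b,w_c,w_d\}]$, and show that in $T$ the initial part $w_ax$ of $w_aw_c$ is forced to cross $vw_d$. Since this is exactly the condition defining membership in $F_{w_ax}(a,b)$, and $d\in(a,b)$ together with ``$w_aw_b$ crosses $vw_d$'' is exactly the condition defining membership in $F_b(a,b)$, this proves the inclusion.

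To locate the forced crossing I would work with the Jordan curve bounded by the triangle $\tau=vw_aw_b$. Because $T$ is a simple complete topological graph, the three edges of $\tau$ are pairwise noncrossing, so $\tau$ is a simple closed curve; let $R$ be the closed disc it bounds that does \emph{not} contain $w_c$. This is well defined: the edge $vw_c$ meets $\tau$ nowhere (it shares the vertex $v$ with $vw_a$ and $vw_b$, and by the case~II hypothesis it does not cross $w_aw_b$), so $vw_c$, and hence $w_c$, lies entirely in the exterior of $R$. Next I would pin down on which side of $\tau$ each relevant arc starts. At $v$ the rotation restricted to $\{w_a,w_d,w_b,w_c\}$ is $(w_a,w_d,w_b,w_c)$ since $a<d<b<c$, so the two sectors cut out at $v$ by the arcs of $\tau$ along $vw_a$ and $vw_b$ contain, respectively, the direction of $vw_d$ and the direction of $vw_c$; as $vw_c$ enters the exterior, $vw_d$ enters the interior $R$, and since $vw_d$ meets $\tau$ only in its single crossing $z$ with $w_aw_b$, the subarc $vz$ of $vw_d$ lies in $R$. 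Likewise, $w_aw_c$ meets $\tau$ only at $x$ (its single crossing with $vw_b$); the subarc $xw_c$ ends at $w_c$ in the exterior, so the complementary subarc $w_ax$ lies in $R$. Finally, reading the cyclic order of the boundary points $v,w_a,z,w_b,x$ along $\tau$, the chords $\{w_a,x\}$ and $\{v,z\}$ interleave, so the arc $vz$ separates $w_a$ from $x$ inside the disc $R$ and therefore $w_ax$ must cross $vz$. Hence $w_aw_c$ crosses $vw_d$, and the crossing lies on $w_ax$, as claimed.

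The one delicate point — and the step I would check most carefully — is the bookkeeping of ``inside versus outside of $\tau$'' at the vertices $v$ and $w_a$: one must use the rotation $(v,w_b,w_c)$ at $w_a$, the two case~II hypotheses (that $w_aw_b$ does not cross $vw_c$ and that $w_aw_c$ crosses $vw_b$), and the simplicity of $T$, to be sure that $w_ax$ enters $R$ and that $vw_d$ leaves $v$ into $R$; after that, the conclusion is just the Jordan curve theorem plus the interleaving criterion for arcs in a disc. The symmetric statements needed in the remainder of the section should follow by the same argument after permuting the roles of the cyclic intervals $(a,b),(b,c),(c,a)$ and of the two subarcs $w_ax$ and $xw_c$.
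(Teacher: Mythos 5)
Your proof is correct and follows essentially the same route as the paper: the paper's proof also passes to a simple realization $T$ of $A[\{v,w_a,w_b,w_c,w_d\}]$ and asserts (with a figure, "Clearly\dots") that $w_ax$ is forced to cross $vw_d$ there. You have simply made the "clearly" explicit via the Jordan curve bounded by the triangle $vw_aw_b$ and the interleaving of the chords $vz$ and $w_ax$, which is exactly the intended justification.
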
 

\begin{proof}
Let $d\in (a,b)$ such that $w_aw_b$ crosses $vw_d$. Let $T$ be a simple realization of $A[\{v,\allowbreak w_a,\allowbreak w_b,\allowbreak w_c,\allowbreak w_d\}]$ where $v$ is incident with the outer face. See Figure~\ref{obr_4_6_II}, left. Since the curve $w_ax$ lies inside the triangle $vw_aw_b$ and $vw_d$ separates $w_a$ from $x$ within this triangle, the curve $w_ax$ is forced to cross $vw_d$ in $T$.
\end{proof}

\begin{figure}
\begin{center}
\epsfbox{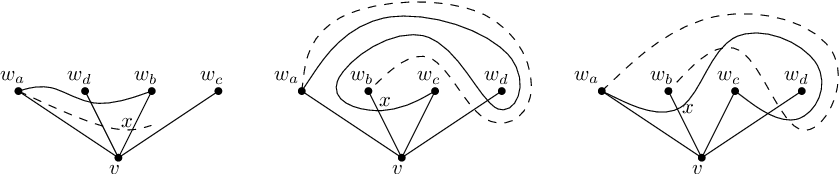}
\end{center}
\caption{Left: Illustration to Observation~\ref{obs_II_a_b}. Middle: Illustration to Observation~\ref{obs_II_c_a}. Right: Illustration to Observation~\ref{obs_II_c_a__x_w_c}.}
\label{obr_4_6_II}
\end{figure}

\begin{observation} 
\label{obs_II_c_a}
We have $F_{w_ax}(c,a) \subseteq F_b(c,a)$.
\end{observation}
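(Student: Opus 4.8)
The statement to prove is $F_{w_ax}(c,a) \subseteq F_b(c,a)$, i.e., if $d \in (c,a)$ and the initial piece $w_ax$ of $w_aw_c$ (the part from $w_a$ to the crossing $x$ with $vw_b$) crosses $vw_d$, then $w_aw_b$ also crosses $vw_d$. The plan is to mimic the proof of Observation~\ref{obs_II_a_b}: pass to a simple realization $T$ of the five-vertex induced AT-subgraph $A[\{v,w_a,w_b,w_c,w_d\}]$ — which exists since every AT-subgraph of $A$ on at most five vertices is simply realizable — and argue that in $T$ the crossing of $w_aw_b$ with $vw_d$ is forced by the combinatorial data we already know. The relevant data in $T$ are: the rotation of $v$ is compatible with $(\dots,w_c,\dots,w_d,\dots,w_a,\dots,w_b,\dots)$ restricted to these four spokes (from the cyclic interval assumptions $d\in(c,a)$ and $a<b$), the rotation of $w_a$ is compatible with $(v,w_b,w_c)$ (this is the defining hypothesis of Case II), and $w_aw_c$ crosses $vw_b$ with the first sub-arc $w_ax$ crossing $vw_d$ (that is what $vw_d\in F_{w_ax}(c,a)$ means, and it is determined by the AT-graph via Proposition~\ref{prop_poradi_na_hvezde}(2) applied to the order of crossings of $vw_b$ and the relevant spokes along $w_aw_c$). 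Refer to Figure~\ref{obr_4_6_II}, middle.

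The key step is the forcing argument inside $T$. Consider the simple closed curve $\Gamma$ formed by the spoke $vw_d$ together with the initial arc $w_ax'$ of $w_aw_c$ up to its crossing $x'$ with $vw_d$, closed up by... more precisely, look at the region bounded by $vw_a$, the arc of $w_aw_c$ from $w_a$ to its crossing with $vw_b$, and the arc of $vw_b$; since $w_aw_c$ crosses $vw_b$ exactly once (simple drawing) and $vw_a, vw_b$ share only the endpoint $v$, these bound a disc $\Delta$. By the rotation of $w_a$ being $(v,w_b,w_c)$, the edge $w_aw_b$ leaves $w_a$ into $\Delta$. Since $d\in(c,a)$, the spoke $vw_d$ leaves $v$ on the side of $vw_a$ away from $\Delta$, so the endpoint $w_d$ lies outside $\Delta$, and $vw_d$ enters $\Delta$ only by crossing $w_ax$ (it cannot cross $vw_a$, being adjacent at $v$, nor $vw_b$, being adjacent at $v$). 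Now the edge $w_aw_b$ must exit $\Delta$ to reach $w_b\in\partial\Delta$, but $w_b$ is an endpoint of the boundary, so $w_aw_b$ stays inside $\Delta$ near $w_b$; to decide whether $w_aw_b$ crosses $vw_d$ one tracks that $vw_d$, having entered $\Delta$ through $w_ax$, must leave it, and the only available boundary arc it can cross to leave is the arc of $w_aw_c$ or an arc of $w_aw_b$ — here the precise case analysis, using that $vw_d$ and $w_aw_b$ cannot both avoid each other given the cyclic position of $w_d$, yields the crossing. I would finalize this by checking the one or two sub-configurations against the figure rather than in prose.

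The main obstacle I anticipate is not any deep idea but making the planarity/forcing argument fully rigorous from the combinatorial data alone: one must be careful that the arc $w_ax$ (rather than all of $w_aw_c$) is the relevant boundary piece, that $vw_d$ genuinely separates $w_b$ from the part of $\Delta$ where $w_aw_b$ could ``escape,'' and that no other crossing among these five vertices can be invoked to avoid the conclusion. In practice this is exactly the kind of bounded case check that Figure~\ref{obr_4_6_II} is designed to settle, and since $A[\{v,w_a,w_b,w_c,w_d\}]$ ranges over finitely many simply realizable five-vertex AT-graphs, the argument can be completed by inspection; the write-up should simply state that $w_ax$ enters the disc bounded by $vw_a$, $vw_b$ and $w_ax$ from the wrong side to avoid $vw_d$ whenever $d\in(c,a)$, hence $w_aw_b$ is forced across $vw_d$ as well.
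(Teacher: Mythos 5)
Your overall route is the same as the paper's: pass to a simple realization $T$ of the five-vertex AT-subgraph $A[\{v,w_a,w_b,w_c,w_d\}]$ (whose rotations and the order of crossings along $w_aw_c$ are forced by Proposition~\ref{prop_poradi_na_hvezde}) and argue that $w_aw_b$ is then forced to cross $vw_d$; the paper does exactly this, citing Figure~\ref{obr_4_6_II}. However, the topological details you assert are not correct as written, and taken literally your argument does not close. First, $w_b$ does not lie on $\partial\Delta$: the boundary of your disc contains only the piece of $vw_b$ between $v$ and the crossing $x$, and $w_b$ lies in the interior region (the part of $vw_b$ beyond $x$ cannot cross $vw_a$, cannot recross $w_aw_c$, so it stays inside). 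Second, the claim that ``$vw_d$ leaves $v$ on the side of $vw_a$ away from $\Delta$'' is false: with the rotation $(w_a,w_b,w_c,w_d)$ at $v$ and $d\in(c,a)$, the spoke $vw_d$ starts at $v$ into the \emph{same} region as the initial piece of $w_aw_b$ at $w_a$, and it leaves $\Delta$ through the arc $w_ax$ at the crossing $x'$ (so $w_d$ is outside, but $vw_d$ exits rather than enters through $w_ax$). Because the start of $w_aw_b$, the vertex $w_b$, and the initial piece of $vw_d$ all lie on the same side of $\partial\Delta$, your closed curve by itself forces nothing; the sentence ``$vw_d$ \dots must leave it, and the only available boundary arc it can cross \dots is an arc of $w_aw_b$'' is also confused, since $w_aw_b$ is not part of $\partial\Delta$.

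The missing idea is to use the crossing $x'$ of $w_aw_c$ with $vw_d$ itself. Note that all three edges bounding $\Delta$, namely $vw_a$, $w_aw_c$ and $vw_b$, are adjacent to $w_aw_b$, so $w_aw_b$ can never leave the closed disc $\Delta$. The sub-arc of $vw_d$ from $v$ to $x'$ is a chord of $\Delta$; its endpoints split $\partial\Delta$ into the arc containing $w_a$ and the arc containing $x$. The edge $w_aw_b$ starts at $w_a$ on the first side, while $w_b$ is reached by $vw_b$ beyond $x$ on the second side (and $vw_b$ cannot cross $vw_d$, being adjacent at $v$). Hence $w_aw_b$ must cross the chord, i.e.\ cross $vw_d$, and the crossing even lies between $v$ and $x'$. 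Equivalently, one can apply the Jordan-curve argument directly to the simple closed curve formed by $vw_a$, the arc of $w_aw_c$ from $w_a$ to $x'$, and the arc of $vw_d$ from $x'$ to $v$: the rotations at $v$ and $w_a$ place the start of $w_aw_b$ and the vertex $w_b$ on opposite sides, and the only boundary piece $w_aw_b$ may cross is the sub-arc of $vw_d$. With either of these fixes your proof coincides with what the paper's figure encodes; since $d\in(c,a)$, the forced crossing gives $vw_d\in F_b(c,a)$ as required.
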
 

\begin{proof}
Let $d\in (c,a)$ such that $w_ax$ crosses $vw_d$. Let $T$ be a simple realization of $A[\{v,\allowbreak w_a,\allowbreak w_b,\allowbreak w_c,\allowbreak w_d\}]$ where $v$ is incident with the outer face. See Figure~\ref{obr_4_6_II}, middle. Since $w_aw_b$ cannot cross the triangle $vw_aw_c$, and the edge $vw_d$ separates $w_a$ from $w_b$ outside the triangle, the edge $w_aw_b$ is forced to cross $vw_d$ in $T$.
\end{proof}

\begin{observation} 
\label{obs_II_c_a__x_w_c}
Let $d,e\in(c,a)$ such that $vw_d\in F_{xw_c}(c,a)$ and $vw_e\in F_{b}(c,a)$. Then $d\neq e$ and $(d,e,a)$ is compatible with $(1,2\dots,n-1)$.
\end{observation}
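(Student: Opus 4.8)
The plan is to argue by contradiction, exactly in the style of the preceding observations: we assume the claimed configuration fails and exhibit a five-vertex (or, if necessary, six-vertex) AT-subgraph of $A$ that is not simply realizable, contradicting the standing hypothesis that all such subgraphs are realizable. Concretely, suppose $vw_d\in F_{xw_c}(c,a)$ and $vw_e\in F_{b}(c,a)$, so that the sub-arc $xw_c$ of $w_aw_c$ crosses $vw_d$ and the edge $w_aw_b$ crosses $vw_e$, with $d,e$ in the cyclic interval $(c,a)$. We must rule out both $d=e$ and the case where $(d,e,a)$ is \emph{not} compatible with $(1,2,\dots,n-1)$, i.e.\ where $e$ precedes $d$ in the cyclic order on $(c,a)$.

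First I would handle $d=e$. If $d=e$, then $vw_d$ is crossed both by $w_aw_b$ and by the arc $xw_c$ of $w_aw_c$. I would pass to $T$, a simple realization of $A[\{v,w_a,w_b,w_c,w_d\}]$, and use Observation~\ref{obs_zleva_zprava} together with the case~II hypotheses (rotation of $w_a$ compatible with $(v,w_b,w_c)$, and $x$ the crossing of $w_aw_c$ with $vw_b$) to pin down the combinatorial picture near $v$ and $w_a$. The contradiction should come from a bigon-type obstruction: the closed curve formed by $vw_a$, the arc $w_ax$ of $w_aw_c$, and the arc of $vw_b$ from $v$ to $x$ separates the plane, and tracking where $w_aw_b$ and the arc $xw_c$ must emerge forces one of the edges among $w_aw_b$, $w_aw_c$, $vw_d$ to pick up a second crossing with another edge of the five-vertex subgraph, so $A[\{v,w_a,w_b,w_c,w_d\}]$ is not simply realizable — this is precisely the situation depicted in Figure~\ref{obr_4_6_II}, right.

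Next I would treat the ordering claim. Assuming $d\neq e$ but $(d,e,a)$ \emph{not} compatible with $(1,2,\dots,n-1)$ — that is, within the cyclic interval $(c,a)$ the vertex $w_e$ comes before $w_d$ — I would again take a simple realization $T$ of the relevant induced subgraph. Here I expect to need the six vertices $\{v,w_a,w_b,w_c,w_d,w_e\}$: first, realizability of $A[\{v,w_a,w_b,w_d,w_e\}]$ together with Observation~\ref{obs_zleva_zprava} and Observation~\ref{obs_poporade} forces the relative order of the crossings of $w_aw_b$ with $vw_d$ and $vw_e$ along $w_aw_b$ and forces the rotation of $w_a$ to be compatible with some explicit cyclic permutation (of the form $(w_c,w_b,\dots,v)$); then realizability of $A[\{v,w_a,w_c,w_d,w_e\}]$ forces an incompatible conclusion about whether the arc $xw_c$ crosses $vw_e$ or about the order of crossings along $w_aw_c$. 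The contradiction is obtained just as in Observations~\ref{obs_b_c_separated},~\ref{obs_a_b__c_a} and~\ref{obs_a_b__b_c}.

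The main obstacle I anticipate is bookkeeping rather than conceptual depth: because $w_aw_c$ is split by $x$ into two arcs $w_ax$ and $xw_c$ lying on opposite sides of $vw_b$, one must be careful about which side of the separating closed curves each relevant arc enters and exits, and about which of $vw_d,vw_e$ is crossed by which arc. Getting the right five- or six-vertex subgraph and the right reference drawing (matching Figure~\ref{obr_4_6_II}, right) is the delicate part; once the picture is fixed, each individual non-realizability claim reduces to the kind of planarity/bigon argument already used repeatedly above, applied to a topological graph on at most six vertices, which then certifies that $A$ has a non-simply-realizable AT-subgraph on at most six vertices.
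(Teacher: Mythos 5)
Your overall strategy coincides with the paper's: for $d=e$ the certificate is the $5$-tuple $\{v,w_a,w_b,w_c,w_d\}$ (the paper's proof of this part is exactly the configuration in Figure~\ref{obr_4_6_II}, right), and for the ordering claim the certificate is the $6$-tuple $\{v,w_a,w_b,w_c,w_d,w_e\}$ analysed through the two $5$-tuples $\{v,w_a,w_c,w_d,w_e\}$ and $\{v,w_a,w_b,w_d,w_e\}$. However, the steps that carry the actual content are left open or misdirected. You never derive the forcing statements; phrases like ``forces an incompatible conclusion about whether the arc $xw_c$ crosses $vw_e$ or about the order of crossings along $w_aw_c$'' are precisely what has to be proved, and you do not commit to which quantity is forced in two incompatible ways, nor how. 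Moreover, the one concrete mechanism you do propose for the ordering case cannot work as stated: you want $A[\{v,w_a,w_b,w_d,w_e\}]$ to determine ``the relative order of the crossings of $w_aw_b$ with $vw_d$ and $vw_e$'', but $vw_d$ is only assumed to lie in $F_{xw_c}(c,a)$, so $w_aw_b$ need not cross $vw_d$ at all; in fact one may assume $vw_d\notin F_b(c,a)$, since otherwise the first part applied with $e=d$ already shows that $A[\{v,w_a,w_b,w_c,w_d\}]$ is not simply realizable --- a reduction the paper makes explicitly and you omit. (Your template rotation $(w_c,w_b,\dots,v)$ is also the case~I pattern; in case~II the rotation of $w_a$ is compatible with $(v,w_b,w_c)$, and the derived rotations have the form $(w_b,w_c,\dots,v)$.)

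The paper's actual contradiction differs in detail from what you sketch: assuming $d\neq e$, $(e,d,a)$ compatible with $(1,2,\dots,n-1)$ and $vw_d\notin F_b(c,a)$, simple realizability of $A[\{v,w_a,w_c,w_d,w_e\}]$ forces that $w_aw_d$ and $vw_e$ do \emph{not} cross and that the rotation of $w_a$ is compatible with $(w_b,w_c,w_d,v)$, while simple realizability of $A[\{v,w_a,w_b,w_d,w_e\}]$ then forces $w_aw_d$ and $vw_e$ to cross (Figure~\ref{obr_4_7_II}, left). So the contradictory quantity is the crossing status of the auxiliary pair $\{w_aw_d,vw_e\}$, not an order of crossings along $w_aw_b$ or $w_aw_c$. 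Until you identify such a pair and actually derive the two opposite conclusions from the two $5$-tuples (together with the case~II rotation at $w_a$), the proposal remains a plan rather than a proof; the $d=e$ part, by contrast, is essentially the paper's figure-based argument and is acceptable once the separation argument you outline is written out.
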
 

\begin{proof}
First we show that $d\neq e$. Suppose that $T$ is a simple realization of $A[\{v,\allowbreak w_a,\allowbreak w_b,\allowbreak w_c,\allowbreak w_d\}]$ where $v$ is incident with the outer face. See Figure~\ref{obr_4_6_II}, right. Since $w_b$ is outside the triangle $vw_aw_c$, and the edge $vw_d$ separates the region outside the triangle into two regions but does not separate $w_a$ from $w_b$, the edge $w_aw_b$ cannot cross $vw_d$. Therefore $d\neq e$.

Suppose that $(e,d,a)$ is compatible with $(1,2\dots,n-1)$. By the first part we may assume that $vw_d\notin F_{b}(c,a)$ and $vw_e\notin F_{xw_c}(c,a)$. We claim that $A[\{v,\allowbreak w_a,\allowbreak w_b,\allowbreak w_c,\allowbreak w_d,\allowbreak w_e\}]$ is not simply realizable. Suppose, for a contradiction, that $T$ is a simple realization of $A[\{v,\allowbreak w_a,\allowbreak w_b,\allowbreak w_c,\allowbreak w_d,\allowbreak w_e\}]$ where $v$ is incident with the outer face and the rotation of $v$ is $(w_a,\allowbreak w_b,\allowbreak w_c,\allowbreak w_e,\allowbreak w_d)$. See Figure~\ref{obr_4_7_II}, left. Since $w_aw_d$ is inside the triangle $vw_aw_c$ in $T$ and $w_e$ outside the triangle, the edges $w_aw_d$ and $vw_e$ do not cross. 
The curve $xw_d$ must be drawn outside the triangle $vw_aw_b$. However, the edge $vw_e$ separates $x$ from $w_d$ outside the triangle, thus $w_aw_d$ and $vw_e$ are forced to cross; a contradiction.
\end{proof}

\begin{figure}
\begin{center}
\epsfbox{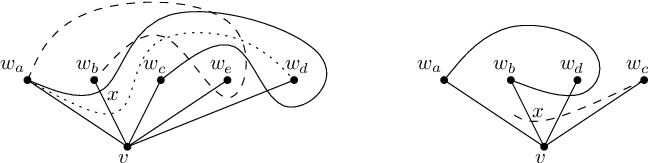}
\end{center}
\caption{Left: Illustration to Observation~\ref{obs_II_c_a__x_w_c}. The edge $w_aw_b$ is forced to cross $vw_d$ and hence also $w_aw_c$. Right: Illustration to Observation~\ref{obs_II_b_c}.}
\label{obr_4_7_II}
\end{figure}

\begin{observation} 
\label{obs_II_b_c}
We have $F_b(b,c) \subseteq F_{xw_c}(b,c)$.
\end{observation}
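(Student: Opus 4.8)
The plan is to follow the template already used for Observations~\ref{obs_II_a_b}--\ref{obs_II_c_a__x_w_c}. Fix a vertex $w_d$ with $d\in(b,c)$ such that $w_aw_b$ crosses $vw_d$; we must show that the sub-arc $xw_c$ of $w_aw_c$ also crosses $vw_d$. Since every five-vertex AT-subgraph of $A$ is simply realizable, we may pass to a simple realization $T$ of $A[\{v,w_a,w_b,w_c,w_d\}]$. In $T$ the order of crossings of any edge with two star edges incident to $v$ is determined by the AT-graph (Proposition~\ref{prop_poradi_na_hvezde}(2)), and whether two edges meeting the star cross is determined by $\mathcal{X}$; hence it suffices to prove that in $T$ the arc $xw_c$ is forced to cross $vw_d$, and the conclusion then carries over to the master drawing.

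In $T$ we may use the following data, all part of the Case II hypotheses together with $a<b<d<c$: the rotation of $v$ restricted to $\{w_a,w_b,w_c,w_d\}$ is $(w_a,w_b,w_d,w_c)$; the rotation of $w_a$ is compatible with $(v,w_b,w_c)$; the edge $w_aw_c$ crosses $vw_b$ exactly once, at $x$; the edge $w_aw_b$ does not cross $vw_c$; and $w_aw_b$ crosses $vw_d$, at a point $z$. The triangle $vw_aw_b$ is a simple closed curve; the rotations at $v$ and $w_a$ place $w_c$ (and the initial direction of $vw_c$) in one of its two discs, say $\Delta_1$, while, since $vw_d$ meets the triangle only at $z$, the point $w_d$ lies in the other disc $\Delta_2$; likewise the only crossing of $w_aw_c$ with the triangle is $x$, so $w_ax\subset\overline{\Delta_2}$ and $xw_c\subset\overline{\Delta_1}$. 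Inside $\Delta_1$ form the simple closed curve $C'$ bounded by $vw_c$, the arc $xw_c$, and the sub-arc of $vw_b$ from $x$ to $v$; it splits $\Delta_1$ into a region $R$ bounded by $vw_c$, $xw_c$ and a segment of $vw_b$, and a complementary region $R'$ whose boundary contains the triangle side $w_aw_b$, hence the point $z$. Reading off the rotation at $v$, the edge $vw_d$ leaves $v$ into $R$. Being adjacent to $vw_a$, $vw_b$ and $vw_c$, the edge $vw_d$ can cross none of them, so to leave $R$ it must cross the arc $xw_c$; and it must leave $R$, since it has to reach $z\in\partial R'$ and then $w_d\in\Delta_2$. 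As $T$ is simple, $vw_d$ and $w_aw_c$ cross at most once, hence exactly once and on $xw_c$. Thus $vw_d\in F_{xw_c}(b,c)$, and since $d$ was arbitrary, $F_b(b,c)\subseteq F_{xw_c}(b,c)$.

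The only delicate point is the bookkeeping of sides: one must check that $w_c$ and $w_d$ fall into opposite discs of the triangle $vw_aw_b$, that $w_ax$ and $xw_c$ lie on the $\Delta_2$- and $\Delta_1$-side respectively, and that $vw_d$ emanates from $v$ into the ``small'' region $R$ cut off by $C'$ rather than into $R'$ or the exterior. All of this is forced by the recorded rotations at $v$ and $w_a$ together with the Case~II crossing pattern, and it is exactly what Figure~\ref{obr_4_7_II}, right, depicts; once the picture is fixed, the conclusion that $xw_c$ is forced to cross $vw_d$ is immediate, so the written proof can be kept as short as those of Observations~\ref{obs_II_a_b} and~\ref{obs_II_c_a}.
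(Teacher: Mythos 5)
Your proof is correct and follows the paper's approach exactly: the paper's proof also takes $d\in(b,c)$ with $vw_d\in F_b(b,c)$, passes to a simple realization $T$ of $A[\{v,w_a,w_b,w_c,w_d\}]$, and concludes (by reference to Figure~\ref{obr_4_7_II}, right) that the arc $xw_c$ is forced to cross $vw_d$ in $T$. You merely make explicit, via the Jordan-curve argument with the triangle $vw_aw_b$ and the region $R$ bounded by $vw_c$, $xw_c$ and part of $vw_b$, the forcing that the paper leaves to the figure, and your transfer of the conclusion from $T$ back to $D$ via Proposition~\ref{prop_poradi_na_hvezde}(2) is the intended justification.
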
 

\begin{proof}
Let $d\in (b,c)$ such that $w_aw_b$ crosses $vw_d$. Let $T$ be a simple realization of $A[\{v,\allowbreak w_a,\allowbreak w_b,\allowbreak w_c,\allowbreak w_d\}]$ where $v$ is incident with the outer face. See Figure~\ref{obr_4_7_II}, right. The curve $xw_c$ must be drawn outside the triangle $vw_aw_b$ in $T$. However, the edge $vw_d$ separates $x$ from $w_c$ outside the triangle, thus $xw_c$ and $vw_d$ are forced to cross in $T$.
\end{proof}

\begin{observation} 
\label{obs_II_a_b__c_a_poradi}
Let $vw_d\in F_b(a,b)\cap F_{w_ax}(a,b)$ and $vw_e\in F_b(c,a)\cap F_{w_ax}(c,a)$. Then $vw_d \prec_b vw_e \Leftrightarrow vw_d \prec_c vw_e$. 
\end{observation}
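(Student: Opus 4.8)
The plan is to mimic the proof of Observation~\ref{obs_a_b__c_a}, arguing by contradiction from a forbidden six‑vertex AT‑subgraph. Suppose the asserted equivalence fails; then, possibly after interchanging the roles of $d$ and $e$ together with those of $\prec_b$ and $\prec_c$, we may assume $vw_d\prec_b vw_e$ while $vw_e\prec_c vw_d$. I claim this forces $A[\{v,w_a,w_b,w_c,w_d,w_e\}]$ to be not simply realizable, which is exactly what we need. If one of the five‑vertex AT‑subgraphs of this six‑vertex set is already not simply realizable, we are done, so we assume from now on that all of them are simply realizable.

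The argument then proceeds by analyzing the two relevant five‑vertex subgraphs. Recall we are in case II, so the rotation of $w_a$ is compatible with $(v,w_b,w_c)$, the edge $w_aw_b$ does not cross $vw_c$, and $w_aw_c$ crosses $vw_b$ at $x$; by hypothesis $vw_d$ crosses both $w_aw_b$ and the initial piece $w_ax$ inside the cyclic interval $(a,b)$, and $vw_e$ crosses both $w_aw_b$ and $w_ax$ inside $(c,a)$. First I would look at $A[\{v,w_a,w_b,w_d,w_e\}]$: here both $vw_d$ and $vw_e$ cross $w_aw_b$, they sit on opposite sides of $w_a$ in the interval structure, and $vw_d\prec_b vw_e$; combining this with the rotation of $w_a$ and Observations~\ref{obs_zleva_zprava} and~\ref{obs_poporade}, the unique simple realization of this five‑vertex AT‑graph (up to reflection) forces $w_aw_e$ to cross $vw_d$ and pins down the rotation of $w_a$ to be compatible with $(w_b,w_c,w_e,v)$. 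Next I would look at $A[\{v,w_a,w_c,w_d,w_e\}]$ with the additional information that $w_aw_c$ is split at $x$ and that, along $w_aw_c$, the edge $vw_e$ is crossed before $vw_d$ (since $vw_e\prec_c vw_d$); the same planarity/rotation reading, now in the simple realization of this five‑vertex AT‑graph, forces $w_aw_e$ and $vw_d$ \emph{not} to cross. Since whether $\{w_aw_e,vw_d\}\in\mathcal{X}$ is a fixed fact about $A$, these two conclusions are contradictory, so the six‑vertex AT‑subgraph cannot be simply realizable. The symmetric case (with $w_aw_d$ and $vw_e$ in place of $w_aw_e$ and $vw_d$) is handled in the same way.

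The main obstacle I expect is the bookkeeping in the second subgraph: because $w_aw_c$ passes through the crossing $x$ with $vw_b$, one must track carefully the relative position along $w_aw_c$ of the crossings of the pieces $w_ax$ and $xw_c$ with $vw_d$ and $vw_e$ in order to see that $vw_e\prec_c vw_d$ genuinely forces the non‑crossing of $w_aw_e$ with $vw_d$ rather than merely being consistent with it. As in the case‑I observations, the cleanest way to verify this is to draw the (essentially unique) simple realization of the relevant five‑vertex subgraph and read off the forced crossing directly, with a figure playing the role that Figure~\ref{obr_4_4_I} (right) plays for Observation~\ref{obs_a_b__c_a}. Once both forced statements are in hand, the contradiction is immediate.
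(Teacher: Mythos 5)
Your core argument is exactly the paper's: assume the equivalence fails, and derive a contradiction by reading off, from the (essentially unique) simple realizations of $A[\{v,w_a,w_b,w_d,w_e\}]$ and $A[\{v,w_a,w_c,w_d,w_e\}]$, contradictory forced conclusions about whether $w_aw_e$ crosses $vw_d$, together with the rotation of $w_a$ being compatible with $(w_b,w_c,w_e,v)$. For the case you treat in detail ($vw_d\prec_b vw_e$ and $vw_e\prec_c vw_d$) your conclusions match the paper's verbatim: the $\{v,w_a,w_b,w_d,w_e\}$ subgraph forces the crossing of $w_aw_e$ with $vw_d$, the $\{v,w_a,w_c,w_d,w_e\}$ subgraph forbids it.

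The flaw is the reduction to a single case. Interchanging $d$ with $e$ together with $\prec_b$ and $\prec_c$ is not a symmetry of the configuration in case II: the hypotheses place $d$ in the cyclic interval $(a,b)$ but $e$ in $(c,a)$, and $b$ and $c$ play asymmetric roles here ($w_aw_c$ crosses $vw_b$ at $x$, $w_aw_b$ does not cross $vw_c$, and the rotation of $w_a$ is compatible with $(v,w_b,w_c)$). Consequently the other failure case, $vw_d\prec_c vw_e$ and $vw_e\prec_b vw_d$, cannot be obtained by that interchange and must be checked on its own; the paper does so (the other half of Figure~\ref{obr_4_8_II}), using the same two five-vertex subgraphs and the same pair $w_aw_e$, $vw_d$, but with the forced conclusions reversed (now $\{v,w_a,w_b,w_d,w_e\}$ forbids the crossing and $\{v,w_a,w_c,w_d,w_e\}$ forces it). Your closing sentence, which proposes to handle the remaining case with the pair $w_aw_d$, $vw_e$ instead, is likewise unverified and not what is needed. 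The missing case does go through by an argument entirely analogous to the one you gave, so the approach is sound, but as written the symmetry step would fail and the second case is left unproved.
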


\begin{proof}
Suppose, for a contradiction, that $vw_e \prec_b vw_d$ and $vw_d \prec_c vw_e$. Let $T$ be a simple realization of $A[\{v,\allowbreak w_a,\allowbreak w_b,\allowbreak w_c,\allowbreak w_d,\allowbreak w_e\}]$ where $v$ is incident with the outer face. See Figure~\ref{obr_4_8_II}, left. The edge $w_aw_e$ is drawn inside the triangle $vw_aw_b$ in $T$. The edge $vw_d$ separates the triangle $vw_aw_b$ into two regions but does not separate $w_a$ from $w_e$, thus the edges $w_aw_e$ and $vw_d$ do not cross. The edge $w_aw_e$ is also drawn inside the triangle $vw_aw_c$. The edge $vw_d$ separates $w_a$ from $w_e$ within the triangle $vw_aw_c$, and hence $w_aw_e$ and $vw_d$ are forced to cross; a contradiction.

Suppose, for a contradiction, that $vw_d \prec_b vw_e$ and $vw_e \prec_c vw_d$. Let $T$ be a simple realization of $A[\{v,\allowbreak w_a,\allowbreak w_b,\allowbreak w_c,\allowbreak w_d,\allowbreak w_e\}]$ where $v$ is incident with the outer face. See Figure~\ref{obr_4_8_II}, right. The edge $w_aw_e$ is drawn inside the triangle $vw_aw_b$ in $T$. The edge $vw_d$ separates $w_a$ from $w_e$ within the triangle $vw_aw_b$, and hence $w_aw_e$ and $vw_d$ cross. The edge $w_aw_e$ is also drawn inside the triangle $vw_aw_c$. The edge $vw_d$ separates the triangle $vw_aw_c$ into two regions but does not separate $w_a$ from $w_e$, thus the edges $w_aw_e$ and $vw_d$ cannot cross; a contradiction.
\end{proof}

\begin{figure}
\begin{center}
\epsfbox{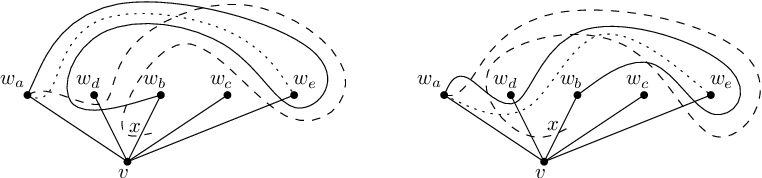}
\end{center}
\caption{Illustrations to Observation~\ref{obs_II_a_b__c_a_poradi}.}
\label{obr_4_8_II}
\end{figure}

\begin{observation} 
\label{obs_II_b_c__xw_c_poradi}
Let $vw_d\in F_b(b,c)\cap F_{xw_c}(b,c)$ and $vw_e\in F_{xw_c}(c,a)$. Then $vw_d \prec_c vw_e$.
\end{observation}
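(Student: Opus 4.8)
The plan is to observe that this claim, unlike the neighbouring observations in case II, requires no analysis of five-vertex AT-subgraphs at all: it follows directly from the way the arc representing $w_aw_c$ is subdivided by its crossing with $vw_b$ in the fixed drawing $D$.

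First I would recall the relevant structure. In $D$ the edge $w_aw_c$ is drawn as the simple curve $\gamma_{a,c}$, and by the hypothesis of case II it meets $vw_b$ in exactly one point, namely the crossing $x$. Hence $x$ splits $\gamma_{a,c}$ into the two sub-arcs $w_ax$ (the one incident with $w_a$) and $xw_c$ (the one incident with $w_c$), and every point of $\gamma_{a,c}\setminus\{x\}$ lies on exactly one of these two sub-arcs.

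Next I would invoke simplicity of $D$. Since $d\in(b,c)$ and $e\in(c,a)$, neither $d$ nor $e$ equals $a$ or $c$, and $v\notin\{w_a,w_c\}$; therefore both $vw_d$ and $vw_e$ are independent of $w_aw_c$, so $w_aw_c$ crosses each of them at most once in $D$. The containments $F_{w_ax}(b,c)\subseteq F_c(b,c)$ and $F_{xw_c}(c,a)\subseteq F_c(c,a)$ guarantee that $w_aw_c$ does cross each of $vw_d$ and $vw_e$, so $\prec_c$ compares them; by hypothesis the unique crossing of $w_aw_c$ with $vw_d$ lies on the sub-arc $w_ax$ and the unique crossing of $w_aw_c$ with $vw_e$ lies on the sub-arc $xw_c$. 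Traversing $\gamma_{a,c}$ from $w_a$ to $w_c$ we thus meet the crossing with $vw_d$ (somewhere on $w_ax$) strictly before reaching $x$, and the crossing with $vw_e$ (somewhere on $xw_c$) strictly after $x$. Hence the crossing with $vw_d$ is closer to $w_a$ than the crossing with $vw_e$, which is precisely the meaning of $vw_d\prec_c vw_e$.

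I do not expect any real obstacle here; the only points to keep in mind are that the hypothesis $vw_d\in F_b(b,c)$ is not used in the argument — it is recorded only because that is the form in which the observation is applied in the subsequent noncrossing drawing construction — and that one should explicitly note the simplicity of $D$ so that ``crosses $vw_d$ on $w_ax$'' really does pin down the position of the crossing relative to $x$ along $\gamma_{a,c}$.
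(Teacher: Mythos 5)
There is a genuine gap here, although it originates in a misleading feature of the statement as printed. Your positional argument (the crossing of $w_aw_c$ with $vw_d$ lies on the sub-arc $w_ax$, the crossing with $vw_e$ lies on $xw_c$, hence the former precedes the latter along $\gamma_{a,c}$) is sound for the literal hypothesis, but that hypothesis cannot be the intended one: by Observation~\ref{obs_II_b_c} we have $F_b(b,c)\subseteq F_{xw_c}(b,c)$, and since in $D$ every edge of $S(v)$ crosses $w_aw_c$ at most once, $F_{w_ax}(b,c)\cap F_{xw_c}(b,c)=\emptyset$; consequently $F_b(b,c)\cap F_{w_ax}(b,c)=\emptyset$ and the statement you proved is vacuous. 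The subscript $w_ax$ is evidently a typo for $xw_c$ (compare the deliberately redundant intersections in the hypothesis of Observation~\ref{obs_II_a_b__c_a_poradi}), and the intended reading is also forced by the later application, which needs $vw_k\prec''vw_j$ for \emph{every} $vw_k\in F_b(b,c)$, where $vw_j$ is the $\prec''$-smallest element of $F_{xw_c}(c,a)$; such $vw_k$ lie in $F_{xw_c}(b,c)$, not in $F_{w_ax}(b,c)$. The vacuity of the hypothesis under your reading (together with the fact that the conclusion is then needed for edges it would not cover) should have been a red flag.

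For the intended statement your approach collapses: both crossings in question lie on the same sub-arc $xw_c$ (the one with $vw_d$ is forced there by Observation~\ref{obs_II_b_c}), so subdividing $\gamma_{a,c}$ at $x$ gives no information about their relative order, and simplicity of $D$ alone cannot decide it. This is precisely why the hypothesis $vw_d\in F_b(b,c)$, which you dismiss as unused bookkeeping, is the essential one. The paper's proof is a genuine realizability argument on five-vertex subgraphs: assuming $vw_e\prec_c vw_d$, simple realizability of $A[\{v,w_a,w_b,w_d,w_e\}]$ forces $w_aw_d$ and $vw_e$ not to cross and pins down the rotation of $w_a$ (here one uses that $w_aw_b$ crosses $vw_d$ but not $vw_e$, the latter because $F_{xw_c}(c,a)$ is disjoint from $F_b(c,a)$ by Observation~\ref{obs_II_c_a__x_w_c}), while simple realizability of $A[\{v,w_a,w_c,w_d,w_e\}]$ forces $w_aw_d$ and $vw_e$ to cross --- a contradiction, so the six-tuple $\{v,w_a,w_b,w_c,w_d,w_e\}$ is an obstruction. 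Your proposal contains no argument of this kind, so it does not establish the fact actually needed in Step 3c.
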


\begin{proof}
Suppose, for a contradiction, that $vw_e \prec_c vw_d$. Let $T$ be a simple realization of $A[\{v,\allowbreak w_a,\allowbreak w_b,\allowbreak w_c,\allowbreak w_d,\allowbreak w_e\}]$ where $v$ is incident with the outer face. See Figure~\ref{obr_4_9_II}, left. The edge $w_aw_d$ is drawn inside the triangle $vw_aw_b$ in $T$ and the edge $vw_e$ is drawn outside of the triangle $vw_aw_b$, thus the edges $w_aw_d$ and $vw_e$ do not cross. The edge $w_aw_d$ is also drawn outside the triangle $vw_aw_c$. Since the edge $vw_e$ separates $w_a$ from $w_d$ outside the triangle, the edges $w_aw_d$ and $vw_e$ are forced to cross; a contradiction.
\end{proof}

\begin{figure}
\begin{center}
\epsfbox{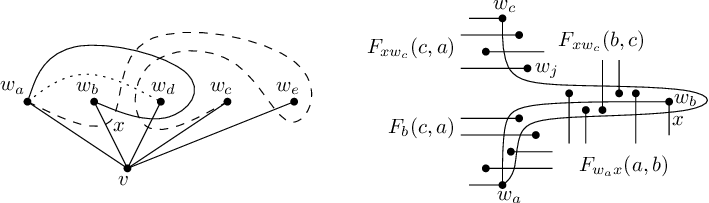}
\end{center}
\caption{Left: Illustration to Observation~\ref{obs_II_b_c__xw_c_poradi}. Right: A drawing of the edges $w_aw_b$, $w_aw_c$ and parts of the edges of $S(v)$ in case II), where $w_aw_b$ and $w_aw_c$ do not cross. The rotation of $v$ is compatible with the counterclockwise cyclic order of the parts of the edges drawn.}
\label{obr_4_9_II}
\end{figure}

Assuming Observations~\ref{obs_zleva_zprava} and~\ref{obs_poporade}, we show that Observations~\ref{obs_II_a_b}--\ref{obs_II_b_c__xw_c_poradi} imply that ${\rm cr}(w_aw_b,\allowbreak w_aw_c)=0$. Refer to Figure~\ref{obr_4_9_II}, right. Start with drawing the edges $w_aw_b$ and $w_aw_c$ without crossing and so that $w_aw_c$ crosses $vw_b$ from the left; that is, so that the rotation of the crossing $x$ is $(w_a,w_b,w_c,v)$.

Observation~\ref{obs_II_a_b__c_a_poradi} implies that there is a total order $\prec'$ on $F_{w_ax}(a,b) \cup F_{w_ax}(c,a) \cup E_{v,w_aw_b}$ that is a common extension of $\prec_b$ and $\prec_c$. 
By Observation~\ref{obs_poporade}, all pairs of edges $vw_d,vw_e \in F_b(b,c)$ satisfy $vw_d \prec_b vw_e \Leftrightarrow vw_e \prec_c vw_d$, thus
there is also a total order $\prec''$ on $F_{xw_c}(b,c) \cup F_{xw_c}(c,a) \cup E_{v,w_aw_b}$ that is a common extension of $\prec_c$ and the inverse of $\prec_b$. 
Let $vw_j$ be the $\prec''$-smallest element of $F_{xw_c}(c,a)$.
Observations~\ref{obs_II_a_b} and~\ref{obs_II_c_a} imply that we can draw the edges of $F_{w_ax}(a,b) \cup F_{w_ax}(c,a) \cup F_b(a,b) \cup F_b(c,a)$ like in the figure. Observation~\ref{obs_II_b_c} implies that we can draw the edges of $F_{xw_c}(b,c) \cup F_b(b,c)$ like in the figure. 
Observation~\ref{obs_II_c_a__x_w_c} implies that the edges of $F_b(c,a)$ are separated from the edges of $F_{xw_c}(c,a)$ in the rotation of $v$, and Observation~\ref{obs_II_b_c__xw_c_poradi} implies that all edges $vw_k\in F_b(b,c)$ satisfy $vw_k \prec'' vw_j$, therefore we can draw the edges of $F_{xw_c}(c,a)$ like in the figure.


\subsubsection*{3d) Detecting multiple crossings of independent edges}

There are several possible approaches to finding small obstructions in the case when independent edges cross more than once. By the analysis in~\cite{K11_simple_real}, if two independent edges $e,f$ cross at least twice, then a subset of at most $13$ vertices is responsible: this includes the vertex $v$, the endpoints of $e$ and $f$, and at most four more vertices for each crossing.

We could proceed analogously as in the previous substep, by investigating drawings of a graph consisting of the star $S(v)$ and two additional independent edges $e,f$. We would show that if the homotopy classes of $e,f$ force the two edges to cross at least twice, then it is typically due to a subgraph with six or seven vertices, including $v$, the endpoints of $e$ and $f$, and one or two more vertices preventing simplification of the drawing; see Figure~\ref{obr_4_ind_0_example} for one such example. However, this would lead to a rather tedious case analysis.

\begin{figure}
\begin{center}
\epsfbox{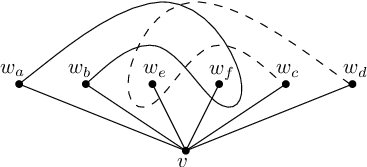}
\end{center}
\caption{Two independent edges crossing more than once in a subgraph with seven vertices.} 
\label{obr_4_ind_0_example}
\end{figure}

Therefore, we choose another approach. We use the fact that $D$ is a drawing of a complete graph and that adjacent edges do not cross in $D$, which is guaranteed by the results of the previous step. We proceed by induction. Whenever we have two independent edges that cross 
at least twice, we either find a drawing of a forbidden subgraph with just five vertices, or another pair of independent edges that cross at least twice, but in a smaller subgraph.
Now we make the idea more precise.

Let $e=w_aw_b$ and $f=w_cw_d$ be two independent edges that cross more than once in $D$. 
In the subgraph of $D$ formed by the two edges $e$ and $f$, the vertices $w_a,w_b,w_c,w_d$ are incident to a common face, since adjacent edges do not cross in $D$ and every pair of the four vertices $w_a,w_b,w_c,w_d$ is connected by an edge. We assume without loss of generality that $w_a,w_b,w_c,w_d$ are incident to the outer face. That is, we may draw a simple closed curve $\gamma$ containing the vertices $w_a,w_b,w_c,w_d$ but no interior points of $e$ or $f$, such that the relative interiors of $e$ and $f$ are inside $\gamma$. We distinguish two cases according to the parity of the number of crossings of $e$ and $f$.


\paragraph*{I) The edges $e$ and $f$ cross an even number of times.}
%
The edge $e$ splits the region inside $\gamma$ into two regions, $R_0(e)$ and $R_1(e)$, where $R_0(e)$ is the region 
that does not contain the endpoints of $f$ on its boundary. Similarly, $f$ splits the region inside $\gamma$ into regions $R_0(f)$ and $R_1(f)$ where $R_0(f)$ is the region 
that does not contain the endpoints of $e$ on its boundary.

\begin{figure}
\begin{center}
\epsfbox{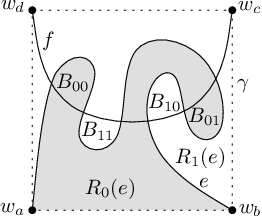}
\end{center}
\caption{Four types of bigons between $e$ and $f$. An $ij$-bigon is denoted by $B_{ij}$.} 
\label{obr_4_ind_1_regions}
\end{figure}

By Lemma~\ref{lemma_excess2}, there is an innermost embedded $2$-gon between $e$ and $f$. For brevity, we call an innermost embedded $2$-gon shortly a \emph{bigon}. For a bigon $B$, by $B^o$ we denote the open region inside $B$ and we call it the \emph{inside of $B$}. There are four possible types of bigons between $e$ and $f$, according to the regions $R_i(e)$ and $R_j(f)$ in which their insides are contained. For $i,j\in \{0,1\}$, we call a bigon $B$ an \emph{$ij$-bigon} if $B^o\subseteq R_i(e) \cap R_j(f)$; see Figure~\ref{obr_4_ind_1_regions}.

Since $D$ is a drawing realizing the crossing number ${\rm cr}(e,f)$, there is at least one vertex of $D$ inside every bigon. The graph induced by $v$, the endpoints of $e$ and $f$, and a set of vertices intersecting all bigons, certifies that $e$ and $f$ have at least ${\rm cr}(e,f)$ crossings forced by their homotopy classes. 

We use the following classical result about drawings of $K_5$ in the plane. See also Observation~\ref{obs_K5_2K3} for a similar statement and its proof.

\begin{lemma}[{Kleitman~\cite{Kle76_parity}}]
\label{lemma_parity_K5_K3_3}
In every drawing of $K_5$ in the plane the total number of pairs of independent edges crossing an odd number of times is odd. In particular, in every simple drawing of  $K_5$ in the plane the total number of crossings is odd.
\end{lemma}

\begin{observation}
\label{obs_no4odd}
Let $D'$ be a drawing of $K_5$ that is a subdrawing of $D$. For every vertex $v$ of $D'$, there is an edge $vw$ in $D'$ that crosses each of the other edges in $D'$ an even number of times.
\end{observation}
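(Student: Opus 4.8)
The plan is first to reduce the statement to a fact about arbitrary simple drawings of $K_5$, and then to prove that fact using Jordan curves formed by triangles. For the reduction, fix a vertex $v$ of $D'$ and let $a,b,c,d$ be the remaining four vertices of $D'$. Since adjacent edges do not cross in $D$ (Section~3c), for each $w\in\{a,b,c,d\}$ the edge $vw$ crosses each of its six adjacent edges zero times, so the condition ``$vw$ crosses every other edge of $D'$ an even number of times'' is equivalent to ``$vw$ crosses each of the three edges of the triangle on $\{a,b,c,d\}\setminus\{w\}$ an even number of times''. By Observation~\ref{obs_parity_indep} together with the properties of the drawing $D$ (every edge of the star crosses each other edge at most once, and this only for pairs in $\mathcal{X}$), the parity of the number of crossings of any two edges in $D'$ equals $1$ if the pair belongs to $\mathcal{X}$ and $0$ otherwise. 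Hence this parity coincides with the one in any simple realization $T'$ of the AT-graph $A[\{v,a,b,c,d\}]$; such a $T'$ exists because every five-vertex AT-subgraph of $A$ is simply realizable (otherwise the algorithm would have stopped in Step~1a). Since adjacent edges do not cross in $T'$ either, the observation becomes equivalent to the purely combinatorial claim: \emph{in every simple drawing of $K_5$, each vertex is incident to an edge crossed by no other edge.}

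To prove this claim I would argue by contradiction: suppose all four edges $va,vb,vc,vd$ at a vertex $v$ are crossed in a simple drawing $T'$ of $K_5$. Every crossing of $T'$ lies in exactly one of the five induced $K_4$-subdrawings, and each of those has at most one crossing, so $T'$ has at most five crossings. A $v$-edge can be crossed only by an edge of $G:=T'[\{a,b,c,d\}]$ (the $v$-edges are pairwise adjacent), hence the number of crossings of $T'$ equals the number of crossings inside $G$ plus the number of ($v$-edge, $G$-edge) crossings, and the latter is at least $4$. Comparing with the bound $5$ and distributing at most five such crossings among the four $v$-edges, one gets that $G$ has at most one crossing and that at least three of $va,vb,vc,vd$ are crossed exactly once. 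If $vw$ is crossed exactly once, its crossing is with an edge independent of $vw$, hence with one of the three edges of the triangle $\Gamma_w$ on $\{a,b,c,d\}\setminus\{w\}$; since those three edges are pairwise adjacent and hence pairwise noncrossing in $T'$, the curve $\Gamma_w$ is simple and closed, and $vw$ meets it an odd number of times, so $v$ and $w$ lie on opposite sides of $\Gamma_w$. Thus at least three vertices $w\in\{a,b,c,d\}$ are separated from $v$ by the Jordan curve $\Gamma_w$.

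It remains to reach a contradiction from this using only the structure of the simple drawing $G$ of $K_4$ on $\{a,b,c,d\}$, which has either $0$ or $1$ crossing. In the one-crossing case, writing the crossing-free $4$-cycle as $abcd$ in cyclic order, the bounded regions of $\Gamma_a$ and $\Gamma_c$ lie on opposite sides of the diagonal $bd$, those of $\Gamma_b$ and $\Gamma_d$ lie on opposite sides of the diagonal $ac$, and every vertex lies outside its own triangle. In the crossing-free case $G$ is a planar $K_4$, one vertex $x$ lies inside the triangle of the other three, the triangles $\Gamma_w$ with $w\neq x$ are exactly the three bounded faces (hence have pairwise disjoint interiors), and $w$ lies outside $\Gamma_w$ for $w\neq x$. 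In both cases any three of the four vertices contain two vertices $w_1,w_2$ such that $\Gamma_{w_1}$ and $\Gamma_{w_2}$ have disjoint interiors and each $w_i$ lies outside its own triangle; since $v$ is separated from each $w_i$ by $\Gamma_{w_i}$, the point $v$ would have to lie inside both $\Gamma_{w_1}$ and $\Gamma_{w_2}$, which is impossible. This contradiction proves the claim and hence the observation. The one genuinely delicate point is this last step --- determining, in each of the two drawings of $K_4$, which triangles have disjoint interiors and on which side of each triangle the corresponding vertex lies; the crossing-count part of the argument is just bookkeeping.
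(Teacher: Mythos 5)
Your reduction to the purely combinatorial claim (in every simple drawing of $K_5$, each vertex is incident to an edge crossed by no other edge) is correct and is the same first move as in the paper; the paper then notes that four crossed edges at $v$ force five crossings in total and finishes by checking the two $5$-crossing drawings of $K_5$ from the Harborth--Mengersen classification~\cite{HM92_drawings}, whereas you try to finish with a self-contained Jordan-curve argument. The crossing-count bookkeeping and the separation statement ($v$ and $w$ lie on opposite sides of $\Gamma_w$ whenever $vw$ is crossed exactly once) are fine, but the last step has a genuine gap. The facts you assert about the one-crossing drawing of $K_4$ --- ``every vertex lies outside its own triangle'' and the disjointness of the interiors of $\Gamma_a,\Gamma_c$ and of $\Gamma_b,\Gamma_d$ --- are not intrinsic to the drawing: they depend on which face of this $K_4$ subdrawing is unbounded, and you have no control over that, since the subdrawing sits inside the given drawing and the point at infinity can lie in any of its five faces. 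Concretely, with crossing $ac\times bd$, let the unbounded face be the one incident to the crossing and bounded by parts of $ab$, $ac$, $bd$. Then $c$ lies \emph{inside} its own triangle $abd$ and $d$ lies inside $abc$, while the interiors of $\Gamma_a=bcd$ and $\Gamma_b=acd$ (the triangles of the only two vertices that are outside their own triangles) both contain the face incident to the edge $cd$, so they are not disjoint. Hence no pair $w_1,w_2$ with the properties you require exists in this embedding, and your stated contradiction does not follow.

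The claim itself survives, but the argument must be made independent of the choice of outer face, e.g.\ by working on the sphere and checking, for each of the five faces $F$ of the one-crossing $K_4$ (and each of the four faces of the plane $K_4$), that at most two of the four triangles $\Gamma_w$ separate $F$ from $w$; this contradicts having at least three separated vertices no matter which face contains $v$ and where infinity lies. (In the embedding above the contradiction comes from containment rather than disjointness: the interior of $\Gamma_a$ is contained in that of $\Gamma_c$, so ``$v$ inside $\Gamma_a$'' is incompatible with ``$v$ outside $\Gamma_c$''.) Alternatively, you could first argue that one may normalize the drawing so that the unbounded face is the one bounded by the crossing-free $4$-cycle, after which your description is correct; but some such step is needed, and it is exactly the step you flagged as delicate that fails as written.
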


\begin{proof}
By Observations~\ref{obs_parity_indep} and~\ref{obs_parity_adj}, it is sufficient to show that in simple drawings of $K_5$, it is not possible that for some vertex $v$ all four edges incident with $v$ have a crossing. If there was such a vertex, there would have to be five crossings in total by Lemma~\ref{lemma_parity_K5_K3_3}. Up to isomorphism, there are two different simple drawings of $K_5$ with five crossings~\cite{HM92_drawings}; see, for example, the first two drawings in Figure~\ref{obr_2_realizations}. Clearly, all vertices in these two drawings satisfy the claim.
\end{proof}

\begin{lemma}
If $e$ and $f$ cross evenly, then there is no $00$-bigon between $e$ and $f$ in $D$.
\end{lemma}

\begin{proof}
Let $B$ be a $00$-bigon between $e$ and $f$ in $D$. Then there is a vertex $w_i$ inside $B$.
Consider a subdrawing $D'$ of $D$ formed by the edges $e,f,w_aw_c$ and $w_bw_c$; see Figure~\ref{obr_4_ind_2_00bigon}. Since adjacent edges do not cross in $D$, the only crossings in $D'$ are those between $e$ and $f$. In particular, the triangle $w_aw_bw_c$ is a simple closed curve; call it $\gamma_{abc}$. Since $w_i\in R_0(e)$, $w_d$ is outside $R_0(e)$, and the edge $f$ crosses the curve $\gamma_{abc}$ only on $e$, it follows that $w_i$ and $w_d$ are on opposite sides of $\gamma_{abc}$. Therefore, the edge $w_iw_d$ has to cross some edge of the triangle $w_aw_bw_c$ an odd number of times. 
By symmetry, in the subgraph induced by the $5$-tuple $\{w_a,w_b,w_c,w_d,w_i\}$, every edge adjacent to $w_i$ crosses some other edge an odd number of times. This contradicts Observation~\ref{obs_no4odd}.
\end{proof}

\begin{figure}
\begin{center}
\epsfbox{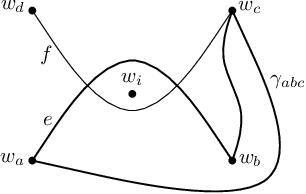}
\end{center}
\caption{The vertices $w_i$ and $w_d$ are on opposite sides of the curve $\gamma_{abc}$.} 
\label{obr_4_ind_2_00bigon}
\end{figure}

\begin{observation}
\label{obs_01_10}
If $e$ and $f$ cross evenly and at least twice in $D$, and there is no $00$-bigon between $e$ and $f$, then there is a $01$-bigon and a $10$-bigon between $e$ and $f$.
\end{observation}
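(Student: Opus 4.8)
The plan is to reduce the statement to an elementary fact about non-crossing matchings. Write $2k$ for the number of crossings of $e$ and $f$. First I would dispose of the case $2k=2$: then the arc of $e$ between its two crossings and the arc of $f$ between its two crossings share both endpoints and bound an empty disc, which is a $00$-bigon, so the hypothesis fails and the statement holds vacuously. Hence assume $2k\ge 4$.

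Label the crossings $z_1,\dots,z_{2k}$ in the order they occur along $e$, cutting $e$ into arcs $e_0,\dots,e_{2k}$ with $w_a\in e_0$ and $w_b\in e_{2k}$; and label the same crossings $y_1,\dots,y_{2k}$ in the order along $f$, cutting $f$ into $f_0,\dots,f_{2k}$ with $w_c\in f_0$ and $w_d\in f_{2k}$. The key preliminary step is a dictionary between combinatorics and bigons. Whenever an ``inner'' arc $e_l$ ($1\le l\le 2k-1$) and an inner arc $f_i$ ($1\le i\le 2k-1$) have the same pair of endpoints, the simple closed curve $e_l\cup f_i$ bounds a disc $B$; no further arc of $e$ or $f$ can enter $B$ (to leave it such an arc would have to cross $e_l$ or $f_i$, impossible since $e$ and $f$ are simple and the relative interiors of $e_l$ and $f_i$ contain no crossing), so $B$ is a bigon. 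Moreover, since the endpoint $w_c$ of $f$ lies on the $R_1(e)$ side of $e$, the arc $f_i$ --- and hence $B^o$ --- lies in $R_0(e)$ exactly when $i$ is odd, and symmetrically $e_l$, and hence $B^o$, lies in $R_0(f)$ exactly when $l$ is odd. Thus $B$ is a $00$-, a $01$-, or a $10$-bigon according as the parities of $(i,l)$ are (odd, odd), (odd, even), or (even, odd). I expect making these parity conventions agree with the definition of the regions $R_0,R_1$ and with Figure~\ref{obr_4_ind_1_regions} to be the part that needs the most care; the remaining steps are soft.

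Finally I would observe that the $k$ arcs $f_1,f_3,\dots,f_{2k-1}$ of $f$ are pairwise disjoint, all lie in the disc $R_0(e)$, and have their $2k$ endpoints on the boundary arc $e$ of $R_0(e)$, using each of $z_1,\dots,z_{2k}$ exactly once; hence they realize a non-crossing perfect matching $M$ of the linearly ordered set $z_1<\dots<z_{2k}$. Symmetrically, $e_1,e_3,\dots,e_{2k-1}$ realize a non-crossing perfect matching $N$ of $y_1<\dots<y_{2k}$. Now I use that every non-crossing perfect matching on at least two linearly ordered points contains a pair of consecutive points (take a matched pair of smallest index gap). Let $\{z_l,z_{l+1}\}$ be such a pair of $M$, matched by an arc $f_i$ with $i$ odd; if $l$ were odd the dictionary would produce a $00$-bigon, contrary to hypothesis, so $l$ is even and $e_l\cup f_i$ is a $01$-bigon. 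Applying the same argument to $N$ yields a consecutive pair $\{y_i,y_{i+1}\}$ matched by an arc $e_l$ with $l$ odd, which forces $i$ to be even, so $e_l\cup f_i$ is a $10$-bigon. Hence both required bigons exist.
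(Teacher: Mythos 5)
Your proposal is correct, and it is essentially an expanded version of the paper's (very terse) argument: both rest on the fact that crossings force an innermost bigon whose inside lies in $R_0(e)$ (resp.\ $R_0(f)$), and by the parity of the arcs such a bigon can only be a $00$- or $01$-bigon (resp.\ $00$- or $10$-bigon), so absence of $00$-bigons yields the other two types. Your non-crossing-matching and arc-parity bookkeeping just makes explicit what the paper leaves implicit in the sentence ``no $00$- and no $01$-bigon implies no bigon in $R_0(e)$, hence no crossings.''
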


\begin{proof}
If there is no $00$-bigon and no $01$-bigon between $e$ and $f$, then there is no bigon in the region $R_0(e)$, so $e$ and $f$ have no crossings.
\end{proof}

Now we define crossing numbers of edges \emph{relative} to a subset of vertices. For a subset $W$ of vertices of $A$ containing $v$ and the endpoints of two edges $e$ and $f$, let ${\rm cr}_W(e,f)$ be the minimum possible number of crossings of two curves from the homotopy classes of $e$ and $f$ determined by $A[W]$, by a procedure analogous to the one in Step 2. That is, if we start with the subdrawing of $D$ consisting of the vertices of $W$ and the edges $e$ and $f$, and simplify it sequentially by removing all bigons with no vertices of $W$ inside them, we get a drawing where $e$ and $f$ cross ${\rm cr}_W(e,f)$ times.

The following lemma proves the induction step in the case when $e$ and $f$ cross an even number of times.

\begin{lemma}
If $e$ and $f$ cross evenly and at least twice in $D$, and there is no $00$-bigon between $e$ and $f$, then there is 
a proper subset $W$ of vertices of $A$ and an edge $g$ independent from $f$ such that ${\rm cr}_W(g,f) \ge 2$.
\end{lemma}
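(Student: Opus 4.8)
The plan is to exploit the structure imposed by Observation~\ref{obs_01_10}: since there is no $00$-bigon, there is a $01$-bigon $B_{01}$ and a $10$-bigon $B_{10}$ between $e$ and $f$. Each bigon contains at least one vertex of $D$. Pick a vertex $w_i$ inside $B_{01}$ (so $w_i\in R_0(e)\cap R_1(f)$) and a vertex $w_j$ inside $B_{10}$ (so $w_j\in R_1(e)\cap R_0(f)$). The idea is that $w_i$ is ``protected'' from $f$ on the side of $e$, while $w_j$ is ``protected'' from $e$ on the side of $f$, and this asymmetry should let us replace one of the two edges $e,f$ by an edge incident to $w_i$ (or $w_j$), producing a new pair of independent edges that still cross at least twice, but whose forced crossings are certified by a strictly smaller vertex set.

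Concretely, first I would argue, analogously to the $00$-bigon lemma, about the triangle $\gamma_{abc}=w_aw_bw_c$ formed by $e=w_aw_b$ together with $w_aw_c$ and $w_bw_c$ (none of which cross each other in $D$, since adjacent edges do not cross). Since $w_i\in R_0(e)$ and $w_d$ lies outside $R_0(e)$, and $f$ meets $\gamma_{abc}$ only along $e$, the vertices $w_i$ and $w_d$ are separated by $\gamma_{abc}$; in particular $w_iw_d$ crosses $\gamma_{abc}$ oddly, so $w_iw_d$ crosses at least one of $e$, $w_aw_c$, $w_bw_c$. If $A[\{w_a,w_b,w_c,w_d,w_i\}]$ is not simply realizable we would have found a forbidden $5$-tuple and be done; so assume it is. The key point is then that the edge $g=w_iw_d$, which is independent from $e$ but incident to $f$'s endpoint $w_d$ — so I should instead use $g=w_iw_c$ or $g=w_iw_a$, something independent from $f$ — plays the role of the new edge. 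Let me restate: I want $g$ adjacent to one endpoint of $e$ and independent from $f$; the natural candidate is $g=w_iw_a$ or $g=w_iw_b$. Because $w_i$ sits inside the $01$-bigon, the portion of $e$ bounding that bigon is homotopic, rel its crossings with $f$, to a path through $w_i$; so one may reroute $e$ along $w_aw_i$ then $w_iw_b$, and the resulting pair still crosses $f$ at least ${\rm cr}(e,f)\ge 2$ times, with one fewer bigon available or with the crossings certified inside $W=V(D)\setminus\{\text{some vertex}\}$. The vertex to delete is a vertex whose only role was to sit inside a bigon of $e$ and $f$ on the far side — for instance $w_j$ inside $B_{10}$, if that bigon becomes irrelevant after rerouting, or $w_d$ itself is not deletable, so more care is needed.

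The cleaner way to organize the induction step is: set $W=V(A)\setminus\{w_k\}$ where $w_k$ is chosen as a vertex lying strictly inside a bigon of $e$ and $f$ that is \emph{not} needed to force the crossing we keep; and take $g$ to be $w_aw_i$ or $w_bw_i$ (whichever, together with the other, gives a path from $w_a$ to $w_b$ through $w_i$ that is homotopic in $\Sigma$ to $e$ relative to crossings with $f$). One then checks ${\rm cr}_W(g,f)\ge 2$ by observing that inside $W$ the bigons between $g$ and $f$ correspond to bigons between $e$ and $f$ minus possibly one, and that at least ${\rm cr}(e,f)$ of them survive because there is no $00$-bigon and hence the surviving $01$- and $10$-bigons alternate along $e$ in a way that cannot be simultaneously removed by deleting a single vertex. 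Finally $g$ is independent from $f$ because $g$ is incident only to vertices among $w_a,w_b,w_i$, none of which equals $w_c$ or $w_d$ (here $w_i\ne w_c,w_d$ since $w_i$ is strictly inside a bigon, hence not an endpoint of $e$ or $f$), and $W$ is a proper subset since we removed $w_k$.

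The main obstacle I expect is the bookkeeping needed to guarantee that (a) the rerouted edge $g$ genuinely lies in a homotopy class forcing at least two crossings with $f$ \emph{after} restricting to $W$, not merely in the full drawing $D$ — this requires showing that deleting the single vertex $w_k$ does not collapse too many bigons at once — and (b) that $g$'s homotopy class relative to the star $S(v)$ is the one the algorithm would compute from $A[W]$, so that ${\rm cr}_W$ is the right quantity. Handling (a) is where the ``no $00$-bigon'' hypothesis and Observation~\ref{obs_01_10} do the real work: the $01$- and $10$-bigons come in two families on opposite sides of $e$, so a single deleted vertex can empty at most the bigons it lies in, leaving at least one bigon (hence at least two crossings, by parity and the evenness of ${\rm cr}(e,f)$) still forced. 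The rest is the same style of five-vertex case analysis as in substep 3c, invoking Proposition~\ref{prop_poradi_na_hvezde} to pin down rotations whenever a $5$-tuple is assumed simply realizable.
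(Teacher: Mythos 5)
Your opening moves match the paper's: invoke Observation~\ref{obs_01_10}, take a vertex $w_i$ inside a $01$-bigon, and replace $e$ by one of the adjacent edges $w_aw_i$, $w_bw_i$ (which are independent from $f$ and, being adjacent to $e$, cannot cross $e$; since $w_i$ is separated from $w_a$ and $w_b$ by at least two arcs of $f$ with endpoints on $e$, each of these edges is forced to cross $f$ at least twice). But the heart of the lemma is missing: you never pin down which vertex to delete to make $W$ proper, nor do you prove that the deletion preserves the bound ${\rm cr}_W(g,f)\ge 2$. Your candidate deletions (a vertex $w_j$ inside the $10$-bigon, or ``a vertex whose only role was to sit inside a bigon on the far side'') are not the right ones, and your justification --- that a single deleted vertex ``can empty at most the bigons it lies in, leaving at least one bigon still forced'' --- is not a proof and is false as a general principle: once a bigon becomes empty and is eliminated, neighbouring bigons can merge and become empty in turn, so one vertex deletion can trigger a cascade that destroys many crossings. (That certifying crossings by few vertices is genuinely delicate is exactly the point of the picture-hanging construction in Section~\ref{section_picture_hanging}.)

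The paper's actual argument is different at this step. First, it takes $W'$ minimal containing $v,w_a,w_b,w_c,w_d,w_i$ and one vertex in every bigon between $e$ and $f$ except $B$; if $W'$ is already proper one simply takes $g=e$, a case your proposal omits. Otherwise it deletes an \emph{endpoint of $e$}: it claims that ${\rm cr}_{W'\setminus\{w_b\}}(w_aw_i,f)={\rm cr}_{W'}(w_aw_i,f)$ or ${\rm cr}_{W'\setminus\{w_a\}}(w_bw_i,f)={\rm cr}_{W'}(w_bw_i,f)$, and proves this by contradiction: if both fail, then $w_b$ lies in a bigon between $w_aw_i$ and $f$, and $w_a$ lies in a bigon between $w_bw_i$ and $f$; since $e$, $w_aw_i$, $w_bw_i$ are pairwise adjacent and hence noncrossing, these two bigons together with $B$ are disjoint, and connecting $w_i,w_a,w_b$ to points on the opposite sides of their respective bigons yields a plane drawing of $K_{3,3}$, a contradiction. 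This disjoint-bigon/$K_{3,3}$ planarity argument is the essential idea your proposal lacks; without it (or an equally rigorous substitute for your ``one deletion cannot collapse too many bigons'' claim, which you yourself flag as the main obstacle), the induction step does not go through.
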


\begin{proof}
By Observation~\ref{obs_01_10}, there is a $01$-bigon $B$ between $e$ and $f$. Let $w_i$ be a vertex inside $B$. 
Let $W'$ be a minimal subset of vertices containing $v,\allowbreak w_a,\allowbreak w_b,\allowbreak w_c,\allowbreak w_d,\allowbreak w_i$ and at least one vertex inside every bigon between $e$ and $f$ (except $B$). 
By the definition of $W'$, we have $\mathrm{cr}_{W'}(e,f)=\mathrm{cr}(e,f)$. We may assume that $W'$ is the vertex set of $A$, otherwise we just take $g=e$. The edges $w_aw_i$ and $w_bw_i$ are adjacent to $e$ and thus do not cross $e$. Moreover, in the drawing induced by $e$ and $f$, the vertices $w_a$ and $w_b$ are separated from $w_i$ by at least two parts of $f$ connecting points on $e$. This implies that $\mathrm{cr}_{W'}(w_aw_i,f)\ge 2$ and $\mathrm{cr}_{W'}(w_bw_i,f)\ge 2$.

We claim that at least one of the equalities 
\[
\mathrm{cr}_{W'\setminus \{w_b\}}(w_aw_i,f)=\mathrm{cr}_{W'}(w_aw_i,f) \ \text{ or }\ \mathrm{cr}_{W'\setminus \{w_a\}}(w_bw_i,f)=\mathrm{cr}_{W'}(w_bw_i,f)
\]
is satisfied; this will imply the lemma by taking $W=W'\setminus \{w_b\}$ or $W=W'\setminus \{w_a\}$. Suppose the contrary. Then by Lemma~\ref{lemma_excess2}, the vertex $w_b$ is inside a bigon $B_b$ between a part of $w_aw_i$ and $f$, and $w_a$ is inside a bigon $B_a$ between a part of $w_bw_i$ and $f$; see Figure~\ref{obr_4_ind_3_3bigons}.
Note that some edges of the triangle $w_aw_bw_i$ can intersect the inside of $B_a$ or $B_b$. However, $w_a$ and $w_i$ are not inside $B_b$, since they are connected to $w_c$ and $w_d$ by edges adjacent to $w_aw_i$ and $f$. Similarly, $w_b$ and $w_i$ are not inside $B_a$. Thus, $B$, $B_a$ and $B_b$ are the smallest bigons between $f$ and a part of an edge of the triangle $w_aw_bw_i$ that contain $w_i$, $w_a$ and $w_b$, respectively. Since the edges $e$, $w_aw_i$ and $w_bw_i$ are pairwise adjacent, 
they do not cross, and this implies that the interiors of the bigons $B$, $B_a$ and $B_b$ are pairwise disjoint; their boundaries may share a portion of $f$ though. Let $x_i$ be a point on $e$ on the boundary of $B$, $x_a$ a point on $w_bw_i$ on the boundary of $B_a$, and $x_b$ a point on $w_aw_i$ on the boundary of $B_b$. By joining $x_i$ with $w_i$ inside $B$, $x_a$ with $w_a$ inside $B_a$, and $x_b$ with $w_b$ inside $B_b$, we obtain a plane drawing of $K_{3,3}$; a contradiction.
\end{proof}

\begin{figure}
\begin{center}
\epsfbox{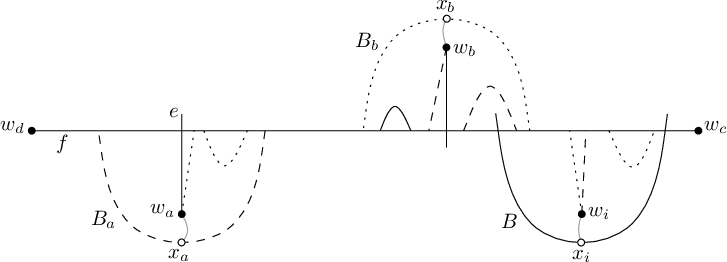}
\end{center}
\caption{Three interior-disjoint bigons between $f$ and edges of the triangle $w_aw_bw_i$ yield a plane drawing of $K_{3,3}$.} 
\label{obr_4_ind_3_3bigons}
\end{figure}


\paragraph*{II) The edges $e$ and $f$ cross an odd number of times.}
In this case, we find another pair of edges crossing evenly.

\begin{lemma}
If $e$ and $f$ cross oddly and at least three times in $D$, then there are two independent edges $e',f'$ such that ${\rm cr}(e',f')$ is even and ${\rm cr}(e',f') \ge 2$.
\end{lemma}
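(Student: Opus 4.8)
The plan is to reduce the odd case to the even case settled above. We may assume throughout that every five-vertex AT-subgraph of $A$ is simply realizable (otherwise such a subgraph is the obstruction we seek) and that adjacent edges do not cross in $D$ (guaranteed by the preceding substep); in particular Observations~\ref{obs_parity_indep}--\ref{obs_no4odd} are available. Since each edge of $S(v)$ crosses every other edge at most once while $e$ and $f$ cross at least three times, neither $e$ nor $f$ is incident with $v$; write $e=w_aw_b$ and $f=w_cw_d$, so that $w_a,w_b,w_c,w_d$ are four distinct vertices, all different from $v$.

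The first step is to show that \emph{some bigon between $e$ and $f$ in $D$ contains a vertex of $D$ in its interior}. Because $e$ and $f$ are simple arcs crossing more often than the at most one crossing forced by the positions of their endpoints in the plane, Lemma~\ref{lemma_excess2} (applied in a planar surface with tiny holes removed around $w_a,w_b,w_c,w_d$) gives a bigon $\beta$ between $e$ and $f$; let $x$ and $y$ be the two crossings of $e$ and $f$ on $\partial\beta$, and let $e[x,y]$ and $f[x,y]$ be the subarcs of $e$ and $f$ between them, so $\partial\beta=e[x,y]\cup f[x,y]$. I claim $\beta^o$ must contain a vertex. If not, perform the symmetric bigon removal at the corners $x,y$: replace $e[x,y]$ by a parallel copy of $f[x,y]$ pushed slightly inside $\beta$, and $f[x,y]$ by a parallel copy of $e[x,y]$ pushed slightly inside $\beta$. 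This surgery leaves every vertex rotation unchanged and the star $S(v)$ untouched, and --- since $\beta^o$ contains no hole of the surface $\Sigma$ of Step~2 --- keeps $e$ and $f$ in their homotopy classes in $\Sigma$; moreover a short bookkeeping shows that for every edge $g\notin\{e,f\}$ the sum of the numbers of crossings of $g$ with $e$ and of $g$ with $f$ is unchanged (in particular the at most one crossing that each star edge has with $e$ and with $f$ is preserved), while the number of crossings of $e$ with $f$ decreases by $2$. The resulting drawing satisfies all the conditions defining $D$ but has fewer crossings, a contradiction. Hence $\beta$ (indeed every bigon between $e$ and $f$) contains some vertex $w_i$ of $D$ in its interior.

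The second step extracts the required pair from $w_i$. The AT-subgraph $A[\{w_a,w_b,w_c,w_d,w_i\}]$ has a simple realization $T$, and in a simple drawing of $K_5$ the four edges at a fixed vertex cannot all be crossed (this is the fact behind Observation~\ref{obs_no4odd}); so some edge $g$ incident with $w_i$ in $T$ crosses nothing, whence $\{g,h\}\notin\mathcal{X}$ for every edge $h$ of this $K_5$ that is independent from $g$. By symmetry assume $g=w_iw_a$ (if $g$ is $w_iw_b$, $w_iw_c$ or $w_iw_d$ the argument is the same, with the roles of $e$ and $f$ interchanged when $g$ meets $w_c$ or $w_d$). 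From $\{g,f\}\notin\mathcal{X}$ and Observation~\ref{obs_parity_indep}, $g$ and $f$ cross an even number of times in $D$; on the other hand $w_i$ lies inside $\beta$ and $w_a$ lies outside $\overline{\beta}$, so the simple arc $g$ meets $\partial\beta$ an odd number of times, and being adjacent to $e$ it does not cross $e$, so all of these crossings lie on $f[x,y]$ and $g$ crosses $f$ at least once. Thus $g$ and $f$ cross at least twice in $D$; since a star edge crosses $f$ at most once, $g$ is not incident with $v$, so $\mathrm{cr}(g,f)$ equals this number of crossings --- an even integer at least $2$. Taking $e'=g$ and $f'=f$ finishes the proof.

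The step I expect to be most delicate is the bigon-removal bookkeeping: one must check carefully that the local surgery at the two corners genuinely removes two crossings and no others, keeps $e$ and $f$ (and hence all non-star edges) inside their prescribed homotopy classes, and does not disturb the parity pattern of crossings of the star edges with $e$ and $f$, so that the surgered drawing is a legitimate competitor contradicting the minimality of $D$.
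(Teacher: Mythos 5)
Your proof is correct and essentially the paper's own argument: an innermost bigon between $e$ and $f$, a vertex $w_i$ inside it (which the paper obtains directly from the minimality of $D$, i.e.\ from $D$ realizing ${\rm cr}(e,f)$, rather than via your explicit arc-swapping surgery), and the $K_5$ fact underlying Observation~\ref{obs_no4odd} to extract an edge from $w_i$ to an endpoint of $e$ or $f$ that crosses the independent one of $e,f$ evenly and hence at least twice. The only formal blemish is that you invoke Observation~\ref{obs_parity_indep} for $g$ before ruling out $w_i=v$; but if $w_i=v$ then $g$ is a star edge, which crosses $f$ exactly when $\{g,f\}\in\mathcal{X}$, so the same (indeed stronger) parity conclusion holds and the argument is unaffected.
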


\begin{proof}
By Lemma~\ref{lemma_excess2}, there is a bigon $B$ between $e$ and $f$. Let $w_i$ be a vertex inside $B$. Every edge connecting $w_i$ with an endpoint of $e$ or $f$ crosses $f$ or $e$ at least once. By Observation~\ref{obs_no4odd}, at least one of these four edges crosses each of $e$ and $f$ an even number of times.
\end{proof}

\section{Independent $\mathbb{Z}_2$-realizability}
\label{section_Z2}

\subsection{Algebraic algorithm}

By the Hanani--Tutte theorem, planarity testing can be reduced to solving a system of linear equations over $\mathbb{Z}_2$~\cite[Section 1.4.1]{Sch13_hananitutte}. The same algebraic method can be used for testing independent $\mathbb{Z}_2$-realizability of general AT-graphs. Pach and T\'oth described a slightly more general method for proving the NP-completeness of computing the odd crossing number~\cite{PT00_which}, where they also took the rotation system into account. We now describe the method in detail, since it will serve as a basis for the proof of Theorem~\ref{veta_Z2_charakterizace}. 

Let $(G, \mathcal{X})$ be a given AT-graph and let $D$ be an arbitrary drawing of $G$. For example, we may place the vertices of $G$ on a circle in an arbitrary order and draw every edge as a straight-line segment. We use the ``obvious'' fact that every drawing of $G$ can be obtained from any other drawing of $G$ by a homeomorphism of the plane, which aligns the vertices of the two drawings, followed by a sequence of finitely many continuous deformations (isotopies) of the edges that keep the endpoints fixed, and maintain the property that every pair of edges have only finitely many points in common at each moment of the deformation (we will call such deformations \emph{generic}). This fact can be proved using the Jordan--Sch\"onflies theorem.

We will assume that the positions of the vertices are fixed. The algorithm will test whether the edges of the initial drawing $D$ can be continuously deformed to form an independent $\mathbb{Z}_2$-realization of $(G,\mathcal{X})$. During a generic continuous deformation from $D$ to some other drawing $D'$, three types of combinatorially interesting events can occur: 
\begin{enumerate}
\item[1)] two edges exchanging their order around their common vertex and creating a new crossing, 
\item[2)] an edge passing over an another edge, forming a pair of new crossings and a lens between them, 
\item[3)] an edge $e$ passing over a vertex $v$ not incident to $e$, creating a crossing with every edge incident to $v$. 
\end{enumerate}
Each of the three events also has a corresponding inverse event, where crossings are eliminated. Clearly, the effects on the parity of the number of crossings between edges are the same. The parity of the number of crossings between a pair of independent edges is affected only during the event 3) or its inverse, in which case we change the parity of the number of crossings of $e$ with all the edges incident to $v$; see Figure~\ref{obr_6_1_switch}. We call such an event an \emph{edge-vertex switch} and we will denote it by the ordered pair $(e,v)$. We will consider drawings up to the equivalence generated by events of type 1) and 2) and their inverses. 
%
Every edge-vertex switch $(e,v)$ can be performed independently of others, for any initial drawing, by deforming the edge $e$ along a curve connecting an interior point of $e$ with $v$.
We can thus represent the deformation from $D$ to $D'$ by the set of edge-vertex switches that were performed an odd number of times during the deformation.

\begin{figure}
\begin{center}
\epsfbox{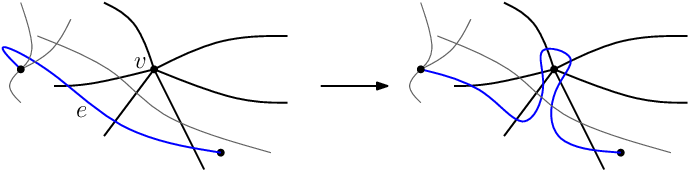}
\end{center}
\caption{A continuous deformation of $e$ resulting in an edge-vertex switch $(e,v)$.}
\label{obr_6_1_switch}
\end{figure}

A drawing $D$ of $G$ can then be represented by a vector ${\bf v}\in\mathbb{Z}_2^{M}$ where
$M$ is the number of unordered pairs of independent edges in $G$. The component of ${\bf v}$ corresponding to a pair $\{e,f\}$ is $1$ if $e$ and $f$ cross an odd number of times and $0$ otherwise. We remark that the space $\mathbb{Z}_2^{M}$ can also be considered as the space of subgraphs of the complement of the line graph $L(G)$. If $G=K_n$, then the complement of $L(G)$ is the Kneser graph $KG_{n,2}$.

Let $e$ be an edge of $G$ and $v$ a vertex of $G$ such that $v \notin e$. Performing an edge-vertex switch $(e,v)$ corresponds to adding the vector ${\bf w}_{(e,v)}\in\mathbb{Z}_2^{M}$ whose only components equal to $1$ are those indexed by pairs $\{e,f\}$ where $f$ is incident to $v$.
The set of all drawings of $G$ that can be obtained from $D$ by edge-vertex switches then corresponds to an affine subspace ${\bf v}+W$ where $W$ is the subspace generated by the set $\{{\bf w}_{(e,v)}; v\in V(G), e\in E(G), v \notin e\}$.

An AT-graph $(G,\mathcal{X})$ can similarly be represented by the vector ${\bf x}\in\mathbb{Z}_2^{M}$ whose component corresponding to a pair $\{e,f\}$ is $1$ if and only if $\{e,f\}\in \mathcal{X}$. In other words, ${\bf x}$ is the characteristic vector of $\mathcal{X}$. Now $(G,\mathcal{X})$ is independently $\mathbb{Z}_2$-realizable if and only if ${\bf x}\in {\bf v}+W$, equivalently, ${\bf x-v}\in W$. This is equivalent to the solvability of a system of $M$ linear equations over $\mathbb{Z}_2$, where each variable corresponds to one edge-vertex switch. In general, the vectors ${\bf w}_{(e,v)}$ are not linearly independent, so the number of variables in the system could be slightly reduced.

\subsection{Proof of Theorem~\ref{veta_Z2_charakterizace}}

We start with the following well-known fact, which implies that the condition of having no even $K_5$ and no odd $2K_3$ is necessary for independent $\mathbb{Z}_2$-realizability of complete AT-graphs.

\begin{observation}
\label{obs_K5_2K3}
The number of pairs of independent edges crossing an odd number of times is odd in every drawing of $K_{5}$ and even in every drawing of $2K_3$.
\end{observation}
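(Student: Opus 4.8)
\textbf{Proof plan for Observation~\ref{obs_K5_2K3}.}

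The plan is to treat the two cases via a parity invariant argument, showing that the quantity ``number of independent odd pairs'' is invariant modulo~$2$ under the three elementary moves described in Section~\ref{section_Z2}, and then compute its value on one convenient drawing. First I would observe that events of type~1) and type~2) (and their inverses) each change the parity of crossings only for one \emph{adjacent} pair or for \emph{no} independent pair (in the type~2) case the two new crossings lie on the same pair of edges, so the parity of that pair is unchanged). Hence neither move affects the count of independent pairs crossing oddly. The only move that matters is the edge-vertex switch $(e,v)$, which flips the parity of the crossing number of $e$ with each of the edges incident to~$v$. Among those edges, exactly $\deg(v)-1$ many — namely all except the one joining $v$ to an endpoint of $e$, when such an edge exists — wait, more carefully: if $v$ is not an endpoint of $e=xy$, then among the edges at $v$ the two edges $vx$ and $vy$ are adjacent to $e$, and the remaining $\deg(v)-2$ edges are independent from $e$. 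So a switch $(e,v)$ changes the parity of exactly $\deg(v)-2$ independent pairs.

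For $K_5$ every vertex has degree $4$, so each edge-vertex switch flips $4-2=2$ independent pairs, an even number; thus the parity of the total number of independent odd pairs is invariant under all moves, and it suffices to evaluate it on the straight-line convex drawing of $K_5$, where it is well known that there are exactly $5$ crossings, all on independent pairs, giving an odd count. For $2K_3$, label the two triangles $T_1,T_2$; the relevant vertices all have degree $2$ within $2K_3$, so an edge-vertex switch $(e,v)$ with $v\in T_i$ and $e$ an edge of the triangle \emph{not} containing $v$ flips $2-2=0$ independent pairs when $e$ is in the same triangle as $v$ is vacuous, and when $e\in T_j$, $v\in T_i$ with $i\neq j$, it flips $2-2=0$ — hmm, that is wrong, the two edges at $v$ are both in $T_i$ hence both independent from $e\in T_j$, so the switch flips $2$ pairs, which is again even. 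In either subcase the number flipped is even, so the parity is invariant; evaluating on the drawing where $T_1$ and $T_2$ are disjoint (drawn far apart, no crossings at all) gives $0$, an even count. This proves both statements.

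The main obstacle I anticipate is purely bookkeeping: being careful about \emph{which} edges at $v$ are independent from $e$ versus adjacent to $e$, since this is exactly where the relevant parity ($\deg(v)-2$) comes from, and it is the place where the argument for $K_n$ in general would fail for odd $n$ (explaining why the theorem singles out $K_5$ and $2K_3$). One should also double-check the base cases: for $K_5$ the two distinct five-crossing drawings and indeed any convenient drawing give the parity correctly, and for $2K_3$ the disjoint drawing trivially has zero independent odd pairs. A cleaner alternative, which I might use instead, is to note that $\sum_{\{e,f\}\text{ indep}} \mathrm{cr}(e,f) \bmod 2$ equals a fixed linear functional evaluated on the vector $\mathbf{v}\in\mathbb{Z}_2^M$ of Section~\ref{section_Z2}, and that this functional annihilates every generator $\mathbf{w}_{(e,v)}$ precisely when $\deg(v)$ is even (as in $K_5$) or when the geometry of $2K_3$ forces the count flipped to be even; this reduces the whole observation to a one-line linear-algebra check plus evaluation on a single drawing.
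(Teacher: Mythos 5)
Your proposal is correct and follows essentially the same route as the paper: the parity of the number of independent odd-crossing pairs is invariant under edge-vertex switches (the only moves affecting independent pairs), and one then evaluates it on the convex drawing of $K_5$ (five independent crossing pairs) and on a crossing-free drawing of $2K_3$. Your mid-paragraph self-corrections land on the right counts in every case ($2$ flipped pairs in $K_5$, and $0$ or $2$ in $2K_3$ depending on which triangle contains $e$), so the argument is sound as finally stated.
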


\begin{proof}
For drawings of $K_5$ the observation follows from Lemma~\ref{lemma_parity_K5_K3_3}. For drawings of $2K_3$ we show a proof that is analogous to Kleitman's~\cite{Kle76_parity} proof for drawings of $K_5$.
We start with a plane drawing of $2K_3$, with no crossing. Every edge-vertex switch $(e,v)$ changes the parity of the number of crossings for exactly two pairs of edges, $\{e,f_1\}$ and $\{e,f_2\}$, where $f_1$ and $f_2$ are the two edges incident with $v$ but vertex-disjoint with $e$. Therefore, the number of independent odd-crossing pairs of edges remains even after an arbitrary sequence of edge-vertex switches. 
\end{proof}

It remains to show that complete AT-graphs with no even $K_5$ and no odd $2K_3$ are independently $\mathbb{Z}_2$-realizable.

Let $(K_n,\mathcal{X})$ be a complete AT-graph with no even $K_5$ and no odd $2K_3$. Let $M=3\cdot {n\choose 4}$ be the number of (unordered) pairs of independent edges in $K_n$. Let ${\bf x}\in \mathbb{Z}_2^M$ be the characteristic vector of $\mathcal{X}$, and let ${\bf v}\in \mathbb{Z}_2^M$ be the vector representing an arbitrary drawing of $K_n$. By Observation~\ref{obs_K5_2K3}, the vector ${\bf x}-{\bf v}$ represents a complete AT-graph with no odd $K_5$ and no odd $2K_3$; we will call these properties the \emph{$K_5$-property} and the \emph{$2K_3$-property}, respectively. 
%
Let $U \le\mathbb{Z}_2^M$ be the subspace of all vectors with the \emph{$K_5$-property} and the \emph{$2K_3$-property}. Let $W\le \mathbb{Z}_2^M$ be the subspace generated by the vectors ${\bf w}_{(e,v)}$, defined in the previous subsection. By the proof of Observation~\ref{obs_K5_2K3}, we have $W\le U$. We now show that $W=U$, which will imply the theorem.

Let ${\bf u}_0\in U$. By adding certain generators ${\bf w}_{(e,v)}$ to ${\bf u}_0$, we will gradually set all the coordinates to zero, in $2n-3$ steps, building a sequence of vectors ${\bf u}_1, {\bf u}_2, \dots, {\bf u}_{2n-3}$. In each step, we choose an edge $e$ of $K_n$ and set all the coordinates $\{e,f\}$ to zero. We will call this \emph{clearing} the edge $e$. Likewise, we call an edge $e$ \emph{clear} with respect to ${\bf u}_k$ if all the coordinates $\{e,f\}$ of ${\bf u}_k$ are zero.

Let $\{v_0, v_1, \dots, v_{n-1}\}$ be the vertex set of $K_n$. In steps $1,2,\dots,n-1$, we clear the edges $v_0v_1, v_0v_2, \dots, v_0v_{n-1}$, respectively. In step $i$, whenever a coordinate $\{v_0v_i,f\}$ of ${\bf u}_{i-1}$ is equal to $1$, we add the generator ${\bf w}_{(f,v_i)}$, which changes the coordinate $\{v_0v_i,f\}$ to $0$ but does not affect any other coordinate of the form $\{v_0v_j,g\}$. Thus, adding the necessary generators in step $i$ keeps the edges $v_0v_j$ clear for $j<i$. Hence, after the first $n-1$ steps, every edge incident to $v_0$ is clear with respect to ${\bf u}_{n-1}$.

In the remaining $n-2$ steps, we want to clear all the edges $v_1v_i$ for $i\ge 2$. Since we also want to keep all the edges incident to $v_0$ clear, we cannot just add the generator ${\bf w}_{(f,v_i)}$ if we want to set a coordinate $\{v_1v_i,f\}$ to zero. Thus, we will restrict ourselves to adding only those vectors of $W$ that have all the coordinates $\{v_0v_i,f\}$ equal to zero. For every $i,j\in\{1,2,\dots,n-1\}$, $i\neq j$, let
\[
{\bf y}_{i,j}={\bf w}_{(v_0v_i, v_j)} + \sum_{1\le k\le n-1;\ k\neq i,j} {\bf w}_{(v_jv_k, v_i)}.
\]
The nonzero coordinates of ${\bf y}_{i,j}$ are exactly those pairs $\{v_iv_k,v_jv_l\}$ where $k,l\in \{1,2,\dots,n-1\}$ and $i,j,k,l$ are distinct. In the remaining steps, we will be adding only the vectors ${\bf y}_{i,j}$. These vectors are a rather blunt tool compared to the vectors ${\bf w}_{(e,v)}$. Fortunately, they will be sufficient for clearing the edges $v_1v_i$ thanks to the $2K_3$-property and the $K_5$-property.

\begin{observation}
\label{obs_complete_bip}
If all the edges incident to $v_0$ are clear with respect to a vector~${\bf u}\in U$, then the set $E_{v_1v_i}$ of edges $f$ such that the coordinate $\{v_1v_i,f\}$ is nonzero in~${\bf u}$ forms a complete bipartite graph on the vertex set $\{v_2, v_3, \dots, v_{n-1}\} \setminus \{v_i\}$; this also includes the possibility of the empty graph.
\end{observation}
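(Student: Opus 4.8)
The plan is to fix a vector ${\bf u}\in U$ with all edges incident to $v_0$ clear, fix an index $i\in\{2,\dots,n-1\}$, and analyze the set $E_{v_1v_i}$ of edges $f$ with $\{v_1v_i,f\}$ nonzero in ${\bf u}$. Since ${\bf u}$ only records crossing parities of independent pairs, every $f\in E_{v_1v_i}$ is vertex-disjoint from $v_1v_i$; moreover $f$ cannot be incident to $v_0$, since all such edges are clear. Hence $E_{v_1v_i}$ consists of edges on the vertex set $\{v_2,\dots,v_{n-1}\}\setminus\{v_i\}$. It remains to show this edge set is a complete bipartite graph. I would do this by showing the complement within that vertex set is transitively closed as a ``non-edge'' relation, or more directly by a parity argument on small subconfigurations.

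The key step is a local analysis using the $K_5$-property and the $2K_3$-property. Pick any three distinct vertices $v_j,v_k,v_\ell\in\{v_2,\dots,v_{n-1}\}\setminus\{v_i\}$. Consider the copy of $K_5$ on $\{v_0,v_1,v_i,v_j,v_k\}$: by the $K_5$-property the number of independent odd pairs among its $\binom{5}{2}=10$ edges, equivalently its $15$ independent pairs, is even; but since all three edges at $v_0$ inside this $K_5$ are clear, the only independent pairs that can contribute are $\{v_1v_i,v_jv_k\}$, $\{v_1v_j,v_iv_k\}$, $\{v_1v_k,v_iv_j\}$ — wait, I must also account for pairs not involving $v_1v_i$. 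I would instead apply the $2K_3$-property to the pair of triangles $v_1v_iv_j$ and $v_0 v_k v_\ell$ (disjoint!) to isolate single coordinates, and the $K_5$-property on $\{v_0,v_1,v_i,v_j,v_k\}$ to get the relation: $v_jv_k\in E_{v_1v_i}$ if and only if exactly one (or all three, i.e.\ an odd number) of a prescribed triple of coordinates is nonzero, all of the other coordinates being forced to vanish by the $v_0$-clearing. Working this out, the upshot is that on $\{v_2,\dots,v_{n-1}\}\setminus\{v_i\}$ the relation ``$v_jv_k\notin E_{v_1v_i}$'' behaves like equality of a $\mathbb{Z}_2$-labelling of the vertices: each vertex $v_j$ gets a label $\lambda_j\in\mathbb{Z}_2$, and $v_jv_k\in E_{v_1v_i}$ iff $\lambda_j\neq\lambda_k$. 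That is exactly the statement that $E_{v_1v_i}$ is a complete bipartite graph (with the two sides being the two label classes, one possibly empty).

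I expect the main obstacle to be bookkeeping: correctly enumerating which independent pairs inside each chosen $K_5$ and each chosen $2K_3$ have forcibly-zero coordinates (because they involve an edge at $v_0$, or because $\{v_1v_i,\cdot\}$ is the only ``live'' coordinate family of interest) versus which are genuinely free, and then extracting the clean linear relation among the $\{v_1v_i,v_jv_k\}$ coordinates. Once the relation ``$v_jv_k\in E_{v_1v_i}\iff \lambda_j\neq\lambda_k$'' is established for a fixed base labelling — say $\lambda_j=1$ iff $v_jv_?\in E_{v_1v_i}$ for a fixed reference vertex — transitivity of the complement is immediate from the $K_5$-relation applied to triples, and the conclusion follows, including the degenerate case where $E_{v_1v_i}=\emptyset$ (all labels equal).
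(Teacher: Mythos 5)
Your endgame is the right one --- show that every triangle on $\{v_2,\dots,v_{n-1}\}\setminus\{v_i\}$ contains an even number of edges of $E_{v_1v_i}$, so that the complement is transitively closed, is a union of cliques, and has at most two cliques, which is exactly ``complete bipartite or empty''. The gap is that this key triangle-parity relation is never actually derived, and the specific instances of the two properties you name do not give it. The needed relation is that for all distinct $v_j,v_k,v_\ell\in\{v_2,\dots,v_{n-1}\}\setminus\{v_i\}$ the sum of the three coordinates $\{v_1v_i,v_jv_k\}$, $\{v_1v_i,v_jv_\ell\}$, $\{v_1v_i,v_kv_\ell\}$ of ${\bf u}$ vanishes. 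Your $2K_3$ applied to the triangles $v_1v_iv_j$ and $v_0v_kv_\ell$ yields $\{v_1v_i,v_kv_\ell\}+\{v_1v_j,v_kv_\ell\}+\{v_iv_j,v_kv_\ell\}=0$, and your $K_5$ on $\{v_0,v_1,v_i,v_j,v_k\}$ yields $\{v_1v_i,v_jv_k\}+\{v_1v_j,v_iv_k\}+\{v_1v_k,v_iv_j\}=0$; in both, the last two coordinates lie outside the family $\{v_1v_i,\cdot\}$ and are not forced to vanish by the $v_0$-clearing, so neither relation ``isolates single coordinates'', and in particular your claim that transitivity of the complement ``is immediate from the $K_5$-relation applied to triples'' does not hold as stated.

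The clean choice --- and this is essentially the paper's entire proof --- is to pair the triangle $v_0v_1v_i$ itself with the triangle $v_jv_kv_\ell$: two of the three edges of the first triangle are incident to $v_0$ and hence clear, so among the nine independent pairs of this $2K_3$ the only possibly nonzero coordinates are exactly the three of the form $\{v_1v_i,\cdot\}$, and the $2K_3$-property gives the triangle-parity relation in one line. Your relations can in fact be salvaged, but only by summing the three $2K_3$ identities for the pairs $(v_1v_iv_j,v_0v_kv_\ell)$, $(v_1v_iv_k,v_0v_jv_\ell)$, $(v_1v_iv_\ell,v_0v_jv_k)$ and cancelling the foreign terms via the $K_5$-relations on $\{v_0,v_1,v_j,v_k,v_\ell\}$ and $\{v_0,v_i,v_j,v_k,v_\ell\}$; this works but is considerably more roundabout, and none of it is carried out in your sketch, which is precisely where the proof lives. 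Once the parity relation is established, your $\mathbb{Z}_2$-labelling conclusion (including the empty case) is fine.
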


\begin{proof}
The $2K_3$-property applied to the triangle $v_0v_1v_i$ and every triangle induced by $\{v_2,\allowbreak v_3, \dots,\allowbreak v_{n-1}\} \setminus \{v_i\}$ implies that $E_{v_1v_i}$ has an even number of edges in every triangle induced by $\{v_2,\allowbreak v_3, \dots,\allowbreak v_{n-1}\} \setminus \{v_i\}$. This implies that each connected component of the complement $E_{v_1v_i}^c$ of $E_{v_1v_i}$ is a complete graph, and that the number of components of $E_{v_1v_i}^c$ is at most $2$. This is equivalent to $E_{v_1v_i}$ being complete bipartite or empty.
\end{proof}

Let $i\in\{2,3,\dots,n-1\}$ and suppose that the vector ${\bf u}_{n+i-3}$ has been computed. Let $E_{v_1v_i}$ be the set of edges $f$ such that the coordinate $\{v_1v_i,f\}$ is nonzero in~${\bf u}_{n+i-3}$. If $E_{v_1v_i}=\emptyset$, we set ${\bf u}_{n+i-2}={\bf u}_{n+i-3}$. Otherwise $E_{v_1v_i}$ is complete bipartite by Observation~\ref{obs_complete_bip}. Let $(A,B)$ be the bipartition of $\{v_2, v_3, \dots, v_{n-1}\} \setminus \{v_i\}$ that induces $E_{v_1v_i}$. 

\begin{observation}
All the vertices $\{v_2, v_3, \dots, v_{i-1}\}$ lie in the same part of the bipartition $(A,B)$.
\end{observation}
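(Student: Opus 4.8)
The plan is to deduce the statement from the $K_5$-property that ${\bf u}_{n+i-3}$ enjoys as an element of $U$. First I would dispose of the trivial range $i\le 3$, in which $\{v_2,\dots,v_{i-1}\}$ has at most one element; so assume $i\ge 4$, hence $n\ge 5$. Since the bipartition $(A,B)$ has only two parts, it is enough to show that any two distinct vertices $v_j,v_k$ with $2\le j<k\le i-1$ lie in a common part, which is the same as saying that the edge $v_jv_k$ is not in $E_{v_1v_i}$, i.e.\ that the coordinate of ${\bf u}_{n+i-3}$ indexed by the independent pair $\{v_1v_i,v_jv_k\}$ is $0$.

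The key step is to apply the $K_5$-property to the five vertices $v_0,v_1,v_i,v_j,v_k$. Of the $15$ pairs of independent edges spanned by these five vertices, the $12$ that contain an edge incident with $v_0$ have coordinate $0$: by the inductive invariant maintained throughout the clearing process, every edge incident with $v_0$ is clear with respect to ${\bf u}_{n+i-3}$, since all vectors added after the first $n-1$ steps are of the form ${\bf y}_{i',j'}$, whose nonzero coordinates involve only vertices among $v_1,\dots,v_{n-1}$. The three remaining independent pairs are $\{v_1v_i,v_jv_k\}$, $\{v_1v_j,v_iv_k\}$ and $\{v_1v_k,v_iv_j\}$. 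I would then use that, since $j,k\le i-1$, the edges $v_1v_j$ and $v_1v_k$ have already been cleared in earlier steps and remain clear: the generators ${\bf y}_{i',j'}$ used afterwards to clear $v_1v_{i'}$ with $i'<i$ (with $j'>i'$) touch neither the edges at $v_0$ nor the edges $v_1v_l$ with $l<i'$. Hence the last two of the three pairs also have coordinate $0$, and the $K_5$-property forces $\{v_1v_i,v_jv_k\}$ to have coordinate $0$ as well. Letting $j,k$ range over $\{2,\dots,i-1\}$ then gives the statement, ``lying in the same part'' being an equivalence relation because there are only two parts.

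I expect the main obstacle to be not the parity computation, which is routine, but making the bookkeeping airtight: one must pin down exactly which coordinates of ${\bf u}_{n+i-3}$ vanish, namely those of all edges at $v_0$ and of all edges $v_1v_2,\dots,v_1v_{i-1}$. This is precisely the inductive invariant that the whole clearing argument is built around, so once it is stated explicitly (and once one checks that the generators ${\bf y}_{i',j'}$ used in the intervening steps do not reintroduce nonzero entries on these edges), the observation follows at once. The degenerate case $E_{v_1v_i}=\emptyset$ needs no argument, and when $E_{v_1v_i}\ne\emptyset$ both parts of $(A,B)$ are automatically nonempty, so no boundary case causes trouble.
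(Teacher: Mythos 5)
Your proposal is correct and follows essentially the same route as the paper: both apply the $K_5$-property to the five vertices $v_0,v_1,v_i,v_j,v_k$ and use the invariant that all edges at $v_0$ and the edges $v_1v_2,\dots,v_1v_{i-1}$ are already clear with respect to ${\bf u}_{n+i-3}$, so that $\{v_1v_i,v_jv_k\}$ is the only possibly nonzero coordinate in that $K_5$ and hence must vanish. The paper states this as a direct contradiction while you argue the parity directly, but the substance is identical.
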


\begin{proof}
Suppose for contradiction that $v_j\in A$ and $v_k\in B$, for some $j,k\in \{2,3,\dots,i-1\}$. In the complete subgraph induced by the vertices $v_0,v_1,v_i,v_j,v_k$, only the edges $v_1v_i$ and $v_jv_k$ are not clear with respect to ${\bf u}_{n+i-3}$, and the pair $\{v_1v_i,v_jv_k\}$ is the only nonzero coordinate in ${\bf u}_{n+i-3}$ induced by the vertices $v_0,v_1,v_i,v_j,v_k$. This contradicts the $K_5$-property.
\end{proof}

Assume without loss of generality that $\{v_2, v_3, \dots, v_{i-1}\}\subseteq A$. We set
\[
{\bf u}_{n+i-2}={\bf u}_{n+i-3}+\sum_{v_j\in B} {\bf y}_{i,j}.
\]
All edges incident with $v_0$ are clear in ${\bf u}_{n+i-2}$, since this is true for all the summands. By adding $\sum_{v_j\in B} {\bf y}_{i,j}$, we have cleared the edge $v_1v_i$, while keeping the coordinates $\{v_1v_j,f\}$ unchanged for $j\in A$. Thus, all the edges $v_1v_j$ with $2\le j\le i$ are clear with respect to ${\bf u}_{n+i-2}$. Consequently, all edges incident with $v_0$ or $v_1$ are clear with respect to~${\bf u}_{2n-3}$. The following observation finishes the proof.

\begin{observation} 
We have ${\bf u}_{2n-3}={\bf 0}$. 
\end{observation}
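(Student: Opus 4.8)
The plan is to show that after performing all $2n-3$ clearing steps, the vector $\mathbf{u}_{2n-3}$ has no nonzero coordinate at all. By construction, every coordinate of the form $\{v_0v_i, f\}$ and every coordinate of the form $\{v_1v_i, f\}$ is zero in $\mathbf{u}_{2n-3}$: the first $n-1$ steps cleared all edges incident with $v_0$, and each of the steps $n, n+1, \dots, 2n-3$ cleared one further edge $v_1v_i$ while preserving clearness of all edges incident with $v_0$ and of the previously cleared edges $v_1v_j$. So the only coordinates that could still be nonzero are pairs $\{v_av_b, v_cv_d\}$ with $2\le a,b,c,d\le n-1$ and $a,b,c,d$ distinct, i.e.\ pairs of independent edges in the complete graph on $\{v_2,\dots,v_{n-1}\}$.

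The key point is that $\mathbf{u}_{2n-3}$ still lies in $U$, since $U$ is a subspace and every vector we added to $\mathbf{u}_0$ lies in $W\le U$. Now I would invoke the $2K_3$-property of $\mathbf{u}_{2n-3}$ together with the fact that the edges $v_0v_1$, $v_0v_i$, $v_1v_i$ are all clear. Fix any three distinct indices $i,j,k\in\{2,\dots,n-1\}$. Apply the $2K_3$-property to the disjoint triangles $v_0v_1v_i$ (all of whose edges are clear, hence contributing no odd-crossing pair) and $v_jv_kv_l$ for a fourth index $l$; more directly, apply it to the pair of triangles $v_0v_1v_i$ and $v_jv_kv_l$ inside the five- or six-vertex subgraph, concluding that the triangle on $\{v_j,v_k,v_l\}\subseteq\{v_2,\dots,v_{n-1}\}$ must contain an even number of nonzero coordinates. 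This forces, exactly as in the proof of Observation~\ref{obs_complete_bip} but now with $v_1v_i$ itself clear, that for each fixed edge $v_iv_j$ with $i,j\ge 2$ the set of edges forming a nonzero coordinate with it is again complete bipartite or empty on the remaining vertices of $\{v_2,\dots,v_{n-1}\}$; but combined with the $K_5$-property one checks the bipartition must be trivial, so the set is empty.

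More cleanly, I would argue as follows. Consider any potentially nonzero coordinate $\{v_iv_j, v_kv_l\}$ with all four indices in $\{2,\dots,n-1\}$. Look at the complete subgraph on the six vertices $v_0, v_1, v_i, v_j, v_k, v_l$. All nine edges incident with $v_0$ or $v_1$ are clear with respect to $\mathbf{u}_{2n-3}$, so the only coordinates of $\mathbf{u}_{2n-3}$ supported on this six-vertex subgraph are among the three pairs $\{v_iv_j,v_kv_l\}$, $\{v_iv_k,v_jv_l\}$, $\{v_iv_l,v_jv_k\}$ of the three perfect matchings of $K_4$ on $\{v_i,v_j,v_k,v_l\}$. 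The $K_5$-property applied to the five-vertex subgraphs obtained by deleting one of $v_i,v_j,v_k,v_l$ (each of which then has at most one nonzero coordinate among its matching pairs, since the $v_0$- and $v_1$-edges are clear) forces that single coordinate to be zero; hence $\{v_iv_j,v_kv_l\}=0$. As the pair was arbitrary, $\mathbf{u}_{2n-3}=\mathbf{0}$.

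The main obstacle is making precise why no nonzero coordinate can survive among the ``interior'' pairs on $\{v_2,\dots,v_{n-1}\}$: one must be careful that after deleting a vertex from the six-vertex picture, the $K_5$-property really does kill the lone surviving matching coordinate, which uses the fact that a single independent odd-crossing pair cannot occur alone in a drawing of $K_5$ (the $K_5$-property says the count is odd, so it cannot be $1$ only if there is a second pair somewhere — here one should instead note that with all $v_0,v_1$-edges clear the count among the three matching pairs of $K_4$ must be odd, and two of the three being forced zero by sub-$K_5$'s would leave the count equal to one of $\{0,1\}$, contradiction unless all are zero). I would spell this counting argument out carefully, as it is the crux; everything else is bookkeeping already done in the preceding observations.
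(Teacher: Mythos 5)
Your reduction is fine up to the last step: you correctly observe that after the $2n-3$ clearing steps every coordinate involving an edge incident with $v_0$ or $v_1$ is zero, and that ${\bf u}_{2n-3}$ still lies in $U$ because only vectors of $W\le U$ were added. The gap is in how you kill the remaining ``interior'' coordinates. First, note that the $K_5$-property of vectors in $U$ is \emph{no odd $K_5$}, i.e.\ every $5$-subset carries an \emph{even} number of nonzero coordinates (it is ${\bf x}-{\bf v}$, the difference of two everywhere-odd vectors, that lies in $U$); your closing paragraph asserts the opposite parity. Second, and more importantly, your five-vertex subgraphs give nothing: every matching pair $\{v_iv_j,v_kv_l\}$, $\{v_iv_k,v_jv_l\}$, $\{v_iv_l,v_jv_k\}$ uses all four of $v_i,v_j,v_k,v_l$, so deleting any one of these four vertices removes \emph{all} three pairs, not ``at most one''; the resulting $K_5$'s on $\{v_0,v_1\}$ plus three interior vertices contain only cleared coordinates and are vacuously even. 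The only informative $K_5$'s are those obtained by deleting $v_0$ or $v_1$, and they merely say that the three matching coordinates sum to zero modulo $2$, which does not force them to vanish individually. This is not a repairable bookkeeping slip: for $n=6$ the vector supported on exactly two of the three matching pairs of $\{v_2,v_3,v_4,v_5\}$ satisfies every $K_5$-constraint (all counts are $0$ or $2$), so the $K_5$-property alone cannot finish the proof. Your earlier appeal to the $2K_3$-property with the triangles $v_0v_1v_i$ and $v_jv_kv_l$ is also vacuous, since every cross pair between those two triangles involves an edge incident with $v_0$ or $v_1$ and is already clear.

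What is actually needed—and what the paper does—is to apply the $2K_3$-property to a pair of vertex-disjoint triangles with one triangle through $v_0$ and the other through $v_1$: for distinct $i,j,k,l\in\{2,\dots,n-1\}$, take $v_0v_iv_k$ and $v_1v_jv_l$. Among the nine cross pairs, all except $\{v_iv_k,v_jv_l\}$ involve a cleared edge, so the even-parity condition forces that single coordinate to be zero; since $i,j,k,l$ were arbitrary, ${\bf u}_{2n-3}={\bf 0}$. Splitting $v_0$ and $v_1$ between the two triangles is exactly the device that isolates one matching coordinate at a time, which your configurations never achieve.
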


\begin{proof}
Let $i,j,k,l\in \{2,3,\dots,n-1\}$ be distinct integers. Since all edges incident with $v_0$ or $v_1$ are clear with respect to ${\bf u}_{2n-3}$, the $2K_3$-property applied to the two triangles $v_0v_iv_k$ and $v_1v_jv_l$ implies that the coordinate $\{v_iv_k,v_jv_l\}$ is zero in~${\bf u}_{2n-3}$.
\end{proof}

\subsection{Concluding remarks}

\paragraph*{Minimality of the characterization.}
Here we show that none of the two conditions in Theorem~\ref{veta_Z2_charakterizace} can be omitted. This is equivalent to the fact that for $n$ large enough, there is a vector in $\mathbb{Z}_2^M$ with the $K_5$-property that does not have the $2K_3$-property, and another vector with the $2K_3$-property that does not have the $K_5$-property.

For $n\ge 5$, the AT-graph $(K_n,\emptyset)$ has no odd $2K_3$ but every subgraph isomorphic to $K_5$ is even.

For $n\ge 6$, the AT-graph $(K_n,\mathcal{X})$ where $\mathcal{X}$ is the set of all pairs of independent edges, has no even $K_5$ but every subgraph isomorphic to $2K_3$ is odd.

\paragraph*{Drawings of $K_6$.}
We conclude with the following interpretation of the characterization of independently $\mathbb{Z}_2$-realizable complete AT-graphs with six vertices. In the proof of Theorem~\ref{veta_Z2_charakterizace} we have shown that by edge-vertex switches, we can transform any drawing of $K_6$ to a drawing where all the edges incident to a chosen vertex $v_0$ cross every independent edge an even number of times. The drawings with this property can be represented as subgraphs of the Kneser graph $KG_{5,2}$, which is isomorphic to the Petersen graph. The $2K_3$-property now corresponds to the cycle space of the Petersen graph, and together with the $K_5$-property, they correspond to the even cycle space of the Petersen graph; that is, Eulerian subgraphs with even number of edges. The even cycle space of the Petersen graph has dimension $5$, and contains $10$ subgraphs isomorphic to $C_6$, $15$ subgraphs isomorphic to $C_8$, and $6$ subgraphs isomorphic to $2C_5$. However, the vectors characterizing the drawings of $K_6$ satisfy the $2K_3$ property but satisfy the ``opposite'' of the $K_5$-property; they correspond to the $32$ odd cycles in the Petersen graph, $12$ of them isomorphic to $C_5$ and the remaining $20$ isomorphic to $C_9$.

For $n=6$, the whole space $U$ of the vectors with the $K_5$-property and the $2K_3$-property has dimension $35$, since each of the $30$ edge-vertex switches $(e,v)$ with $v\neq v_0$ and $v_0\notin e$ changes the parity of the number of crossings for exactly one pair of edges of the type $\{v_0v_i,f\}$.

\section*{Acknowledgements}
I thank Martin Balko for his comments on an early version of the manuscript. 
I also thank the reviewers for their corrections and suggestions for improving the presentation. 

\end{document}